\newtheorem{thm}{Theorem}[section]
\newtheorem{lem}[thm]{Lemma}
\theoremstyle{definition}
\newtheorem{defn}{Definition}[section]
\newtheorem{rem}{Remark}[section]
\numberwithin{equation}{section}
\begin{document}
\title[]
      {Global existence and boundedness of weak solutions to a chemotaxis-stokes system with rotational flux term}%
\author[Li]{Li Feng}%
\address{Department of Mathematics, Southeast University, Nanjing 210096, P. R. China\\
}
\email{lilifeng1234567@163.com}


\author[Li]{Li Yuxiang}%
\address{Department of Mathematics, Southeast University, Nanjing 210096, P. R. China}
\email{lieyx@seu.edu.cn}
\thanks{Supported in part by National Natural Science Foundation of China (No. 11171063, No. 11671079, No. 11701290), National Natural Science Foundation of China under Grant (No. 11601127), and National Natural Science Foundation of Jiangsu Provience (No. BK20170896).}

\subjclass[2010]{35K92, 35Q35, 35Q92, 92C17.}
\keywords{chemotaxis, porous media diffusion, tensor-valued sensitivity, global existence.}


\begin{abstract}

In this paper, the three-dimensional chemotaxis-stokes system
\begin{eqnarray*}
  \left\{\begin{array}{lll}
     \medskip
    n_{t}+u\cdot\nabla n=\Delta n^m-\nabla\cdot(n S(x,n,c)\cdot\nabla c),&x\in\Omega,\ \ t>0,\\
     \medskip
    c_t+u\cdot\nabla c=\Delta c-nf(c),&x\in\Omega,\ \ t>0,\\
     \medskip
     u_t+\nabla P=\Delta u +n\nabla\phi,&x\in\Omega,\ \ t>0,\\
     \nabla\cdot u=0, &x\in\Omega,\ \ t>0,,
  \end{array}\right.
\end{eqnarray*}
posed in a bounded domain $\Omega\subset\mathbb{R}^3$ with smooth boundary is considered under the no-flux boundary condition for $n$, $c$ and the Dirichlect boundary condition for $u$ under the assumption that the Frobenius norm of the tensor-valued chemotactic sensitivity $S(x,n,c)$ satisfies $S(x,n,c)<n^{l-2}\widetilde{S}(c)$ with $l>2$ for some non-decreasing function $\widetilde{S}\in C^{2}((0,\infty))$. In present work, it is shown that the weak solution is global in time and bounded while $m>m^\star(l)$, where
\begin{eqnarray*}
m^\star(l)=
\left\{\begin{array}{lll}
  \medskip
  l-\frac{5}{6},\ &\mathrm{if}\ \frac{31}{12}\geq l>2,\\
  \medskip
  \frac{7}{5}l-\frac{28}{15},\ &\mathrm{if}\ l>\frac{31}{12}.
\end{array}\right.
\end{eqnarray*}
\end{abstract}
\maketitle
\section{Introduction}

This paper deals with the global existence of weak solutions to the chemotaxis-stokes system with rotational flux
\begin{eqnarray}\label{the main system}
  \left\{\begin{array}{lll}
     \medskip
    n_{t}+u\cdot\nabla n=\Delta n^{m}-\nabla\cdot(n S(x,n,c)\cdot\nabla c),&x\in\Omega,\ \ t>0,\\
     \medskip
    c_t+u\cdot\nabla c=\Delta c-nf(c),&x\in\Omega,\ \ t>0,\\
     \medskip
     u_t+\nabla P=\Delta u +n\nabla\phi,&x\in\Omega,\ \ t>0,\\
    \nabla\cdot u=0, &x\in\Omega,\ \ t>0,
  \end{array}\right.
\end{eqnarray}
 in a bounded convex domain $\Omega\subset\mathbb{R}^{3}$. Here the chemotaxis sensitivity $S(x,n,c)$ is a matrix-valued function in $\mathbb{R}^{3\times 3}$ satisfying $|S(x,n,c)|\leq n^{l-2}\widetilde S(c)$ with $l>2$ and nondecreasing function $\widetilde S$ .

As described in \cite{Dombrowski2004Self}, the model was arisen to describe the behavior of swimming aerobic bacteria in situations where besides their chemotactically biased movement toward oxygen as their nutrient, a buoyancy-driven effect of bacterial mass on the fluid motion is not negligible. In the system (\ref{the main system}), density denoted by $n=n(x,t)$, affects the fluid motion, as represented by its velocity field $u=u(x,t)$ and the associated pressure $P=P(x,t)$, through buoyant forces. Moreover, it is assumed that both cells and oxygen, the latter with concentration $c=c(x,t)$, are transported by the fluid and diffuse randomly, that the cells partially direct their movement toward increasing concentration of oxygen, that the latter is consumed by the cells.
\vskip 2mm
\textbf{Keller-Segel model}. In 1970s, Keller and Segel in \cite{Keller1970Initiation} proposed a mathematical system
\begin{eqnarray}\label{classical keller segel model}
  \left\{\begin{array}{lll}
     \medskip
    n_{t}=\nabla\cdot(D(n)\nabla n)-\nabla\cdot(S(n)\nabla c),&x\in\Omega,\ \ t>0,\\
    c_t=\Delta c-c+ n,&x\in\Omega,\ \ t>0,
  \end{array}\right.
\end{eqnarray}
to indicate the collective motion of cells, usually bacteria or amoebae, that are attracted by a chemical substance and are able to emit it. For a general introduction to chemotaxis, see \cite{MR1995108}. In this model, $n$ is the density of a biological organism and $c$ is the concentration of a chemical substance produced by the  biological organism. The mathematical properties of (\ref{classical keller segel model}) have been extensively studied by many authors. The most peculiar features of (\ref{classical keller segel model}) are the existence, blow-up and asymptotic behavior to the non-radial or radial solutions under some suitable initial data $n_{0}(x)$. For instance, Ci$\acute{e}$slak and Winkler in \cite{MR3595313} have established the solution is global bounded in in a two-dimensional bounded domain under the assumption that $D(s)$ is decaying exponentially and $\frac{S(s)}{D(s)}\leq K s^\alpha$ is fulfilled with $\alpha\in(0,1)$. It has also been proven that there exist global bounded solutions when $S(n)\leq C(1+n)^{-\alpha}$ for all $s\geq 1$ and some $\alpha>1-\frac{2}{N}$ by Horstmannn and Winkler in \cite{MR2146345}, whereas the solutions may blow up in the radial case with $S(n)>cn^{-\alpha}$ for all $\alpha<1-\frac{2}{N}$ $(N\geq 2)$. Related models with prevention of overcrowding, see \cite{MR1826309}, volume effects \cite{MR2247456,MR2130723,MR2218162}, with logistic source \cite{MR3412307} or involving more than one chemo-attractant have also been studied in \cite{MR3304707,MR3247294,MR3198865}. Recently, the focus of the investigation to  chemotaxis model has extended to the chemotaxis-(Navier-)Stokes system.
\vskip 2mm
\textbf{Chemotaxis coupled with fluid}. Chemotaxis is an oriented immigration of living cells or bacteria under the effects of chemical gradients. Some aerobic bacteria such as Bacillus subtilis often live in thin fluid layer near solid-air-water contact, in which the swimming bacteria move towards higher concentration of oxygen according to mechanism of chemotaxis and meanwhile the movement of fluid is influenced by the gravitation force generated by itself. Thus, models with cell-fluid interaction become more complicated than fluid-free case as in (\ref{classical keller segel model}) since it does not only account for chemotaxis and diffusion, but also includes viscous fluid dynamics. Considering that the motion of the fluids is determined by the incompressible (Navier-)Stokes equations, the corresponding chemotaxis-fluid model was proposed as follows:
\begin{align}\label{chemotaxis with fluid}
\left\{\begin{array}{lll}
     \medskip
    n_{t}+u\cdot\nabla n=\nabla\cdot(D(n)\nabla n)-\nabla\cdot(S(x,n,c)\nabla c) ,&x\in\Omega,\ \ t>0,\\
     \medskip
    c_t+u\cdot\nabla c=\Delta c+f(n,c),&x\in\Omega,\ \ t>0,\\
     \medskip
     u_t+k(u\cdot\nabla)u+\nabla P=\Delta u +n\nabla\phi,&x\in\Omega,\ \ t>0,\\
    \nabla\cdot u=0, &x\in\Omega,\ \ t>0,
  \end{array}\right.
\end{align}
where $n$, $c$, $u$, $P$ and $\phi$ are defined as before, $k\in\mathbb{R}$ denotes the strength of nonlinear fluid convection. Moreover,
\begin{align}\label{f1}
f(n,c)=-c+n
\end{align}
when the production of cells or bacteria dominates the chemoattractant. Otherwise, if the signal is consumed, rather than produced, by the cells,  the function $f(n,c)$ is defined by
\begin{align}\label{f2}
f(n,c)=-ng(c)
\end{align}
for some known $g(c)$ denoting the consumption rate of the oxygen by the cells. From a viewpoint of mathematical analysis, this system couples the known obstacles from the theory of fluid equations to the typical  difficulties arising in the study of chemotaxis system. Despite this challenge, the well-posedness to the system (\ref{chemotaxis with fluid}) with (\ref{f1}) or (\ref{f2}) have been addressed under  various assumptions on the scalar function $S$, $f$ and $\phi$. For instance, Liu and Wang \cite{MR3566484} have proved that the solution to the system (\ref{chemotaxis with fluid}) is global in time and bounded for $k\neq 0$ and $N=3$ or $k=0$ and $N\in\{2,3\}$ under assumptions that (\ref{f1}) and
$$S(x,n,c)=\frac{\xi_0}{(1+\mu c)^2}$$
are satisfied.
When the $f(n,c)$ is defined by (\ref{f2}), Winkler \cite{MR3542616} has established the global existence of weak solution in a three-dimensional domain when $S(x,n,c)=D(n)\equiv 1$ and $k\neq 0$. Based on the method applied in the paper mentioned just before, Zhang and Li \cite{MR3369260} further shows the same result for $m>\frac{2}{3}$ when considering the porous media diffusion $D(n)=n^{m-1}$. If the $k=0$, the solutions to the system (\ref{chemotaxis with fluid}) coupled with (\ref{f2}) and porous media diffusion are bounded in time when $m>\frac{9}{8}$ and $S(x,n,c)\equiv 1$ are fulfilled. For more literatures related to this model, we can refer to \cite{MR2679718,MR3190352,MR2838394,MR2871341,MR3011296,MR3144997} and the reference therein.
\vskip 2mm
\textbf{Chemotaxis with rotational flux}.  Experiments find that the oriented migration of bacteria or cells may not be parallel to the gradient of the chemical substance, but may rather involve rotational flux components. This requires the sensitivity function $S$ in (\ref{chemotaxis with fluid}) to be a matrix possibly containing nontrivial off-diagonal entries (see \cite{MR2505083} and \cite{MR3294964} for detailed model derivation) such as appearing e.g. in the prototype
\begin{align}\label{trivial S}
S=\alpha\left(
\begin{array}{cc}
1 & 0\\
0 & 1\\
\end{array}
\right)
+\beta\left(
\begin{array}{cc}
0 & -1\\
1 & 0 \\
\end{array}
\right),\ \ \ \alpha>0,\ \ \beta\in\mathbb{R},
\end{align}
in the two-dimensional case. This generalization results in considerable mathematical difficulties due to the fact that chemotaxis systems with such rotational fluxes lose some energy structure, which has served as a key to the analysis for scalar-valued $S$. In \cite{MR3383312}, Winkler investigate the classical Keller-Segel model with tensor-value sensitivity
\begin{align*}
\left\{\begin{array}{lll}
     \medskip
    n_{t}=\Delta n-\nabla\cdot(nS(x,n,c)\cdot\nabla c),&x\in\Omega,\ \ t>0,\\
    c_t=\Delta c-nc,&x\in\Omega,\ \ t>0,
  \end{array}\right.
\end{align*}
and obtained a generalized solution under the assumption that $S(x,n,c)\leq CS_0(c)$ for some nondecreasing function $S_0$ in $[0,\infty)$. Thereafter, this kind of chemotaxis sensitivity is also applied to chemotaxis model coupled with fluid. For instance, under the assumption that $S(x,n,c)\leq C(1+n)^{-\alpha}$ and $k=0$ are satisfied, Wang and Xiang \cite{MR3401606,MR3542964} established the existence of global bounded  solutions to system (\ref{chemotaxis with fluid}) with (\ref{f1}) for arbitrary large initial data in a 2D and 3D bounded domain respectively when $D(n)\equiv 1$, Peng and Xiang \cite{MR3654908} further shows the same results with $m+2\alpha>2$ and $m>\frac{3}{4}$ when the porous media type diffusion $D(n)=mn^{m-1}$ is considered in a 3D bounded domain.

If the signal is consumed, the global classical solution and its large time behavior  under smallness assumption on $\|c_0\|_{L^\infty(\Omega)} $  in a bounded domain $\Omega\in\mathbb{R}^N$ $(N=\{2,3\})$ are also been obtained by Cao and Lankeit in \cite{MR3562314,MR3531759} when $k=0$ and $|S(x,n,c)|\leq CS_0(c)$ are fulfilled with some proper $S_0$, Winker \cite{MR3426095} establish the solutions to the system with porous media diffusion are also global bounded and converge to the integral equilibrium when $m>\frac{7}{6}$. For more related works to the system (\ref{chemotaxis with fluid}) , \cite{MR3302296,MR3426095} can be referred to.
\vskip 2mm
From the introduction to the chemotaxis system with tensor-valued sensitivity above, we can infer that the existing results are only focused on the case $S(x,n,c)\leq C$ as $n\rightarrow\infty$. Motivated by the work \cite{MR2235324}, it is meaningful for us to investigate the system (\ref{chemotaxis with fluid}) with nonlinear diffusion $D(n)=mn^{m-1}$ and tensor-valued chemotactic sensitivity $S(n,c)\leq Cn^{l-2}\widetilde{S}(c)$ for $l>2$.
In order to formulate our results, we specify the precise mathematical setting: we shall subsequently consider the system (\ref{the main system}) under the boundary conditions
\begin{align}\label{boundary conditions}
\nabla n\cdot\nu=\nabla c\cdot\nu=0,\ \ \ u=0,\ \ \ x\in\partial\Omega,\ t>0
\end{align}
and the initial conditions
\begin{align}\label{initial data}
n(x,0)=n_0(x),\ \ \ c(x,0)=c_0(x)\ \ \ u(x,0)=u_0(x),\ \ \ x\in\Omega
\end{align}
in a bounded convex domain $\Omega\subset\mathbb{R}^3$ with smooth boundary under the assumption that
\begin{eqnarray}\label{the initial data conditions}
\left\{\begin{array}{lll}
   n_{0}\in L^{\infty}(\Omega)\ \mathrm{is\ nonnegative},\\
   c_{0}\in W^{1,\infty}(\Omega),\ \mathrm{is\ nonnegative},\\
   u_{0}\in D(A^{\alpha}), \quad \alpha\in(\frac{3}{4}, 1).
\end{array}\right.
\end{eqnarray}
Moreover, we let
\begin{align}\label{condition 1 of S}
&|S(x,n,c)|:=\max_{i,j\in\{1,2\}}\{s_{ij}(x,n,c)\},\ \mathrm{with}\ s_{ij}\in C^{2}(\overline{\Omega}\times[0,\infty)\times [0,\infty)),
\end{align}
and
\begin{align}\label{condition 2 of S}
&|S(x,n,c)|:\leq Cn^{l-2}\widetilde{S}(c)\ {\rm with}\ l>2
\end{align}
for all $(x,n,c)\in\overline\Omega\times [0,\infty)\times [0,\infty)$, where $\widetilde S$ is a nondecreasing function on $[0,\infty)$.

As for the time independent gravitational potential $\phi$  and $f$ in (\ref{the main system}), we require that
\begin{align}
&\phi\in W^{1,\infty}(\overline\Omega)\label{phi condition},\\
&f\in C^{1}([0,\infty))\ {\rm is\ nonnegative}.\label{f condition}
\end{align}¡¡
Before stating our main result, let us briefly introduce the definition of weak solution to the system (\ref{the main system})
\begin{defn}\label{definition:definition of weak solution}
We say that $(n,c,u,P)$ is a global weak solution of (\ref{the main system}) associated to initial data $(n_0,c_0,u_0)$ if
\begin{align*}
&n\in L^{1}_{\mathrm{loc}}([0,\infty)\times\overline{\Omega}),\\
&c\in L^{\infty}_{\mathrm{loc}}(\overline{\Omega}\times[0,\infty))\cap L^{1}_{\mathrm{loc}}([0,\infty),W^{1,1}(\Omega)),\\
&u\in \left(L^{1}_{\mathrm{loc}}([0,\infty);W^{1,1}(\Omega))\right)^{3}
\end{align*}
such that $n\geq 0$ and $c\geq 0$ $\mathrm{a.e\ in}$ $\Omega\times [0,\infty)$,
\begin{align*}
&nc\in L^{1}_{\mathrm{loc}}([0,\infty)\times\Omega),\ \ \ u\otimes u\in \left(L^{1}_{\mathrm{loc}}([0,\infty)\times\Omega)\right)^{3\times 3},\ \ \ \mathrm{and}\\
&nS(x,n,c)\nabla c, nu, \mathrm{and}\ cu\ \mathrm{belong\ to}\ (L^{1}_{\mathrm{loc}}([0,\infty)\times\Omega))^{3},
\end{align*}
that $\nabla\cdot u=0$ a.e in $\Omega\times (0,\infty)$, and that
\begin{align}\label{identity 1 in weak sense}
-\int_{0}^{\infty}\int_{\Omega}n\varphi_{t}{\rm d}x{\rm d}t-\int_{\Omega}n_{0}\varphi(\cdot,0){\rm d}x=&-\int_{0}^{\infty}\int_{\Omega}\nabla n^{m}\cdot\nabla\varphi{\rm d}x{\rm d}t+\int_{0}^{\infty}\int_{\Omega}nS(x,n,c)\nabla c\cdot\nabla\varphi{\rm d}x{\rm d}t\notag\\&+\int_{0}^{\infty}\int_{\Omega}nu\cdot\nabla\varphi{\rm d}x{\rm d}t
\end{align}
for all $\varphi\in C_{0}^{\infty}([0,\infty)\times\overline{\Omega})$,
\begin{align}\label{identity 2 in weak sense}
-\int_{0}^{\infty}\int_{\Omega}c\varphi_{t}{\rm d}x{\rm d}t-\int_{\Omega}c_{0}\varphi(\cdot,0){\rm d}x=&-\int_{0}^{\infty}\int_{\Omega}\nabla c\cdot\nabla\varphi{\rm d}x{\rm d}t-\int_{0}^{\infty}\int_{\Omega}nf(c)\varphi{\rm d}x{\rm d}t\notag\\&+\int_{0}^{\infty}\int_{\Omega}cu\cdot\nabla\varphi{\rm d}x{\rm d}t
\end{align}
for all $\varphi\in C_{0}^{\infty}([0,\infty)\times\overline{\Omega})$ as well as
\begin{align}\label{identity 3 in weak sense}
-\int_{0}^{\infty}\int_{\Omega}u\cdot\zeta_{t}{\rm d}x{\rm d}t-\int_{\Omega}u_{0}\cdot\zeta(\cdot,0){\rm d}x=-\int_{0}^{\infty}\int_{\Omega}\nabla u\cdot\nabla\zeta{\rm d}x{\rm d}t+\int_{0}^{\infty}\int_{\Omega}n\nabla
\phi\cdot\zeta{\rm d}x{\rm d}t
\end{align}
hold for all $\zeta\in C_{0,\sigma}^{\infty}(\overline{\Omega}\times(0,\infty), \mathbb{R}^{3})$.
\end{defn}

\smallskip

Our main result reads as follows.
\begin{thm}\label{thm:the main theorem}
Let $\Omega\subset \mathbb{R}^3$ be a bounded convex domain with smooth boundary. Suppose that initial data and $S$ satisfies (\ref{the initial data conditions})-(\ref{condition 2 of S}) with some $m>m^\star(l)$, where
\begin{eqnarray}\label{definition of m star}
m^\star(l)=
\left\{\begin{array}{lll}
  \medskip
  l-\frac{5}{6},\ &\mathrm{if}\ \frac{31}{12}\geq l>2,\\
  \medskip
  \frac{7}{5}l-\frac{28}{15},\ &\mathrm{if}\ l>\frac{31}{12}.
\end{array}\right.
\end{eqnarray}
 Then, there exists a global weak solution $(n,c,u)$ to the system (\ref{the main system}) satisfying Definition \ref{definition:definition of weak solution}. And this solution is bounded in $\Omega\times(0,\infty)$ in the sense that with some $C>0$ we have
\begin{align}\label{boundedness result}
\|n(\cdot,t)\|_{L^\infty(\Omega)}+\|c(\cdot,t)\|_{W^{1,\infty}(\Omega)}+\|u(\cdot,t)\|_{W^{1,\infty}(\Omega)}\leq C\ \ \ \mathrm{for\ all}\ t>0.
\end{align}
Furthermore, $c$ and $u$ are continuous in $\overline{\Omega}\times[0,\infty)$ and
\begin{align}\label{weak continuous of n}
n\in C^{0}_{\omega-\star}([0,\infty); L^\infty(\Omega));
\end{align}
that is, $n$ is continuous on $[0,\infty)$ as an $L^\infty(\Omega)$-valued function with respect to the weak-$\star$ topology.
\end{thm}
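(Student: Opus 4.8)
The plan is to follow the now-standard approximation-and-compactness strategy for chemotaxis-Stokes systems with porous-medium diffusion, regularizing both the degenerate diffusion and the sensitivity so that a classical solution exists for each approximate problem, deriving a priori estimates that are uniform in the regularization parameter, and finally passing to the limit. First I would introduce a family of regularized problems: replace $\Delta n^m$ by $\nabla\cdot\big((n+\varepsilon)^{m-1}\nabla n\big)$ (or equivalently add $\varepsilon\Delta n$), truncate $S$ so that the off-diagonal entries vanish near $n=0$ and are smooth, and mollify the initial data. For fixed $\varepsilon\in(0,1)$ the resulting system is uniformly parabolic and coupled to a Stokes equation, so standard fixed-point and parabolic regularity arguments yield a local-in-time classical solution $(n_\varepsilon,c_\varepsilon,u_\varepsilon)$, together with the elementary bounds $n_\varepsilon\ge 0$, $0\le c_\varepsilon\le\|c_0\|_{L^\infty}$ and mass conservation $\int_\Omega n_\varepsilon=\int_\Omega n_0$.

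The core of the proof is the uniform-in-$\varepsilon$ estimates; this is where the condition $m>m^\star(l)$ enters. The plan is to test the first equation with $(n_\varepsilon+\varepsilon)^{p-1}$ for a suitable $p>1$ and the second with $-\Delta c_\varepsilon$ (or with $|\nabla c_\varepsilon|^{2(q-1)}\nabla c_\varepsilon$-type quantities, using the convexity of $\Omega$ to control boundary terms $\partial_\nu|\nabla c_\varepsilon|^2\le 0$), and to combine these via a coupled functional of the form $\int_\Omega (n_\varepsilon+\varepsilon)^p + \int_\Omega |\nabla c_\varepsilon|^{2q}$ plus fluid energy $\int_\Omega |u_\varepsilon|^2$ and higher norms $\|A^\alpha u_\varepsilon\|^2$. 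The cross term $\int n_\varepsilon S(x,n_\varepsilon,c_\varepsilon)\nabla c_\varepsilon\cdot\nabla((n_\varepsilon+\varepsilon)^{p-1})$ is estimated using $|S|\le Cn^{l-2}\widetilde S(c)$ and the $L^\infty$ bound on $c_\varepsilon$, which produces terms like $\int n_\varepsilon^{p+l-3}|\nabla n_\varepsilon||\nabla c_\varepsilon|$; these are absorbed by the good dissipation terms $\int (n_\varepsilon+\varepsilon)^{p+m-3}|\nabla n_\varepsilon|^2$ and $\int |\nabla c_\varepsilon|^{2q-2}|D^2c_\varepsilon|^2$ through Young's inequality and Gagliardo–Nirenberg interpolation in $\mathbb{R}^3$. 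Carefully optimizing the exponents $p$ and $q$ against the scaling of these interpolation inequalities is exactly what yields the threshold $m^\star(l)=l-\tfrac{5}{6}$ for $l\le\tfrac{31}{12}$ and $m^\star(l)=\tfrac{7}{5}l-\tfrac{28}{15}$ for larger $l$; the two regimes correspond to which interpolation constraint is binding. From these bounds one extracts, uniformly in $\varepsilon$, estimates for $n_\varepsilon$ in $L^\infty_{loc}([0,\infty);L^{p_0}(\Omega))$ for some $p_0>1$ and in $L^{m+\theta}_{loc}$ for space-time, for $\nabla n_\varepsilon^{(m+\cdot)/2}$ in $L^2_{loc}$, and for $c_\varepsilon,u_\varepsilon$ in suitable spaces, and then bootstraps (via the variation-of-constants formula for the heat and Stokes semigroups, Moser-type iteration for $n$, and $\alpha$-fractional-power estimates for $u$) to the uniform boundedness asserted in \eqref{boundedness result}.

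With uniform bounds in hand, the plan is to obtain strong compactness: estimate the time derivatives $\partial_t n_\varepsilon$, $\partial_t c_\varepsilon$, $\partial_t u_\varepsilon$ in negative-order spaces (e.g.\ $(W^{k,2}(\Omega))^*$ for $k$ large), so that an Aubin–Lions–Simon argument gives, along a subsequence $\varepsilon=\varepsilon_j\downarrow 0$, strong convergence $n_\varepsilon\to n$ in $L^p_{loc}(\overline\Omega\times[0,\infty))$, $c_\varepsilon\to c$ in $C^0_{loc}$ with $\nabla c_\varepsilon\to\nabla c$ strongly in $L^2_{loc}$, and $u_\varepsilon\to u$ strongly in $L^2_{loc}$ together with $\nabla u_\varepsilon\rightharpoonup\nabla u$. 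These suffice to pass to the limit in every term of \eqref{identity 1 in weak sense}--\eqref{identity 3 in weak sense}: the diffusion term uses $\nabla n_\varepsilon^m\rightharpoonup\nabla n^m$ (from the gradient bound plus a.e.\ convergence and $\varepsilon(n_\varepsilon+\varepsilon)^{m-1}\to 0$), the chemotactic term uses the strong $L^p$ convergence of $n_\varepsilon$ combined with strong $L^2$ convergence of $\nabla c_\varepsilon$ and the growth bound on $S$ to get an equiintegrable product, and the convective terms $n_\varepsilon u_\varepsilon$, $c_\varepsilon u_\varepsilon$, $u_\varepsilon\otimes u_\varepsilon$ converge by the strong convergences. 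Finally, the pressure $P$ is recovered a posteriori, and the claimed continuity properties follow: $c$ and $u$ are continuous in $\overline\Omega\times[0,\infty)$ by the uniform Hölder estimates inherited from parabolic/Stokes regularity, and $n\in C^0_{\omega-\star}([0,\infty);L^\infty(\Omega))$ follows from the uniform $L^\infty$ bound together with the equicontinuity of $t\mapsto\int_\Omega n_\varepsilon(\cdot,t)\varphi$ for test functions $\varphi$, obtained from the $L^1_{loc}$ time-derivative bound. I expect the main obstacle to be the a priori estimate step: tracking the interplay of the exponents in the coupled energy inequality — in particular ensuring that the chemotactic contribution, which is genuinely superlinear in $n$ because $l>2$, can be absorbed and that the resulting ODE for the energy functional has no finite-time blow-up precisely when $m>m^\star(l)$ — requires delicate use of Gagliardo–Nirenberg and will dictate the case distinction in \eqref{definition of m star}.
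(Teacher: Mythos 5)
Your overall strategy is the same as the paper's: regularize the degenerate diffusion by $(n_\varepsilon+\varepsilon)^m$, regularize $S$, establish local classical solvability, run a coupled energy/ODI estimate on $\int(n_\varepsilon+\varepsilon)^p+\int|\nabla c_\varepsilon|^{2q}+\int|A^{1/2}u_\varepsilon|^2$ using Gagliardo--Nirenberg interpolation and the $L^\infty$ bound on $c_\varepsilon$, bootstrap to $L^\infty$ bounds on $n_\varepsilon$ and $W^{1,\infty}$ bounds on $c_\varepsilon,u_\varepsilon$, and then pass to the limit via Aubin--Lions and Arzel\`a--Ascoli. Your identification of the source of the threshold (an exponent optimization in Gagliardo--Nirenberg together with an iterative upgrading of integrability for $n_\varepsilon$, with the two regimes of $m^\star(l)$ corresponding to which stage is binding) is correct in spirit, and the choices of tools (Moser iteration vs.\ variation-of-constants, $-\Delta c$ vs.\ $|\nabla c|^{2q}$ as test quantity) are interchangeable with what the paper does.

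One detail in your regularization plan is off in a way worth flagging. You propose to ``truncate $S$ so that the off-diagonal entries vanish near $n=0$,'' but that does not serve the purpose the paper's truncation serves, and it also is not needed for smoothness (since $S\in C^2$ by \eqref{condition 1 of S}). The paper instead multiplies $S$ by a spatial cutoff $\rho_\varepsilon\in C_0^\infty(\Omega)$ so that $S_\varepsilon\equiv 0$ near $\partial\Omega$. The reason is the boundary condition: the system imposes only $\nabla n\cdot\nu=\nabla c\cdot\nu=0$, not $(nS\nabla c)\cdot\nu=0$, and for a genuinely tensor-valued $S$ one cannot conclude $S\nabla c\cdot\nu=0$ from $\nabla c\cdot\nu=0$. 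Without the spatial cutoff the boundary integral $\int_{\partial\Omega} n_\varepsilon S\nabla c_\varepsilon\cdot\nu\,(n_\varepsilon+\varepsilon)^{p-1}$ does not vanish in the basic $L^p$ estimate, and the integration by parts in Lemma \ref{boundedness of n varepsilon in L^p} (and in the weak formulation) breaks down. A truncation in the $n$-variable near $n=0$ leaves that boundary term uncontrolled. Replacing your proposed truncation by the spatial cutoff $\rho_\varepsilon S$ repairs this and brings your proof into full alignment with the paper's.
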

\begin{rem}
If $l=2$ in the assumption (\ref{condition 2 of S}), the result above is consistent with \cite{MR3426095}.
\end{rem}

This paper is organized as follows. In Section 2, we approximate the problem by a well posed system (see(\ref{the regularized system}) later). Section 3 is devoted to study the boundedness of regularized problem, we will see that the bounds are independent of the way we regularized the problem. Thus, upon the appropriate uniform estimates, we can let $\varepsilon\rightarrow 0$ to obtain limit functions of the system (\ref{the regularized system}). This procedure is done in section 4, and also these limit functions are shown to solve (\ref{the main system}) in weak sense as defined in Definition \ref{definition:definition of weak solution}.
\vskip 3mm
\section{Approximation}
Our main methods is to apply the classical solution of an appropriately regularized system of (\ref{the main system}) to approximate the weak solution defined as in Definition \ref{definition:definition of weak solution}.
Following the same approximation procedure as in \cite{MR2505083}, we can find a family of functions $\{\rho_{\varepsilon}\}$ for any $\varepsilon\in(0,1)$ such that
\begin{align}\label{definiton of rho_varepsilon}
\rho_{\varepsilon}\in C_{0}^{\infty}(\Omega)\ \mathrm{with}\ 0\leq\rho_{\varepsilon}\leq 1\ \mathrm{in}\ \Omega\ \mathrm{and}\ \rho_{\varepsilon}\nearrow 1\ \mathrm{in}\ \Omega\ \mathrm{as}\ \varepsilon\searrow 0,
\end{align}
and define
\begin{align}\label{definition of S_varepsilon}
S_{\varepsilon}(x,n_{\varepsilon},c_{\varepsilon})=\rho_{\varepsilon}(x)S(x,n,c),\ x\in\overline{\Omega}.
\end{align}
Then, we have $S_{\varepsilon}(x,n_{\varepsilon},c_{\varepsilon})=0$ on $\partial\Omega$ and
\begin{align}\label{main property of S_varepsilon}
\left|S_{\varepsilon}(x,n_{\varepsilon},c_{\varepsilon})\right|\leq n_\varepsilon^{l-2}\widetilde S(c_{\varepsilon})\ \mathrm{for\ all}\ x\in\Omega, n_{\varepsilon}>0, c_{\varepsilon}>0.
\end{align}
Now, we consider the regularized system of (\ref{the main system}) as follows:
\begin{eqnarray}\label{the regularized system}
  \left\{\begin{array}{lll}
     \medskip
    n_{\varepsilon t}+u_{\varepsilon}\cdot\nabla n_\varepsilon=\Delta (n_\varepsilon+\varepsilon)^{m}-\nabla\cdot(n_{\varepsilon}S_{\varepsilon}(x,n_{\varepsilon},c_{\varepsilon})\cdot\nabla c_{\varepsilon}),&(x,t)\in\Omega\times (0, T),\\
     \medskip
    c_{\varepsilon t}+u_{\varepsilon}\cdot\nabla c_{\varepsilon}=\Delta c_{\varepsilon}-n_{\varepsilon}f(c_{\varepsilon}),&(x,t)\in\Omega\times (0, T),\\
     \medskip
     u_{\varepsilon t}+\nabla P_{\varepsilon}=\Delta u_{\varepsilon}+n_{\varepsilon}\nabla\phi,&(x,t)\in\Omega\times (0, T),\\
    \nabla\cdot u_{\varepsilon}=0, &(x,t)\in\Omega\times (0, T),\\
      \medskip
    \nabla n_{\varepsilon}\cdot\nu=\nabla c_{\varepsilon}\cdot\nu=0, u_{\varepsilon}=0, &(x,t)\in\partial\Omega\times (0,T),\\
      \medskip
    n_{\varepsilon}(x,0)=n_{0}(x), c_{\varepsilon}(x,0)=c_{0}(x), u_{\varepsilon}(x,0)=u_{0}(x), &x\in\Omega,
  \end{array}\right.
\end{eqnarray}

The first lemma concerns the local solvability of the regularized system (\ref{the regularized system}) in classical sense. Without lose of generality, the proof is based on well-established methods involving the Schaulder fixed point theorem, the standard regularity theory of parabolic equations and the Stokes system. For more details, we can refer to \cite{MR3383312}.

\begin{lem}\label{local solvability of the regularized system}
For all $\varepsilon\in(0,1)$, let $\Omega\subset\mathbb{R}^3$ be a bounded domain with smooth boundary. Assume that initial data $(n_{0}, c_{0}, u_{0})$ satisfies (\ref{the initial data conditions}), and $S$ fulfills (\ref{condition 1 of S})-(\ref{condition 2 of S}). Then there exist a maximal existence time $T_{max, \varepsilon}\in(0,\infty]$ and functions
\begin{align*}
&n_{\varepsilon}\in C^{0}(\overline{\Omega}\times [0,T_{max,\varepsilon}))\cap C^{2,1}(\overline{\Omega}\times (0,T_{max,\varepsilon})),\\
&c_{\varepsilon}\in C^{0}(\overline{\Omega}\times [0,T_{max,\varepsilon}))\cap C^{2,1}(\overline{\Omega}\times (0,T_{max,\varepsilon})),\\
&u_{\varepsilon}\in C^{0}(\overline{\Omega}\times [0,T_{max,\varepsilon}))\cap C^{2,1}(\overline{\Omega}\times (0,T_{max,\varepsilon})),\\
&P_\varepsilon\in C^1(\overline{\Omega}\times [0,T_{max,\varepsilon})),
\end{align*}
 such that $(n_{\varepsilon},c_{\varepsilon},u_{\varepsilon},P_\varepsilon)$ is a classical solution of (\ref{the regularized system}) in $\Omega\times (0,T_{max, \varepsilon})$, and such that $n_\varepsilon$ and $c_\varepsilon$ are nonnegative. Moreover, if $T_{max, \varepsilon}<\infty$,
\begin{align}\label{principle for global existence of solution}
\|n_{\varepsilon}(\cdot,t)\|_{L^{\infty}(\Omega)}+\|c_{\varepsilon}(\cdot,t)\|_{W^{1,\infty}(\Omega)}+\|A^\alpha u_{\varepsilon}(\cdot,t)\|_{L^2(\Omega)}\rightarrow\infty
\end{align}
as $t\rightarrow T_{max,\varepsilon}$, where $\alpha$ is defined in (\ref{the initial data conditions}).
\end{lem}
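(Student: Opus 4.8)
The plan is to construct the local solution by a Schauder fixed-point argument applied to a suitably decoupled version of (\ref{the regularized system}), and then to upgrade the regularity of the resulting solution by parabolic Schauder theory. Fix $\varepsilon\in(0,1)$. For $T\in(0,1)$ and $R>0$ to be chosen, I would introduce the closed, bounded, convex set
$$
\mathcal{X}_T:=\Big\{(\bar n,\bar c,\bar u)\in C^0(\overline\Omega\times[0,T];\mathbb{R}^5)\ :\ \bar n\geq0,\ \bar c\geq0,\ \|(\bar n,\bar c,\bar u)\|_{C^0}\leq R,\ \bar n(\cdot,0)=n_0,\ \bar c(\cdot,0)=c_0,\ \bar u(\cdot,0)=u_0\Big\}
$$
(adding, if convenient, a uniform time-Hölder constraint to secure compactness). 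Given $(\bar n,\bar c,\bar u)\in\mathcal X_T$, one solves three decoupled problems: first the Stokes system $u_t+\nabla P=\Delta u+\bar n\nabla\phi$, $\nabla\cdot u=0$, $u|_{\partial\Omega}=0$, $u(\cdot,0)=u_0$; then the linear parabolic problem $c_t+\bar u\cdot\nabla c=\Delta c-\bar n f(\bar c)$ with $\partial_\nu c|_{\partial\Omega}=0$, $c(\cdot,0)=c_0$; and finally the quasilinear problem $n_t+\bar u\cdot\nabla n=\Delta(n+\varepsilon)^m-\nabla\cdot\big(n\,S_\varepsilon(x,\bar n,\bar c)\cdot\nabla\bar c\big)$ with $\partial_\nu n|_{\partial\Omega}=0$, $n(\cdot,0)=n_0$. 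Setting $\Phi(\bar n,\bar c,\bar u):=(n,c,u)$, a fixed point of $\Phi$ is a local solution.

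Each subproblem is solvable by standard theory. For $u$ one uses the Helmholtz projection $\mathcal P$ and the analytic Stokes semigroup $(e^{-tA})_{t\geq0}$, writing $u(t)=e^{-tA}u_0+\int_0^t e^{-(t-s)A}\mathcal P(\bar n(s)\nabla\phi)\,ds$ and invoking the smoothing estimates for $A^\alpha e^{-tA}$ together with $u_0\in D(A^\alpha)$, $\alpha\in(\tfrac34,1)$, to control $u$ in $C^0$ and, on $(0,T]$, in $C^1$. For $c$ one uses the Neumann heat semigroup and linear parabolic $L^p$/Schauder estimates, $\bar n f(\bar c)$ and $\bar u\cdot\nabla c$ being admissible. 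For $n$ one uses that, since $\varepsilon>0$ and $n$ stays nonnegative and bounded, the diffusion coefficient $m(n+\varepsilon)^{m-1}$ is bounded above and bounded away from zero on the relevant range, so the equation is uniformly (non-degenerate) parabolic, and the theory of quasilinear parabolic problems (Amann, or Ladyzhenskaya--Solonnikov--Ural'ceva) applies, with the drift and cross-diffusion terms entering as lower-order perturbations whose coefficients are controlled through (\ref{condition 1 of S})--(\ref{condition 2 of S}) and $\bar c\in C^0$. Choosing $R$ large depending on the data and then $T$ small gives $\Phi(\mathcal X_T)\subset\mathcal X_T$; the same estimates provide a uniform Hölder bound on $\Phi(\mathcal X_T)$, making $\Phi$ compact and continuous, so Schauder's theorem yields a fixed point $(n_\varepsilon,c_\varepsilon,u_\varepsilon)$ with pressure $P_\varepsilon$.

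With a solution in hand, a bootstrap using interior and boundary parabolic Schauder estimates (successively for the Stokes system, for $c_\varepsilon$, and for $n_\varepsilon$, whose equation has now-smooth coefficients away from $t=0$) yields $n_\varepsilon,c_\varepsilon,u_\varepsilon\in C^{2,1}(\overline\Omega\times(0,T_{\max,\varepsilon}))$ and $P_\varepsilon\in C^1$. Nonnegativity of $c_\varepsilon$ follows from the maximum principle, since $c\equiv0$ is a supersolution of its linear equation once $n_\varepsilon\geq0$ and $f\geq0$; nonnegativity of $n_\varepsilon$ follows from the comparison principle, since $n\equiv0$ is a subsolution of its equation because the chemotactic flux vanishes where $n=0$. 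Defining $T_{\max,\varepsilon}$ as the supremum of existence times of such classical solutions, the extensibility criterion (\ref{principle for global existence of solution}) is obtained by the usual contradiction argument: if $T_{\max,\varepsilon}<\infty$ while the left-hand side of (\ref{principle for global existence of solution}) stayed bounded along some $t\nearrow T_{\max,\varepsilon}$, then re-running the local existence step with initial time close to $T_{\max,\varepsilon}$ (the bound on $\|A^\alpha u_\varepsilon(\cdot,t)\|_{L^2}$ furnishing an admissible $D(A^\alpha)$ datum and the $L^\infty$, $W^{1,\infty}$ bounds on $n_\varepsilon$, $c_\varepsilon$ the rest) would extend the solution past $T_{\max,\varepsilon}$, a contradiction. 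There is no genuine obstacle here; the only point demanding care is the $n$-equation, where one must keep nonnegativity and the $C^0$-bound under control throughout the iteration so that the $\varepsilon$-regularization truly renders the diffusion uniformly parabolic, and simultaneously absorb the cross-diffusion term $\nabla\cdot(nS_\varepsilon\cdot\nabla\bar c)$ — this is exactly where the smoothness and compact support of $\rho_\varepsilon$ and the structure hypotheses (\ref{condition 1 of S})--(\ref{condition 2 of S}) are used, the Stokes coupling in the $L^\infty$-in-time $D(A^\alpha)$ framework being routine.
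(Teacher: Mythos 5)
Your argument is correct and coincides with the route the paper itself takes: the paper does not write out a proof but explicitly defers to the standard Schauder fixed-point construction combined with parabolic regularity theory and Stokes semigroup estimates (citing \cite{MR3383312}), which is precisely what you have fleshed out, including the key observation that the $\varepsilon$-regularization keeps the $n$-equation uniformly parabolic. No discrepancy to report.
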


Therefore, in order to prove the global existence of the regularized problem, it is sufficient for us to show the boundedness for each term in (\ref{principle for global existence of solution}) under the assumption that $T_{max,\varepsilon}<\infty$. The following lemma is immediately obtain upon observation.

\begin{lem}\label{property 1 of n varepsilon and c varepsilon}
For any $\varepsilon\in(0,1)$, let $(n_\varepsilon,c_\varepsilon,u_\varepsilon,P_\varepsilon)$ be a classical solution to (\ref{the regularized system}). It follows that
\begin{align}\label{mass conservation of n varepsilon}
\int_{\Omega}n_{\varepsilon}(\cdot,t){\rm d}x=\int_{\Omega}n_{0}{\rm d}x,\ \ \mathrm{for\ all}\ t\in(0,T_{max,\varepsilon}),
\end{align}
and
\begin{align}\label{L^infty boundedness of c varepsilon}
\|c_{\varepsilon}(\cdot, t)\|_{L^{\infty}(\Omega)}\leq \|c_{0}\|_{L^{\infty}(\Omega)}\ \ \mathrm{for\ all}\ t\in(0,T_{max,\varepsilon}).
\end{align}
\end{lem}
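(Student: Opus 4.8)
The plan is to prove Lemma~\ref{property 1 of n varepsilon and c varepsilon} directly from the structure of the regularized system \eqref{the regularized system}, exploiting only the no-flux boundary conditions, the divergence-free condition on $u_\varepsilon$, and the sign properties of $f$ and $n_\varepsilon$. Both assertions are standard consequences of testing the first two equations by suitable constants, but one must be a little careful with the convective terms.

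\textbf{Mass conservation for $n_\varepsilon$.} First I would integrate the $n_\varepsilon$-equation in \eqref{the regularized system} over $\Omega$. For the left-hand side, $\int_\Omega n_{\varepsilon t}=\frac{d}{dt}\int_\Omega n_\varepsilon$, while the convective term vanishes: since $\nabla\cdot u_\varepsilon=0$ and $u_\varepsilon=0$ on $\partial\Omega$, one has $\int_\Omega u_\varepsilon\cdot\nabla n_\varepsilon=\int_\Omega\nabla\cdot(n_\varepsilon u_\varepsilon)=\int_{\partial\Omega}n_\varepsilon\,u_\varepsilon\cdot\nu=0$. On the right-hand side, $\int_\Omega\Delta(n_\varepsilon+\varepsilon)^m=\int_{\partial\Omega}\nabla(n_\varepsilon+\varepsilon)^m\cdot\nu$, which vanishes because $\nabla n_\varepsilon\cdot\nu=0$ on $\partial\Omega$ (and $(n_\varepsilon+\varepsilon)^m$ is a smooth function of $n_\varepsilon$, so $\nabla(n_\varepsilon+\varepsilon)^m\cdot\nu=m(n_\varepsilon+\varepsilon)^{m-1}\nabla n_\varepsilon\cdot\nu=0$). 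Likewise $\int_\Omega\nabla\cdot\big(n_\varepsilon S_\varepsilon\cdot\nabla c_\varepsilon\big)=\int_{\partial\Omega}n_\varepsilon\,(S_\varepsilon\cdot\nabla c_\varepsilon)\cdot\nu=0$, using that $S_\varepsilon$ vanishes on $\partial\Omega$ (as recorded just before \eqref{main property of S_varepsilon}). Hence $\frac{d}{dt}\int_\Omega n_\varepsilon=0$, and integrating in time together with $n_\varepsilon(\cdot,0)=n_0$ gives \eqref{mass conservation of n varepsilon}.

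\textbf{$L^\infty$ bound for $c_\varepsilon$.} Since $f\ge 0$ by \eqref{f condition} and $n_\varepsilon\ge 0$ by Lemma~\ref{local solvability of the regularized system}, the reaction term $-n_\varepsilon f(c_\varepsilon)\le 0$, so the second equation says $c_{\varepsilon t}+u_\varepsilon\cdot\nabla c_\varepsilon-\Delta c_\varepsilon\le 0$. The constant function $\overline c:=\|c_0\|_{L^\infty(\Omega)}$ is a supersolution: it satisfies $\overline c_t+u_\varepsilon\cdot\nabla\overline c-\Delta\overline c=0$, it dominates the initial data $c_0$, and it is compatible with the homogeneous Neumann boundary condition. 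By the parabolic comparison principle (applied on $\overline\Omega\times[0,T]$ for any $T<T_{max,\varepsilon}$), $c_\varepsilon\le\overline c$. Combined with the nonnegativity $c_\varepsilon\ge 0$ already furnished by Lemma~\ref{local solvability of the regularized system}, this yields \eqref{L^infty boundedness of c varepsilon}. (Alternatively, one may test the $c_\varepsilon$-equation by $p(c_\varepsilon-\overline c)_+^{p-1}$, use that the convective and reaction terms have favorable signs, and let $p\to\infty$; the comparison-principle argument is cleaner.)

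\textbf{Main obstacle.} There is no serious obstacle here — this lemma is the ``easy'' a~priori information that seeds the bootstrap in Section~3. The only points requiring minor care are the vanishing of the boundary integrals (which hinges on the no-flux conditions and on $S_\varepsilon|_{\partial\Omega}=0$, an artifact of the regularization) and invoking the comparison principle in the presence of the drift term $u_\varepsilon\cdot\nabla c_\varepsilon$, which is legitimate because $u_\varepsilon$ is smooth up to the boundary and the constant supersolution has zero gradient so the drift term drops out entirely.
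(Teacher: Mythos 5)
Your proposal is correct and follows the same route as the paper: integrating the first equation over $\Omega$ (with the boundary terms killed by the no-flux conditions, $\nabla\cdot u_\varepsilon=0$, and $S_\varepsilon|_{\partial\Omega}=0$) for the mass identity, and the maximum/comparison principle with the constant supersolution $\|c_0\|_{L^\infty(\Omega)}$ for the $c_\varepsilon$ bound, using the nonnegativity of $n_\varepsilon$ and $f$. You simply spell out the details that the paper's two-sentence proof leaves implicit.
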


\begin{proof}
The mass conservation (\ref{mass conservation of n varepsilon}) is a straightforward consequence of an integration of the first equation of (\ref{the regularized system}) over $\Omega$. Since $n_{\varepsilon}$ and $c_{\varepsilon}$ are nonnegative, an application of the maximum principle to the second equation in regularized system yields (\ref{L^infty boundedness of c varepsilon}).
\end{proof}

\vskip 3mm
\begin{lem}\label{relationships between boundedness of n varepsilon and Du varepsilon}
Let $p\in [1,\infty)$ and $r\in [1,\infty]$ be such that
\begin{eqnarray*}
  \left\{\begin{array}{lll}
  r<\frac{3p}{3-p},\ \ \ &\mathrm{if}\ p\leq 3,\\
  r\leq\infty,\ \ \ \ &\mathrm{if}\ p>3.
\end{array}\right.
\end{eqnarray*}
Then for all $K>0$ there exists $C_1=C(p,r,K)$ such that for some $\varepsilon\in(0,1)$ we have

\begin{align*}
\|n_\varepsilon(\cdot,t)\|_{L^p(\Omega)}\leq K\ \ \ \mathrm{for\ all}\ t\in(0,T_{max,\varepsilon}),
\end{align*}
then
\begin{align}\label{conclusion 1 to Du varepsilon}
\|Du_\varepsilon(\cdot,t)\|_{L^r(\Omega)}\leq C_1\ \ \mathrm{for\ all}\ t\in(0,T_{max,\varepsilon}).
\end{align}
\end{lem}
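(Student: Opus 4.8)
The plan is to estimate $u_\varepsilon$ through the Duhamel representation associated with the Stokes semigroup $(e^{-tA})_{t\geq 0}$ generated by the Stokes operator $A=-P\Delta$ on the solenoidal space. Writing the third equation of \eqref{the regularized system} in the form $u_{\varepsilon t}+Au_\varepsilon=P(n_\varepsilon\nabla\phi)$ with $u_\varepsilon(\cdot,0)=u_0$, the variation-of-constants formula gives
\begin{align*}
u_\varepsilon(\cdot,t)=e^{-tA}u_0+\int_0^t e^{-(t-s)A}P\big(n_\varepsilon(\cdot,s)\nabla\phi\big)\,{\rm d}s,\ \ \ t\in(0,T_{max,\varepsilon}),
\end{align*}
and hence, applying $\nabla$ (or, equivalently, testing with a suitable power of $A$ since $D(A^\beta)\hookrightarrow W^{1,r}(\Omega)$ for appropriate $\beta$),
\begin{align*}
\|Du_\varepsilon(\cdot,t)\|_{L^r(\Omega)}\leq \|De^{-tA}u_0\|_{L^r(\Omega)}+\int_0^t\big\|De^{-(t-s)A}P\big(n_\varepsilon(\cdot,s)\nabla\phi\big)\big\|_{L^r(\Omega)}\,{\rm d}s.
\end{align*}
The first term is bounded uniformly in $t$ by the assumption $u_0\in D(A^\alpha)$ with $\alpha\in(\tfrac34,1)$ in \eqref{the initial data conditions} together with the embedding $D(A^\alpha)\hookrightarrow W^{1,r}(\Omega)$ valid for the admissible range of $r$; in fact $De^{-tA}$ applied to $u_0$ is even decaying.

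For the integral term I would invoke the standard $L^p$–$L^q$ smoothing estimates for the Stokes semigroup: there exist $C>0$ and $\lambda>0$ (the first Stokes eigenvalue) such that
\begin{align*}
\|A^\theta e^{-tA}Pg\|_{L^r(\Omega)}\leq C t^{-\theta-\frac32(\frac1p-\frac1r)}e^{-\lambda t}\|g\|_{L^p(\Omega)}
\end{align*}
for all $t>0$ and $g\in L^p(\Omega)^3$, choosing $\theta\in(\tfrac12,1)$ so that $D(A^\theta)\hookrightarrow W^{1,r}(\Omega)$. Since $|n_\varepsilon(\cdot,s)\nabla\phi|\leq \|\nabla\phi\|_{L^\infty(\Omega)}|n_\varepsilon(\cdot,s)|$ by \eqref{phi condition}, the hypothesis $\|n_\varepsilon(\cdot,t)\|_{L^p(\Omega)}\leq K$ yields
\begin{align*}
\|Du_\varepsilon(\cdot,t)\|_{L^r(\Omega)}\leq C' + C''\|\nabla\phi\|_{L^\infty(\Omega)}K\int_0^t (t-s)^{-\theta-\frac32(\frac1p-\frac1r)}e^{-\lambda(t-s)}\,{\rm d}s,
\end{align*}
and the last integral is bounded by $\int_0^\infty \sigma^{-\theta-\frac32(\frac1p-\frac1r)}e^{-\lambda\sigma}\,{\rm d}\sigma<\infty$ precisely when the exponent satisfies $\theta+\tfrac32(\tfrac1p-\tfrac1r)<1$. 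This is where the stated restriction on $(p,r)$ enters: when $p>3$ one may take $r=\infty$ and still choose $\theta$ close enough to $\tfrac12$ to keep the exponent below $1$; when $p\leq 3$ the strict inequality $r<\tfrac{3p}{3-p}$ is exactly what guarantees $\tfrac32(\tfrac1p-\tfrac1r)<\tfrac12$, leaving room for an admissible $\theta\in(\tfrac12,1)$ with $D(A^\theta)\hookrightarrow W^{1,r}(\Omega)$.

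The main obstacle — really the only delicate point — is the bookkeeping of exponents: one must simultaneously satisfy $D(A^\theta)\hookrightarrow W^{1,r}(\Omega)$ (which forces $\theta$ not too small, roughly $\theta\geq\tfrac12+\tfrac{3}{2r}$ up to the boundary behaviour of the Stokes operator) and the integrability condition $\theta+\tfrac32(\tfrac1p-\tfrac1r)<1$ (which forces $\theta$ not too large). Checking that the admissible window for $\theta$ is nonempty under the stated hypothesis on $(p,r)$, and handling the borderline case $r=\infty$ when $p>3$ via a slightly different embedding or an $\eps$-loss in $r$, is the content of the proof; everything else is the routine semigroup estimate above, with the constant $C_1=C(p,r,K)$ emerging explicitly from $\|u_0\|_{D(A^\alpha)}$, $\|\nabla\phi\|_{L^\infty(\Omega)}$, $K$, and the value of the convergent time integral.
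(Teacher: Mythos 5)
Your strategy — Duhamel for the Stokes equation plus $L^p$--$L^q$ smoothing of the Stokes semigroup, with the exponent condition $\tfrac12+\tfrac32(\tfrac1p-\tfrac1r)<1$ recovering precisely the stated hypothesis $r<\tfrac{3p}{3-p}$ — is exactly the route taken in the reference \cite{MR3426095} to which the paper delegates this proof, so the overall approach matches.

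One bookkeeping slip: the threshold you quote for $D(A^\theta)\hookrightarrow W^{1,r}(\Omega)$, namely ``$\theta\geq\tfrac12+\tfrac{3}{2r}$,'' has the wrong sign and form. If you realize $A$ on $L^2_\sigma$, the Sobolev embedding $H^{2\theta}\hookrightarrow W^{1,r}$ in three dimensions needs $2\theta-\tfrac32\geq 1-\tfrac3r$, i.e. $\theta\geq\tfrac54-\tfrac{3}{2r}$, which would make $\theta>1$ for large $r$ and render the argument unworkable. The way to avoid this is to realize $A$ on $L^r_\sigma$, where $D(A^\theta)\hookrightarrow W^{2\theta,r}$ and $\theta\geq\tfrac12$ already suffices — or, cleaner still (and this is what Winkler does), skip $A^\theta$ entirely and use the gradient smoothing estimate $\|\nabla e^{-tA}g\|_{L^r}\leq C\,t^{-\frac12-\frac32(\frac1p-\frac1r)}e^{-\lambda t}\|g\|_{L^p}$ directly. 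That yields the singularity exponent $\tfrac12+\tfrac32(\tfrac1p-\tfrac1r)$ without any embedding step, and the integrability condition is exactly the hypothesis on $(p,r)$. With that correction the argument closes as you describe.
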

\begin{proof}
The proof of this lemma is based on some fundamental estimates of semigroup, and the details can be seen in \cite{MR3426095}  which is omitted here.
\end{proof}
\begin{lem}\label{boundedness of n varepsilon in L^p}
 Let $p>1$, (\ref{condition 2 of S}) is satisfied. Then for each $\varepsilon\in(0,1)$ we have
\begin{align}
\frac{1}{p}\frac{{\rm d}}{{\rm d}t}\int_{\Omega}(n_\varepsilon+\varepsilon)^p{\rm d}x&+\frac{2m(p-1)}{(m+p-1)^2}\int_{\Omega}|\nabla (n_\varepsilon+\varepsilon)^{\frac{m+p-1}{2}}|^2{\rm d}x\notag\\&\leq\frac{C_{0}^{2}(p-1)}{2m}\int_{\Omega}(n_\varepsilon+\varepsilon)^{p+2l-m-3}|\nabla c_{\varepsilon}|^{2}{\rm d}x\ \ \mathrm{for\ all}\ t\in(0,T_{max,\varepsilon}),
\end{align}
where
$$C_{0}=\widetilde{S}(\|c_0\|_{L^\infty(\Omega)})$$
with $\widetilde{S}$ defined as in (\ref{condition 2 of S}).
\end{lem}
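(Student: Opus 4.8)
The plan is to obtain the inequality by testing the first equation of the regularized system (\ref{the regularized system}) with $(n_\varepsilon+\varepsilon)^{p-1}$ and integrating by parts over $\Omega$, keeping careful track of all boundary terms. Multiplying $n_{\varepsilon t}+u_\varepsilon\cdot\nabla n_\varepsilon=\Delta(n_\varepsilon+\varepsilon)^m-\nabla\cdot(n_\varepsilon S_\varepsilon\nabla c_\varepsilon)$ by $(n_\varepsilon+\varepsilon)^{p-1}$ and integrating, the time derivative yields $\frac1p\frac{d}{dt}\int_\Omega(n_\varepsilon+\varepsilon)^p$, while the transport term vanishes since $\int_\Omega(u_\varepsilon\cdot\nabla n_\varepsilon)(n_\varepsilon+\varepsilon)^{p-1}=\frac1p\int_\Omega u_\varepsilon\cdot\nabla(n_\varepsilon+\varepsilon)^p=-\frac1p\int_\Omega(\nabla\cdot u_\varepsilon)(n_\varepsilon+\varepsilon)^p=0$ because $\nabla\cdot u_\varepsilon=0$ and $u_\varepsilon=0$ on $\partial\Omega$. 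For the diffusion term, integrating by parts and using the no-flux condition $\nabla n_\varepsilon\cdot\nu=0$ on $\partial\Omega$ (which forces $\partial_\nu(n_\varepsilon+\varepsilon)^m=0$) gives $\int_\Omega\Delta(n_\varepsilon+\varepsilon)^m\,(n_\varepsilon+\varepsilon)^{p-1}=-m(p-1)\int_\Omega(n_\varepsilon+\varepsilon)^{m+p-3}|\nabla n_\varepsilon|^2$, which via the identity $|\nabla(n_\varepsilon+\varepsilon)^{\frac{m+p-1}{2}}|^2=\frac{(m+p-1)^2}{4}(n_\varepsilon+\varepsilon)^{m+p-3}|\nabla n_\varepsilon|^2$ equals $-\frac{4m(p-1)}{(m+p-1)^2}\int_\Omega|\nabla(n_\varepsilon+\varepsilon)^{\frac{m+p-1}{2}}|^2$.

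Next I would treat the chemotactic term. Since $S_\varepsilon=0$ on $\partial\Omega$, integration by parts gives $-\int_\Omega\nabla\cdot(n_\varepsilon S_\varepsilon\nabla c_\varepsilon)(n_\varepsilon+\varepsilon)^{p-1}=(p-1)\int_\Omega(n_\varepsilon+\varepsilon)^{p-2}(n_\varepsilon S_\varepsilon\nabla c_\varepsilon)\cdot\nabla n_\varepsilon$. I would then invoke the structural bound (\ref{main property of S_varepsilon}), the monotonicity of $\widetilde S$ together with the pointwise estimate $\|c_\varepsilon(\cdot,t)\|_{L^\infty(\Omega)}\le\|c_0\|_{L^\infty(\Omega)}$ from Lemma \ref{property 1 of n varepsilon and c varepsilon}, and the elementary inequality $n_\varepsilon^{l-1}\le(n_\varepsilon+\varepsilon)^{l-1}$, valid because $l>2$, to get $n_\varepsilon|S_\varepsilon|\le C_0(n_\varepsilon+\varepsilon)^{l-1}$ with $C_0=\widetilde S(\|c_0\|_{L^\infty(\Omega)})$; hence the chemotactic contribution is at most $C_0(p-1)\int_\Omega(n_\varepsilon+\varepsilon)^{p+l-3}|\nabla c_\varepsilon|\,|\nabla n_\varepsilon|$.

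Finally, I would split the exponent as $p+l-3=\tfrac{m+p-3}{2}+\tfrac{p+2l-m-3}{2}$ and apply Young's inequality in the form $C_0(p-1)ab\le\tfrac{m(p-1)}{2}a^2+\tfrac{C_0^2(p-1)}{2m}b^2$ with $a=(n_\varepsilon+\varepsilon)^{\frac{m+p-3}{2}}|\nabla n_\varepsilon|$ and $b=(n_\varepsilon+\varepsilon)^{\frac{p+2l-m-3}{2}}|\nabla c_\varepsilon|$. This absorbs exactly one half of the diffusive dissipation $m(p-1)\int_\Omega(n_\varepsilon+\varepsilon)^{m+p-3}|\nabla n_\varepsilon|^2$, leaving on the left $\frac{2m(p-1)}{(m+p-1)^2}\int_\Omega|\nabla(n_\varepsilon+\varepsilon)^{\frac{m+p-1}{2}}|^2$ and on the right $\frac{C_0^2(p-1)}{2m}\int_\Omega(n_\varepsilon+\varepsilon)^{p+2l-m-3}|\nabla c_\varepsilon|^2$, which is the claimed inequality.

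There is no genuinely hard step here: the argument is a routine $L^p$-testing computation. The only points demanding a little care are the vanishing of all boundary integrals (ensured by the homogeneous Neumann condition for $n_\varepsilon$, by $S_\varepsilon|_{\partial\Omega}=0$, and by $u_\varepsilon|_{\partial\Omega}=0$ combined with $\nabla\cdot u_\varepsilon=0$), the passage from $n_\varepsilon$ to $n_\varepsilon+\varepsilon$ in the sensitivity bound (using $l>2$), and choosing the Young parameter so that precisely the factor $\tfrac12$ of the diffusive dissipation is consumed, which is what produces the exact constant $\frac{2m(p-1)}{(m+p-1)^2}$ in the statement.
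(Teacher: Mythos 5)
Your proof is correct and follows essentially the same route as the paper: test with $(n_\varepsilon+\varepsilon)^{p-1}$, drop the transport term via $\nabla\cdot u_\varepsilon=0$, rewrite the diffusive dissipation in terms of $|\nabla(n_\varepsilon+\varepsilon)^{(m+p-1)/2}|^2$, bound $n_\varepsilon|S_\varepsilon|\le C_0(n_\varepsilon+\varepsilon)^{l-1}$ using (\ref{main property of S_varepsilon}) and (\ref{L^infty boundedness of c varepsilon}), and absorb half the dissipation by Young's inequality. Your write-up is slightly more explicit than the paper's (the vanishing of the transport term, the $n_\varepsilon\le n_\varepsilon+\varepsilon$ step, and the exact Young split), but the approach and the resulting constants are the same.
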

 \begin{proof}
 We multiply the first equation in (\ref{the regularized system}) with $(n_\varepsilon+\varepsilon)^{p-1}$ $(p>1)$ and integrate by parts over $\Omega$. Since $\nabla u_\varepsilon$ and $S_\varepsilon(x,n_\varepsilon,c_\varepsilon)$ vanish whenever $x\in\partial\Omega$, this yields
 \begin{align*}
\frac{1}{p}\frac{{\rm d}}{{\rm d}t}\int_{\Omega}(n_\varepsilon+\varepsilon)^p{\rm d}x=&-\int_{\Omega}\nabla(n_\varepsilon+\varepsilon)^m\cdot\nabla(n_\varepsilon+\varepsilon)^{p-1}{\rm d}x\\&+\int_{\Omega}(n_\varepsilon S_\varepsilon(x,n_\varepsilon,c_\varepsilon)\nabla c_\varepsilon)\cdot\nabla(n_\varepsilon+\varepsilon)^{p-1}{\rm d}x\\
=& -m(p-1)\int_\Omega(n_\varepsilon+\varepsilon)^{m+p-3}|\nabla(n_\varepsilon+\varepsilon)|^2{\rm d}x\\&+(p-1)\int_\Omega n_\varepsilon(n_\varepsilon+\varepsilon)^{p-2}S_\varepsilon(x,n_\varepsilon,c_\varepsilon)\cdot\nabla(n_\varepsilon+\varepsilon){\rm d}x
\end{align*}
for all $t\in(0,T_{max,\varepsilon})$. Define
  $$C_{0}=\widetilde{S}(\|c_0\|_{L^\infty(\Omega)}).$$
 Due to the nonnegativity of $n_\varepsilon$, (\ref{main property of S_varepsilon}) and (\ref{L^infty boundedness of c varepsilon}), we have $|S_\varepsilon(x,n_\varepsilon,c_\varepsilon)|\leq C_{0}n_\varepsilon^{l-2}$. Furthermore, by applying Young's inequality, we derive that
\begin{align*}
\frac{1}{p}&\frac{{\rm d}}{{\rm d}t}\int_{\Omega}(n_\varepsilon+\varepsilon)^p{\rm d}x\\
&\leq -\frac{4m(p-1)}{(m+p-1)^2}\int_{\Omega}|\nabla (n_\varepsilon+\varepsilon)^{\frac{m+p-1}{2}}|^2{\rm d}x+(p-1)C_{0}\int_\Omega (n_\varepsilon+\varepsilon)^{p+l-3}\nabla c_\varepsilon\cdot\nabla(n_\varepsilon+\varepsilon){\rm d}x\notag\\&\leq -\frac{2m(p-1)}{(m+p-1)^2}\int_{\Omega}|\nabla (n_\varepsilon+\varepsilon)^{\frac{m+p-1}{2}}|^2{\rm d}x+\frac{C_{0}^{2}(p-1)}{2m}\int_{\Omega}(n_\varepsilon+\varepsilon)^{p+2l-m-3}|\nabla c_{\varepsilon}|^{2}{\rm d}x
\end{align*}
for all $t\in(0,T_{max,\varepsilon})$.
Thus, the proof of this lemma is completed.
 \end{proof}
\vskip 2mm
\begin{lem}\label{difference of nabla c varepsilon}
Let $q>1$ and $\varepsilon\in(0,1)$. Then we have
\begin{align}\label{differential inequality about nabla c 2q}
\frac{1}{2q}&\frac{{\rm d}}{{\rm d}t}\int_\Omega|\nabla c_\varepsilon|^{2q}{\rm d}x +\frac{3(q-1)}{8}\int_\Omega|\nabla c_\varepsilon|^{2(q-2)}\left|\nabla|\nabla c_\varepsilon|^2\right|^2{\rm d}x+\frac{1}{4}\int_{\Omega}|\nabla c_\varepsilon|^{2(q-1)}|D^2 c_\varepsilon|^2{\rm d}x\notag\\
&\leq(2-3+\sqrt{3})^2f_1^2\int_\Omega n_\varepsilon^2|\nabla c_\varepsilon|^{2q-2}{\rm d}x+(2q+1)\int_\Omega|\nabla c_\varepsilon|^{2q}|Du_\varepsilon|{\rm d}x
\end{align}
for all $t\in(0,T_{max,\varepsilon})$, where
\begin{align*}
f_{1}=\|f\|_{L^\infty(0,\|c_{0}\|_{L^\infty(\Omega)})}.
\end{align*}
\end{lem}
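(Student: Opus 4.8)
The plan is to derive (\ref{differential inequality about nabla c 2q}) by testing a pointwise identity for $\partial_t|\nabla c_\varepsilon|^2$ against the weight $|\nabla c_\varepsilon|^{2(q-1)}$, in the spirit of the standard $L^p$-gradient estimates for consumed chemical signals; all manipulations are permissible since $c_\varepsilon$ is a classical solution by Lemma \ref{local solvability of the regularized system}. First I would apply $\nabla$ to the second equation in (\ref{the regularized system}) and take the scalar product with $\nabla c_\varepsilon$; by the elementary identity $\nabla c_\varepsilon\cdot\nabla\Delta c_\varepsilon=\frac{1}{2}\Delta|\nabla c_\varepsilon|^2-|D^2c_\varepsilon|^2$ this gives
\begin{align*}
\frac{1}{2}\partial_t|\nabla c_\varepsilon|^2=\frac{1}{2}\Delta|\nabla c_\varepsilon|^2-|D^2c_\varepsilon|^2-\nabla c_\varepsilon\cdot\nabla(u_\varepsilon\cdot\nabla c_\varepsilon)-\nabla c_\varepsilon\cdot\nabla(n_\varepsilon f(c_\varepsilon)).
\end{align*}
Multiplying by $|\nabla c_\varepsilon|^{2(q-1)}$ and integrating over $\Omega$ produces $\frac{1}{2q}\frac{d}{dt}\int_\Omega|\nabla c_\varepsilon|^{2q}$ on the left; the Hessian term on the right is already in the good dissipative form $-\int_\Omega|\nabla c_\varepsilon|^{2(q-1)}|D^2c_\varepsilon|^2$, and the three remaining contributions I would handle one by one.

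For the diffusion contribution $\frac{1}{2}\int_\Omega|\nabla c_\varepsilon|^{2(q-1)}\Delta|\nabla c_\varepsilon|^2$, an integration by parts produces $-\frac{q-1}{2}\int_\Omega|\nabla c_\varepsilon|^{2(q-2)}\left|\nabla|\nabla c_\varepsilon|^2\right|^2$ together with the boundary integral $\frac{1}{2}\int_{\partial\Omega}|\nabla c_\varepsilon|^{2(q-1)}\partial_\nu|\nabla c_\varepsilon|^2$, and here the convexity of $\Omega$ enters decisively: combined with $\nabla c_\varepsilon\cdot\nu=0$ on $\partial\Omega$ it gives the well-known pointwise inequality $\partial_\nu|\nabla c_\varepsilon|^2\le 0$ on $\partial\Omega$, so the boundary integral is nonpositive and may be dropped. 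For the convection term I would use the pointwise decomposition $\nabla c_\varepsilon\cdot\nabla(u_\varepsilon\cdot\nabla c_\varepsilon)=\nabla c_\varepsilon\cdot(\nabla u_\varepsilon)\nabla c_\varepsilon+\frac{1}{2}u_\varepsilon\cdot\nabla|\nabla c_\varepsilon|^2$: the second piece integrates to zero after one more integration by parts since $\nabla\cdot u_\varepsilon=0$ in $\Omega$ and $u_\varepsilon=0$ on $\partial\Omega$, while the first is pointwise dominated by $|Du_\varepsilon|\,|\nabla c_\varepsilon|^2$ and thus produces the term $\int_\Omega|\nabla c_\varepsilon|^{2q}|Du_\varepsilon|$.

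The consumption term is where the real work lies. Integrating $-\int_\Omega|\nabla c_\varepsilon|^{2(q-1)}\nabla c_\varepsilon\cdot\nabla(n_\varepsilon f(c_\varepsilon))$ by parts, the boundary term again vanishing by the Neumann condition for $c_\varepsilon$, rewrites it as
\begin{align*}
\int_\Omega n_\varepsilon f(c_\varepsilon)\left(|\nabla c_\varepsilon|^{2(q-1)}\Delta c_\varepsilon+(q-1)|\nabla c_\varepsilon|^{2(q-2)}\nabla c_\varepsilon\cdot\nabla|\nabla c_\varepsilon|^2\right).
\end{align*}
Then I would invoke $0\le f(c_\varepsilon)\le f_1$ (legitimate by (\ref{L^infty boundedness of c varepsilon})) together with the pointwise bounds $|\Delta c_\varepsilon|\le\sqrt{3}\,|D^2c_\varepsilon|$ and $\left|\nabla|\nabla c_\varepsilon|^2\right|\le 2|\nabla c_\varepsilon|\,|D^2c_\varepsilon|$ to dominate this by a constant multiple of $f_1\int_\Omega n_\varepsilon|\nabla c_\varepsilon|^{2(q-1)}|D^2c_\varepsilon|$, and finally apply Young's inequality, distributing it between small fractions of the two dissipation integrals $\int_\Omega|\nabla c_\varepsilon|^{2(q-1)}|D^2c_\varepsilon|^2$ and $\int_\Omega|\nabla c_\varepsilon|^{2(q-2)}\left|\nabla|\nabla c_\varepsilon|^2\right|^2$ — the parameters being chosen so that the coefficients $\frac{1}{4}$ and $\frac{3(q-1)}{8}$ remain on the left-hand side — and a controlled multiple of $f_1^2\int_\Omega n_\varepsilon^2|\nabla c_\varepsilon|^{2q-2}$ on the right. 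Assembling the four contributions yields (\ref{differential inequality about nabla c 2q}). I expect this last balancing step to be the main obstacle: one has to retain strictly positive portions of \emph{both} dissipation-type integrals (they are needed afterwards to absorb the chemotactic cross-term in the estimate for $n_\varepsilon$) while keeping the constant in front of $\int_\Omega n_\varepsilon^2|\nabla c_\varepsilon|^{2q-2}$ under control, and it is precisely the convexity-driven sign of the boundary term that makes the diffusion integral available for this.
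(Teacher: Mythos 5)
The paper does not actually prove this lemma: the entire ``proof'' consists of the citation ``The omitted detail computation of this lemma can be seen in \cite{MR3426095}.'' So there is no in-paper argument to compare against; the relevant comparison is with Winkler's computation in that reference, and your sketch follows precisely that route.

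Your outline is the standard one and is correct in all essentials: you apply $\nabla$ to the $c_\varepsilon$-equation, use the pointwise identity $\nabla c_\varepsilon\cdot\nabla\Delta c_\varepsilon=\tfrac12\Delta|\nabla c_\varepsilon|^2-|D^2c_\varepsilon|^2$, test against $|\nabla c_\varepsilon|^{2(q-1)}$, integrate by parts, invoke the convexity of $\Omega$ together with the homogeneous Neumann condition to discard the boundary contribution (this is exactly where convexity enters this paper), split the convection term via $\nabla c_\varepsilon\cdot\nabla(u_\varepsilon\cdot\nabla c_\varepsilon)=\nabla c_\varepsilon\cdot(\nabla u_\varepsilon)\nabla c_\varepsilon+\tfrac12 u_\varepsilon\cdot\nabla|\nabla c_\varepsilon|^2$ and kill the second piece using $\nabla\cdot u_\varepsilon=0$ and $u_\varepsilon|_{\partial\Omega}=0$, integrate the consumption term by parts, and close with $|\Delta c_\varepsilon|\le\sqrt{3}\,|D^2c_\varepsilon|$, $|\nabla|\nabla c_\varepsilon|^2|\le 2|\nabla c_\varepsilon|\,|D^2c_\varepsilon|$, $0\le f(c_\varepsilon)\le f_1$, and Young. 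All of these steps are legitimate for the classical solution supplied by Lemma \ref{local solvability of the regularized system}.

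Two remarks worth recording. First, carrying your computation through gives the sharper coefficient $1$ rather than $2q+1$ in front of $\int_\Omega|\nabla c_\varepsilon|^{2q}|Du_\varepsilon|$; the lemma as stated is simply loose there, which is harmless. Second, the constant $(2-3+\sqrt{3})^2$ printed in the statement is evidently a misprint (in the cited source it has the form $(2q-3+\sqrt{3})^2$); your derivation naturally produces a constant of the shape $(\sqrt{3}+2(q-1))^2$ (or a comparable $q$-dependent expression, depending on how Young's inequality is distributed between the two dissipation integrals), and working it out explicitly would make this typo visible. The one step you flagged as potentially delicate, namely retaining the coefficients $\tfrac14$ and $\tfrac{3(q-1)}{8}$ on the left, is in fact unproblematic: the full Hessian dissipation already suffices to absorb the consumption term, and the $|\nabla|\nabla c_\varepsilon|^2|^2$ dissipation produced by the integration by parts comes with the larger coefficient $\tfrac{q-1}{2}\ge\tfrac{3(q-1)}{8}$, so there is slack to spare.
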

\begin{proof}
The omitted detail computation of this lemma can be seen in \cite{MR3426095}.
\end{proof}
\vskip 2mm
\begin{lem}\label{lem:estimate to u varepsilon}
For any $\varepsilon\in(0,1)$. Let $m\geq 1$ and suppose that $p>1$ satisfies
\begin{align}\label{p and m relation}
p>\frac{7-3m}{3}.
\end{align}
Then, we have
\begin{align}\label{estimate to u varepsilon}
\frac{{\rm d}}{{\rm d}t}\int_\Omega|A^{\frac{1}{2}}u_\varepsilon|^2{\rm d}x +\int_\Omega|Au_\varepsilon|^2{\rm d}x\leq\eta\int_\Omega|\nabla n_\varepsilon^{\frac{m+p-1}{2}}|^2{\rm d}x+C_2\ \ \ \mathrm{for\ all}\ t\in(0,T_{max,\varepsilon}),
\end{align}
for each $\eta>0$ and some positive constant $C_2=(p,m,\eta)$.
\end{lem}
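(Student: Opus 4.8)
The plan is to test the Helmholtz-projected Stokes equation with $Au_\varepsilon$ and then to reduce the whole estimate to an interpolation bound for $\int_\Omega n_\varepsilon^2$, the structural condition (\ref{p and m relation}) being exactly what makes that bound close. Concretely, applying the Helmholtz--Leray projection $\mathcal P$ to the third equation of (\ref{the regularized system}) and using that $u_\varepsilon$ is solenoidal and vanishes on $\partial\Omega$ gives $u_{\varepsilon t}=-Au_\varepsilon+\mathcal P(n_\varepsilon\nabla\phi)$; taking the $L^2(\Omega)$ inner product with $Au_\varepsilon$ — which is admissible since $u_0\in D(A^\alpha)$ with $\alpha\in(\tfrac34,1)$ together with the smoothing of the Stokes semigroup makes $t\mapsto A^{1/2}u_\varepsilon(t)$ continuously differentiable into $L^2(\Omega)$ on $(0,T_{\max,\varepsilon})$, cf.\ Lemma \ref{local solvability of the regularized system} — yields
\[
\tfrac12\frac{{\rm d}}{{\rm d}t}\int_\Omega|A^{1/2}u_\varepsilon|^2{\rm d}x+\int_\Omega|Au_\varepsilon|^2{\rm d}x=\int_\Omega\mathcal P(n_\varepsilon\nabla\phi)\cdot Au_\varepsilon\,{\rm d}x .
\]
Since $\mathcal P$ is an orthogonal projection in $L^2$, hence a contraction, and $\phi\in W^{1,\infty}(\overline\Omega)$ by (\ref{phi condition}), Young's inequality bounds the right-hand side by $\tfrac12\int_\Omega|Au_\varepsilon|^2+\tfrac12\|\nabla\phi\|_{L^\infty(\Omega)}^2\int_\Omega n_\varepsilon^2$, so that after absorbing and multiplying by $2$ one arrives at
\[
\frac{{\rm d}}{{\rm d}t}\int_\Omega|A^{1/2}u_\varepsilon|^2{\rm d}x+\int_\Omega|Au_\varepsilon|^2{\rm d}x\le\|\nabla\phi\|_{L^\infty(\Omega)}^2\int_\Omega n_\varepsilon^2{\rm d}x ,
\]
and it remains to show $\int_\Omega n_\varepsilon^2\le\eta\int_\Omega|\nabla(n_\varepsilon+\varepsilon)^{(m+p-1)/2}|^2+C$, the gradient term being precisely the dissipation quantity produced by Lemma \ref{boundedness of n varepsilon in L^p}.

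To obtain this, I would set $\beta:=m+p-1$ and $w_\varepsilon:=(n_\varepsilon+\varepsilon)^{\beta/2}$, so that $n_\varepsilon^2\le(n_\varepsilon+\varepsilon)^2=w_\varepsilon^{4/\beta}$ and hence $\int_\Omega n_\varepsilon^2\le\|w_\varepsilon\|_{L^{4/\beta}(\Omega)}^{4/\beta}$, while the mass identity (\ref{mass conservation of n varepsilon}) gives $\|w_\varepsilon\|_{L^{2/\beta}(\Omega)}^{2/\beta}=\int_\Omega(n_\varepsilon+\varepsilon)=\int_\Omega n_0+\varepsilon|\Omega|\le M$ with $M$ independent of $\varepsilon$. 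Applying the Gagliardo--Nirenberg inequality on the bounded three-dimensional domain $\Omega$ with exponents $\tfrac4\beta$, $2$, $\tfrac2\beta$ (in its standard form valid for fractional exponents on bounded domains), with interpolation parameter $\theta=\frac{3\beta}{6\beta-2}\in(0,1)$ — the membership being guaranteed by $\beta>\tfrac23$, which follows from (\ref{p and m relation}) — then yields
\[
\int_\Omega n_\varepsilon^2\le C\Big(\|\nabla w_\varepsilon\|_{L^2(\Omega)}^{4\theta/\beta}M^{2(1-\theta)}+M^2\Big)\le C\|\nabla w_\varepsilon\|_{L^2(\Omega)}^{4\theta/\beta}+C
\]
with $C$ depending only on $p$, $m$, $\Omega$ and $\|n_0\|_{L^1(\Omega)}$.

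The main — essentially the only — delicate point is whether this term can be absorbed into $\eta\|\nabla w_\varepsilon\|_{L^2(\Omega)}^2$. Here $\tfrac{4\theta}{\beta}=\tfrac{12}{6\beta-2}$ is strictly less than $2$ if and only if $\beta>\tfrac43$, that is, if and only if $p>\tfrac{7-3m}{3}$ — exactly hypothesis (\ref{p and m relation}). Under this condition Young's inequality yields, for each $\delta>0$, $\int_\Omega n_\varepsilon^2\le\delta\|\nabla w_\varepsilon\|_{L^2(\Omega)}^2+C(\delta)$; choosing $\delta=\eta/\|\nabla\phi\|_{L^\infty(\Omega)}^2$ (the case $\nabla\phi\equiv0$ being trivial) and substituting into the differential inequality above gives (\ref{estimate to u varepsilon}) with $C_2=\|\nabla\phi\|_{L^\infty(\Omega)}^2\,C(\delta)$. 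Everything else — the Stokes energy identity, the contractivity of $\mathcal P$, the elementary bound $n_\varepsilon\le n_\varepsilon+\varepsilon$ — is routine; condition (\ref{p and m relation}) is used solely to make this final interpolation-and-absorption step close.
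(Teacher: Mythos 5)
Your proposal follows essentially the same route as the paper: apply the Helmholtz projection, test the Stokes equation with $Au_\varepsilon$, use Young's inequality to reduce to bounding $\int_\Omega n_\varepsilon^2$, and then interpolate $\|(n_\varepsilon+\varepsilon)^{(m+p-1)/2}\|_{L^{4/(m+p-1)}}^{4/(m+p-1)}$ between $\|\nabla(n_\varepsilon+\varepsilon)^{(m+p-1)/2}\|_{L^2}$ and the $L^{2/(m+p-1)}$-norm controlled by mass conservation; your exponent $4\theta/\beta=\tfrac{12}{6\beta-2}$ agrees with the paper's $\tfrac{6}{3m+3p-4}$, and the absorption condition $\beta>\tfrac43$ is exactly (\ref{p and m relation}). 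The proof is correct and matches the paper's argument.
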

\begin{proof}
We apply the $\mathrm{Holmholz}$ projection $\mathscr{P}$ to the third equation in (\ref{the regularized system}),  and then test the resulting equation
$$\partial_tu_\varepsilon+Au_\varepsilon=\mathscr{P}(n\nabla\phi)$$
with $Au_\varepsilon$ to obtain
\begin{align}\label{estimate 1 to u varepsilon}
\frac{1}{2}\frac{{\rm d}}{{\rm d}t}\int_\Omega|A^{\frac{1}{2}}u_\varepsilon|^2{\rm d}x+\int_\Omega|Au_\varepsilon|^2{\rm d}x&=\int_\Omega Au_\varepsilon\cdot\mathscr{P}(n_\varepsilon\nabla\phi){\rm d}x\notag\\
&\leq\frac{1}{2}\int_\Omega|Au_\varepsilon|^2{\rm d}x+\frac{1}{2}\int_\Omega|(n_\varepsilon\nabla\phi)|^2{\rm d}x\ \ \ \mathrm{for\ all}\ t\in(0,T_{max,\varepsilon}).
\end{align}
Furthermore, the Gagliardo-Nirenberg inequality yields that
\begin{align}\label{estimate 2 to u varepsilon}
\int_\Omega|(n_\varepsilon\phi)|^2{\rm d}x\leq&\|\phi\|_{W^{1,\infty}(\Omega)}^{2}\left\|(n_\varepsilon+\varepsilon)^{\frac{m+p-1}{2}}\right\|_{L^{\frac{4}{m+p-1}}(\Omega)}^{\frac{4}{m+p-1}}\notag\\
\leq &C_3\|\phi\|_{W^{1,\infty}(\Omega)}^{2}\|\nabla n_\varepsilon^{\frac{m+p-1}{2}}\|_{L^2(\Omega)}^\frac{6}{3m+3p-4}\cdot\|(n_\varepsilon+\varepsilon)^{\frac{m+p-1}{2}}\|_{L^{\frac{2}{m+p-1}}(\Omega)}
^{\frac{3(3m+3p-5)}{(3m+3p-4)(m+p-1)}}\notag\\
&+\|(n_\varepsilon+\varepsilon)^{\frac{m+p-1}{2}}\|_{L^{\frac{2}{m+p-1}}(\Omega)}^{\frac{4}{m+p-1}}\notag\\
\leq&C_4\|\nabla n_\varepsilon^{\frac{m+p-1}{2}}\|_{L^2(\Omega)}^\frac{6}{3m+3p-4}+C_4\ \ \ \mathrm{for\ all}\ t\in(0,T_{max,\varepsilon}),
\end{align}
is fulfilled for some $C_3=C(p,\eta)>0$ and $C_4=C(p,\eta)>0$ due to the (\ref{mass conservation of n varepsilon}).

Additionally, by the assumption (\ref{p and m relation}), we can see that
\begin{align*}
\frac{4}{m+p-1}<6,
\end{align*}
and
\begin{align*}
\frac{6}{3m+3p-4}<2
\end{align*}
are valid. Thus, the claimed inequality (\ref{estimate to u varepsilon}) results from (\ref{estimate 1 to u varepsilon}) and (\ref{estimate 2 to u varepsilon}) by a second application of Young's inequality.
\end{proof}
\vskip 2mm
We next plan to estimate the right-hand sides in the above inequality appropriately by using suitable interpolation arguments along with the basic priori information provided by Lemma \ref{property 1 of n varepsilon and c varepsilon}. Here, we introduce an auxiliary interpolation lemma, which will play an important role in making efficient use of the known $L^\infty$ bound for $c_\varepsilon$.
\vskip2mm
\begin{lem}\label{interpolation lemma about varphi}
Suppose that $\Omega\subset\mathbb{R}^3$ is a bounded domain with smooth boundary, that $q\geq1$ and that
\begin{align}\label{lambda condition}
\lambda\in[2q+2,4q+1].
\end{align}
Then there exists $C_5>0$ such that for all $\varphi\in C^2(\overline{\Omega})$ fulfilling $\varphi\cdot\frac{\partial\varphi}{\partial\nu}=0$ on $\partial\Omega$ we have
\begin{align}\label{interpolation inequality about varphi}
\|\nabla\varphi\|_{L^\lambda(\Omega)}\leq C_5\left\||\nabla\varphi|^{q-1}D^2\varphi\right\|_{L^2(\Omega)}^{\frac{2\lambda-6}{(2q-1)\lambda}}\|\varphi\|_{L^\infty(\Omega)}
^{\frac{2\lambda-6}{(2q-1)\lambda}}+C_5\|\varphi\|_{L^\infty(\Omega)}.
\end{align}
\end{lem}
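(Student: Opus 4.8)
The plan is to combine one integration by parts (to introduce $\|\varphi\|_{L^\infty(\Omega)}$), the Cauchy--Schwarz inequality (to introduce the weighted Hessian quantity $M:=\big\||\nabla\varphi|^{q-1}D^2\varphi\big\|_{L^2(\Omega)}$), and a three-dimensional Gagliardo--Nirenberg interpolation applied to the auxiliary function $v:=|\nabla\varphi|^q$.

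\emph{Step 1 (the left endpoint $\lambda=2q+2$).} Since $2q+2\geq4$ and $\varphi\in C^2(\overline\Omega)$, the field $|\nabla\varphi|^{2q}\nabla\varphi$ is continuously differentiable on $\overline\Omega$, and the boundary integral arising below equals $\int_{\partial\Omega}\varphi\,|\nabla\varphi|^{2q}\frac{\partial\varphi}{\partial\nu}$, which vanishes by the hypothesis $\varphi\cdot\frac{\partial\varphi}{\partial\nu}=0$ on $\partial\Omega$. Hence, using $|\Delta\varphi|\leq\sqrt3\,|D^2\varphi|$,
\begin{align*}
\int_\Omega|\nabla\varphi|^{2q+2}{\rm d}x&=-\int_\Omega\varphi\Big(|\nabla\varphi|^{2q}\Delta\varphi+2q|\nabla\varphi|^{2q-2}\nabla\varphi\cdot(D^2\varphi\,\nabla\varphi)\Big){\rm d}x\\
&\leq(\sqrt3+2q)\|\varphi\|_{L^\infty(\Omega)}\int_\Omega|\nabla\varphi|^{2q}|D^2\varphi|{\rm d}x.
\end{align*}
Writing $|\nabla\varphi|^{2q}|D^2\varphi|=\big(|\nabla\varphi|^{q-1}|D^2\varphi|\big)|\nabla\varphi|^{q+1}$ and applying Cauchy--Schwarz bounds the last integral by $M\,\big(\int_\Omega|\nabla\varphi|^{2q+2}{\rm d}x\big)^{1/2}$; absorbing this factor on the left gives
\begin{align}\label{prop endpoint estimate}
\|\nabla\varphi\|_{L^{2q+2}(\Omega)}\leq C\big(\|\varphi\|_{L^\infty(\Omega)}\,M\big)^{\frac1{q+1}},
\end{align}
and since $\frac{2\lambda-6}{(2q-1)\lambda}=\frac1{q+1}$ at $\lambda=2q+2$, this already yields (\ref{interpolation inequality about varphi}) at the left endpoint.

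\emph{Step 2 (interpolation to $\lambda\in[2q+2,4q+1]$).} For $q\geq1$ the function $v=|\nabla\varphi|^q$ belongs to $W^{1,\infty}(\Omega)\subset W^{1,2}(\Omega)$ with $|\nabla v|\leq q|\nabla\varphi|^{q-1}|D^2\varphi|$ a.e., so $\|\nabla v\|_{L^2(\Omega)}\leq qM$; moreover $\|\nabla\varphi\|_{L^\lambda(\Omega)}=\|v\|_{L^{\lambda/q}(\Omega)}^{1/q}$, and (\ref{prop endpoint estimate}) gives $\|v\|_{L^{(2q+2)/q}(\Omega)}\leq C(\|\varphi\|_{L^\infty(\Omega)}M)^{q/(q+1)}$. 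By the Gagliardo--Nirenberg inequality in $\mathbb{R}^3$,
\begin{align*}
\|v\|_{L^{\lambda/q}(\Omega)}\leq C\|\nabla v\|_{L^2(\Omega)}^\theta\,\|v\|_{L^{(2q+2)/q}(\Omega)}^{1-\theta}+C\|v\|_{L^{(2q+2)/q}(\Omega)},\qquad\theta=\frac{3q(\lambda-2q-2)}{(2q-1)\lambda},
\end{align*}
and a direct check shows that $\theta\in[0,1]$ together with $\lambda/q\leq6$ holds exactly when $2q+2\leq\lambda\leq6q$, a range that contains $[2q+2,4q+1]$. Inserting the two displayed bounds, raising to the power $1/q$, and controlling the cross term by Young's inequality produces an estimate of the form (\ref{interpolation inequality about varphi}); in particular $\frac{\theta}{q}+\frac{1-\theta}{q+1}=\frac{2\lambda-6}{(2q-1)\lambda}$, which is the exponent of $M$ claimed there, while the remaining lower-order contributions collapse into the additive term $C_5\|\varphi\|_{L^\infty(\Omega)}$.

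The routine parts are the two differentiations under the integral in Step 1 and the elementary inequalities verifying $\theta\in[0,1]$. The step that needs care is the exponent bookkeeping in Step 2: it is precisely the interplay of the target exponent $\lambda/q$, the Sobolev exponent $6$ in three dimensions, and the auxiliary exponent $(2q+2)/q$ inherited from the endpoint estimate that pins down the admissible window $\lambda\in[2q+2,4q+1]$; one must also keep the $L^\infty$-norm isolated as a stand-alone summand so that, in the later applications, the first term on the right of (\ref{interpolation inequality about varphi}) can be absorbed against the dissipation furnished by Lemmas \ref{boundedness of n varepsilon in L^p} and \ref{difference of nabla c varepsilon}.
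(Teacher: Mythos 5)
Your two-step strategy is sound and does establish an interpolation inequality of the right shape: the endpoint computation in Step~1 is exactly Winkler's integration-by-parts identity (the boundary term vanishes by the hypothesis $\varphi\,\partial_\nu\varphi=0$, and $|\Delta\varphi|\le\sqrt3\,|D^2\varphi|$ gives (\ref{prop endpoint estimate})), and the Gagliardo--Nirenberg step applied to $v=|\nabla\varphi|^q$ is a legitimate way to move up from $\lambda=2q+2$. The exponent bookkeeping for $M$ is correct: one checks that $\frac{\theta}{q}+\frac{1-\theta}{q+1}=\frac{\theta+q}{q(q+1)}=\frac{2\lambda-6}{(2q-1)\lambda}$ with $\theta=\frac{3q(\lambda-2q-2)}{(2q-1)\lambda}$, and the admissible range $\theta\in[0,1]$, $\lambda/q\le6$ yields $2q+2\le\lambda\le6q$, which contains $[2q+2,4q+1]$.

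There is, however, a genuine discrepancy with the statement as printed that you gloss over with ``the remaining lower-order contributions collapse.'' Tracking the exponent of $\|\varphi\|_{L^\infty}$ through your Step~2, the first term comes out as $C\,M^{\frac{2\lambda-6}{(2q-1)\lambda}}\|\varphi\|_{L^\infty}^{\frac{1-\theta}{q+1}}$, and a short computation shows $\frac{1-\theta}{q+1}=\frac{6q-\lambda}{(2q-1)\lambda}$, \emph{not} $\frac{2\lambda-6}{(2q-1)\lambda}$ as the lemma asserts; the two coincide only at the endpoint $\lambda=2q+2$. In fact the printed exponent cannot be right: replacing $\varphi$ by $t\varphi$ and letting $t\to0$ would force $\|\nabla\varphi\|_{L^\lambda}\le C_5\|\varphi\|_{L^\infty}$ for every admissible $\varphi$ whenever $\lambda>2q+2$, which is false. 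Your exponent $\frac{6q-\lambda}{(2q-1)\lambda}$ is the unique one that makes the product $M^{\sigma}\|\varphi\|_{L^\infty}^{b}$ homogeneous of degree one under $\varphi\mapsto t\varphi$ (since $q\sigma+b=1$), and it is the one appearing in Winkler's Lemma~3.8 of the cited reference; the version in this paper contains a typo that happens to be harmless because every subsequent application (Lemma~\ref{lem:upper bounds p} and the estimate (\ref{estimate 5 the second term})) uses only $\lambda=2q+2$. Finally, the residual term $C\|v\|_{L^{(2q+2)/q}(\Omega)}^{1/q}\le C(M\|\varphi\|_{L^\infty})^{1/(q+1)}$ does not literally ``collapse'' into $C_5\|\varphi\|_{L^\infty}$; you need one more Young step, writing $(M\|\varphi\|_{L^\infty})^{\frac{1}{q+1}}=\bigl(M^{\sigma}\|\varphi\|_{L^\infty}^{1-q\sigma}\bigr)^{\frac{1}{(q+1)\sigma}}\|\varphi\|_{L^\infty}^{1-\frac{1}{(q+1)\sigma}}$ and using $(q+1)\sigma\ge1$, which again works only with the corrected $L^\infty$-exponent. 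Spell these two points out and your proof is complete, and in fact valid on the slightly larger range $\lambda\in[2q+2,6q]$.
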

\begin{proof}
The proof of this lemma can be found in \cite{MR3426095} for details.
\end{proof}
\vskip 2mm
The term on the right-hand of Lemma \ref{boundedness of n varepsilon in L^p} can be estimated as follows.
\begin{lem}\label{lem:upper bounds p}
Let $m\geq l-1$, $q>1$. Assume that $(n_\varepsilon, c_\varepsilon, u_\varepsilon,P_\varepsilon)$ is a solution to (\ref{the regularized system}) on $[0,T_{max,\varepsilon})$. Suppose that $p>\max\{1,m-2l+3\}$ satisfies
\begin{align}\label{upper bound of p}
p<\left(2m-2l+\frac{8}{3}\right)q+m-2l+3.
\end{align}
Then for all $\eta>0$ there exists $C_6=C(p,q,\eta)>0$ with the property that for all $\varepsilon\in(0,1)$,
\begin{align}\label{estimate 4}
\int_{\Omega}(n_\varepsilon+\varepsilon)^{p+2l-m-3}|\nabla c_{\varepsilon}|^{2}{\rm d}x\leq\eta\int_\Omega|\nabla(n_\varepsilon+\varepsilon)^{\frac{m+p-1}{2}}|^2{\rm d}x+\eta\int_{\Omega}|\nabla c_\varepsilon|^{2q-2}|D^2 c_\varepsilon|^2{\rm d} x +C_6
\end{align}
for all $t\in(0,T_{max,\varepsilon})$
\end{lem}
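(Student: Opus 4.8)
The goal is to bound the right-hand side of Lemma~\ref{boundedness of n varepsilon in L^p}, i.e.\ the left-hand side of \eqref{estimate 4}, by the two ``good'' dissipation integrals $X:=\int_\Omega|\nabla(n_\varepsilon+\varepsilon)^{\frac{m+p-1}{2}}|^2\,{\rm d}x$ and $Y:=\int_\Omega|\nabla c_\varepsilon|^{2q-2}|D^2c_\varepsilon|^2\,{\rm d}x$, which appear on the left-hand sides of Lemma~\ref{boundedness of n varepsilon in L^p} and Lemma~\ref{difference of nabla c varepsilon}, so that those differential inequalities can afterwards be coupled into a single one. Since $p>m-2l+3$ makes $p+2l-m-3>0$, the plan is to start from a H\"older split with conjugate exponents $\theta,\theta'$, where $\theta'$ is picked so that $\lambda:=2\theta'$ falls into the range $[2q+2,4q+1]$ required in Lemma~\ref{interpolation lemma about varphi}; this keeps $\lambda$ (equivalently $\theta'$) as a single free parameter, to be fixed only at the very end, and yields
\[
\int_\Omega(n_\varepsilon+\varepsilon)^{p+2l-m-3}|\nabla c_\varepsilon|^2\,{\rm d}x\le\Big(\int_\Omega(n_\varepsilon+\varepsilon)^{(p+2l-m-3)\theta}\,{\rm d}x\Big)^{\frac1\theta}\Big(\int_\Omega|\nabla c_\varepsilon|^{\lambda}\,{\rm d}x\Big)^{\frac2\lambda}.
\]

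I would then treat the two factors separately. For the factor $\|\nabla c_\varepsilon\|_{L^\lambda(\Omega)}^2$, apply Lemma~\ref{interpolation lemma about varphi} with $\varphi=c_\varepsilon$ (admissible since $\nabla c_\varepsilon\cdot\nu=0$ on $\partial\Omega$), square the resulting inequality, and use $\|c_\varepsilon\|_{L^\infty(\Omega)}\le\|c_0\|_{L^\infty(\Omega)}$ from Lemma~\ref{property 1 of n varepsilon and c varepsilon}; this produces $C\,Y^{\kappa}+C$ with the explicit exponent $\kappa=\frac{2\lambda-6}{(2q-1)\lambda}$, which is easily seen to satisfy $\kappa\in(0,1)$ in the whole parameter range. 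For the factor involving $n_\varepsilon$, set $z:=(n_\varepsilon+\varepsilon)^{\frac{m+p-1}{2}}$, rewrite $\big(\int_\Omega(n_\varepsilon+\varepsilon)^{(p+2l-m-3)\theta}\big)^{1/\theta}$ as a power of $\|z\|_{L^{s}(\Omega)}$ for the corresponding $s$, and interpolate it by Gagliardo--Nirenberg between $\|\nabla z\|_{L^2(\Omega)}$ and $\|z\|_{L^{2/(m+p-1)}(\Omega)}$, the latter being bounded because $\|z\|_{L^{2/(m+p-1)}(\Omega)}^{2/(m+p-1)}=\int_\Omega(n_\varepsilon+\varepsilon)\le\int_\Omega n_0+|\Omega|$ by the mass identity \eqref{mass conservation of n varepsilon}. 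This turns the factor into $C\,X^{\mu}+C$ with an explicit exponent $\mu=\mu(p,m,l,\lambda)$. Multiplying the two one-variable bounds and expanding, the left-hand side of \eqref{estimate 4} is dominated by a constant times $X^{\mu}Y^{\kappa}+X^{\mu}+Y^{\kappa}+1$; since $\mu,\kappa\ge0$, Young's inequality then gives \eqref{estimate 4} as soon as $\mu+\kappa<1$.

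What remains is to exhibit a choice of $\lambda\in[2q+2,4q+1]$ for which $\mu+\kappa<1$, while simultaneously the Gagliardo--Nirenberg step stays admissible -- here one may have to separate the subcase in which $s$ does not exceed the mass exponent $2/(m+p-1)$, so that the first factor is already bounded directly by \eqref{mass conservation of n varepsilon} and the assertion is immediate, and otherwise to check $s\le6$ and that the interpolation exponent lies in $(0,1)$, which is where $m\ge l-1$ (together with $p>\max\{1,m-2l+3\}$) enters. I expect this last part to be the main obstacle: one must write $\mu$ and $\kappa$ out in closed form, determine the value of $\lambda$ that minimises $\mu+\kappa$ over the admissible interval, and then reduce the strict inequality $\mu+\kappa<1$ to exactly the hypothesis \eqref{upper bound of p}, $p<\big(2m-2l+\tfrac83\big)q+m-2l+3$. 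The coefficient $2m-2l$ and the constant $\tfrac83$ there should appear through the $3$-dimensional Gagliardo--Nirenberg scaling relation combined with the weights $p+2l-m-3$ and $m+p-1$; this computation is elementary but delicate, and is the only genuinely technical ingredient of the argument.
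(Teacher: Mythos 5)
Your proposal follows essentially the same route as the paper: the H\"older split with exponents $\frac{q+1}{q}$ and $q+1$, Lemma \ref{interpolation lemma about varphi} applied to $c_\varepsilon$ for the gradient factor, Gagliardo--Nirenberg combined with the mass identity \eqref{mass conservation of n varepsilon} for the $n_\varepsilon$ factor, and Young's inequality to close under the condition $\mu+\kappa<1$. The only difference is that the paper fixes $\lambda=2q+2$ (the left endpoint of the admissible range) from the start, for which $\kappa=\frac{1}{q+1}$ and the inequality $\mu+\kappa<1$ reduces exactly to \eqref{upper bound of p}, so the optimisation over $\lambda$ you anticipate is unnecessary.
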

\begin{proof}
For all $t\in(0,T_{max,\varepsilon})$, we apply the $\mathrm{H\ddot{o}lder}$ inequality with exponents $\frac{q+1}{q}$ and $q+1$ to obtain
\begin{align}\label{estimate 5}
\int_\Omega n_\varepsilon^{p+2l-m-3}|\nabla c_\varepsilon|^2{\rm d}x&\leq\left(\int_\Omega n_\varepsilon^{\frac{(p+2l-m-3)(q+1)}{q}}{\rm d}x\right)^{\frac{q}{q+1}}\left(\int_\Omega|\nabla c_\varepsilon|^{2q+2}{\rm d}x\right)^{\frac{1}{q+1}}\notag\\
&=\left\|n_\varepsilon^{\frac{m+p-1}{2}}\right\|_{L^{\frac{2(p+2l-m-3)(q+1)}{(p+m-1)q}}(\Omega)}^{\frac{2(p+2l-m-3)}{p+m-1}}\cdot\left\|\nabla c_{\varepsilon}\right\|_{L^{2q+2}(\Omega)}^{2}.
\end{align}
By applying the Gagliardo-Nirenberg inequality and the mass conservation of $n_\varepsilon$ (\ref{mass conservation of n varepsilon}), there exist positive constants $C_7=C(p,q)$ and $C_8=C(p,q)$ satisfying
\begin{align}\label{estimate 5 the first term}
\left\|n_\varepsilon^{\frac{m+p-1}{2}}\right\|_{L^{\frac{2(p+2l-m-3)(q+1)}{(p+m-1)q}}(\Omega)}^{\frac{2(p+2l-m-3)}{p+m-1}}\leq & C_7\left\|\nabla n_\varepsilon^{\frac{m+p-1}{2}}\right\|_{L^2(\Omega)}^{\frac{2(p+2l-m-3)}{p+m-1}\alpha}
\left\|n_\varepsilon^{\frac{m+p-1}{2}}\right\|_{L^{\frac{2}{(p+m-1)}}(\Omega)}^{\frac{2(p+2l-m-3)}{p+m-1}(1-\alpha)}\notag\\
&+\left\|n_\varepsilon^{\frac{m+p-1}{2}}\right\|_{L^{\frac{2}{(p+m-1)}}(\Omega)}^{\frac{2(p+2l-m-3)}{p+m-1}}\notag\\
\leq& C_8\left\|\nabla n_\varepsilon^{\frac{m+p-1}{2}}\right\|_{L^2(\Omega)}^{\frac{2(p+2l-m-3)}{p+m-1}\alpha}+C_8\ \ \mathrm{for\ all}\ t\in(0,T_{max,\varepsilon}),
\end{align}
while $\alpha\in(0,1)$ is determined by
\begin{align}\label{alpha dereremine identity}
\frac{(p+m-1)q}{2(p+2l-m-3)(q+1)}=\left(\frac{1}{2}-\frac{1}{3}\right)\cdot\alpha+\frac{2}{m+p-1}(1-\alpha),
\end{align}
and
\begin{align}\label{condition 1 for G-N}
\frac{2(p+2l-m-3)(q+1)}{(m+p-1)q}<6
\end{align}
is needed due to the embedding $W^{1,2}(\Omega)\hookrightarrow L^{6}(\Omega)$. Because of $m>l-1$ and $p>m-2l+3$, we can see that
\begin{align*}
&6(m+p-1)q-2(p-m+2l-3)(q+1)\\
=&2(p-m+2l-3)(2q-1)+12(m-l+1)q>0,
\end{align*}
which indicates
\begin{align*}
\frac{2(p+2l-m-3)(q+1)}{(m+p-1)q}-6<0,
\end{align*}
then (\ref{condition 1 for G-N}) is satisfied. According to the identity (\ref{alpha dereremine identity}), we have
\begin{align*}
\alpha :=\frac{3(m+p-1)}{3m+3p-4}\left(1-\frac{q}{(p+2l-m-3)(q+1)}\right)\in(0,1).
\end{align*}
Then, by inserting $\alpha$ into (\ref{estimate 5 the first term}) we see that
\begin{align}\label{estimate 5 details}
\left\|n_\varepsilon^{\frac{m+p-1}{2}}\right\|_{L^{\frac{2(p+2l-m-3)(q+1)}{(p+m-1)q}}(\Omega)}^{\frac{2(p+2l-m-3)}{p+m-1}}\leq C_9\left(\int_\Omega\left|\nabla n_\varepsilon^{\frac{m+p-1}{2}}\right|^{2}{\rm d}x+1\right)^{\frac{3[(p-m+2l-3)(q+1)-q]}{(3m+3p-4)(q+1)}}
\end{align}
for some $C_9=C(p,q)>0$.

Next, we apply the Lemma \ref{interpolation lemma about varphi} to estimate $\|\nabla c_\varepsilon\|_{L^{2q+2}(\Omega)}^{2}$ as follows:
\begin{align}\label{estimate 5 the second term}
\|\nabla c_\varepsilon\|_{L^{2q+2}(\Omega)}^{2}&\leq C_{10}\||\nabla c_\varepsilon|^{q-1}D^2c_\varepsilon\|_{L^2(\Omega)}^{\frac{2}{q+1}}\|c_\varepsilon\|_{L^\infty(\Omega)}^{\frac{2}{q+1}}
+\|c_\varepsilon\|_{L^\infty(\Omega)}^{2}\notag\\
&\leq C_{11}\left(\int_\Omega|\nabla c_\varepsilon|^{2q-2}|D^2c_\varepsilon|^2{\rm d}x+1\right)^{\frac{1}{q+1}}\ \ \ \mathrm{for\ all}\ t\in(0,T_{max,\varepsilon}).
\end{align}
for some positive constants $C_{10}=C(p,q)$ and $C_{11}=C(p,q)$ by choosing $\lambda=2q+2$ in (\ref{interpolation inequality about varphi}).

Thus, combining with (\ref{estimate 5}), (\ref{estimate 5 the first term}) and (\ref{estimate 5 the second term}) and employing the Young inequality, we can find  that
\begin{align}\label{estimate 3 summary 1}
\int_\Omega(n_\varepsilon+\varepsilon)^{p+2l-m-3}|\nabla c_\varepsilon|^2{\rm d}x\leq &C_{10}C_{11}\left(\int_\Omega\left|\nabla n_\varepsilon^{\frac{m+p-1}{2}}\right|^{2}{\rm d}x+1\right)^{\frac{3[(p-m+2l-3)(q+1)-q]}{(3m+3p-4)(q+1)}}\times\notag\\&\quad\quad\left(\int_\Omega|\nabla c_\varepsilon|^{2q-2}|D^2c_\varepsilon|^2{\rm d}x+1\right)^{\frac{1}{q+1}}\notag\\
\leq& \eta\left(\int_\Omega|\nabla c_\varepsilon|^{2q-2}|D^2c_\varepsilon|^2{\rm d}x+1\right)+\notag\\
&\quad\quad C_{12}\left(\int_\Omega\left|\nabla n_\varepsilon^{\frac{m+p-1}{2}}\right|^{2}{\rm d}x+1\right)^{\frac{3[(p-m+2l-3)(q+1)-q]}{q(3m+3p-4)}}
\end{align}
for all $t\in(0,T_{max,\varepsilon})$ and $\eta>0$, where the positive constant $C_6$ is related to $\eta$, $p$ and $q$. Otherwise, by (\ref{upper bound of p}), we can obtain that
\begin{align*}
&3\left[(p+2l-m-3)(q+1)-q\right]-q(3m+3p-4)\\
=&3[p-(2m-2l+\frac{8}{3})q+(p+2l-3)]>0,
\end{align*}
which implies that
$$\frac{3[(p-m+2l-3)(q+1)-q]}{q(3m+3p-4)}<1.$$
Thus, we can employ the Young inequality again to the second term on right-hand side in (\ref{estimate 3 summary 1}) to finished the proof of this lemma finally.
\end{proof}
\vskip 3mm

We are in position to estimate the terms on the right-hand side in (\ref{differential inequality about nabla c 2q}). The following three lemmas are cited from  lemma 3.10, lemma 3..11 and lemma 3.12 in \cite{MR3426095} respectively, the proof details of which are omitted here.
\begin{lem}\label{lem:below bounds of p}
Let $m\geq 1$ and $q>1$. Assume that $p>1$ satisfies
\begin{align}\label{below bound of p}
p>\frac{3q-3m+4}{3}.
\end{align}
Then, for each $\eta>0$ and a positive constant $C_{13}=C(p,q,\eta)$, the classical solution to the system (\ref{the regularized system}) have the property
\begin{align}\label{estimate 6}
\int_\Omega n_\varepsilon^2|\nabla c_\varepsilon|^{2q-2}{\rm d}x\leq\eta\int_\Omega\left|\nabla n_\varepsilon^{\frac{m+p-1}{2}}\right|^2{\rm d}x +\eta\int_\Omega\left|\nabla c_\varepsilon|^{2q-2}\right|D^2c_\varepsilon|^2{\rm d}x+C_{13}\ \ \ \mathrm{for\ all}\ t\in(0,T_{max,\varepsilon}).
\end{align}

\end{lem}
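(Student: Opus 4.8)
\textbf{Proof proposal for Lemma~\ref{lem:below bounds of p}.}
The plan is to control $\int_\Omega n_\varepsilon^2|\nabla c_\varepsilon|^{2q-2}\,{\rm d}x$ by splitting the two factors via H\"older's inequality and then interpolating each factor against the \emph{a priori} information available, namely the mass bound $\|n_\varepsilon(\cdot,t)\|_{L^1(\Omega)}=\|n_0\|_{L^1(\Omega)}$ from \eqref{mass conservation of n varepsilon} together with the gradient term $\|\nabla n_\varepsilon^{(m+p-1)/2}\|_{L^2(\Omega)}$, and the $L^\infty$ bound $\|c_\varepsilon(\cdot,t)\|_{L^\infty(\Omega)}\leq\|c_0\|_{L^\infty(\Omega)}$ from \eqref{L^infty boundedness of c varepsilon} together with the curvature term $\||\nabla c_\varepsilon|^{q-1}D^2 c_\varepsilon\|_{L^2(\Omega)}$. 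Concretely, I would first write, for a H\"older exponent pair $(\theta,\theta')$ to be chosen,
\begin{align*}
\int_\Omega n_\varepsilon^2|\nabla c_\varepsilon|^{2q-2}\,{\rm d}x
=\int_\Omega n_\varepsilon^2|\nabla c_\varepsilon|^{2q-2}\,{\rm d}x
\leq\Big(\int_\Omega n_\varepsilon^{2\theta}\,{\rm d}x\Big)^{1/\theta}\Big(\int_\Omega|\nabla c_\varepsilon|^{(2q-2)\theta'}\,{\rm d}x\Big)^{1/\theta'},
\end{align*}
and then rewrite $\int_\Omega n_\varepsilon^{2\theta}\,{\rm d}x=\|n_\varepsilon^{(m+p-1)/2}\|_{L^{4\theta/(m+p-1)}(\Omega)}^{4\theta/(m+p-1)}$ so the Gagliardo--Nirenberg inequality applies with base space $L^{2/(m+p-1)}$ (matched to the $L^1$ mass bound) and smoothing space $W^{1,2}$, exactly as in the proof of Lemma~\ref{lem:upper bounds p}. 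For the gradient factor I would apply Lemma~\ref{interpolation lemma about varphi} with $\varphi=c_\varepsilon$ and $\lambda=(2q-2)\theta'$, which requires $\lambda\in[2q+2,4q+1]$; this is where the lower bound \eqref{below bound of p} on $p$ enters, since it forces $\theta'$ into the admissible range.

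The heart of the computation is a bookkeeping argument on exponents. After the two interpolations one obtains a product of the form
\begin{align*}
\int_\Omega n_\varepsilon^2|\nabla c_\varepsilon|^{2q-2}\,{\rm d}x
\leq C\Big(\int_\Omega|\nabla n_\varepsilon^{\frac{m+p-1}{2}}|^2\,{\rm d}x+1\Big)^{a}
\Big(\int_\Omega|\nabla c_\varepsilon|^{2q-2}|D^2 c_\varepsilon|^2\,{\rm d}x+1\Big)^{b}
\end{align*}
for explicit exponents $a=a(p,q,m)$ and $b=b(p,q,m)$ coming from the Gagliardo--Nirenberg interpolation parameter and from the exponent $(2\lambda-6)/((2q-1)\lambda)$ in Lemma~\ref{interpolation lemma about varphi}. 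I would then check that the scaling condition \eqref{below bound of p}, $p>\frac{3q-3m+4}{3}$, is precisely equivalent to $a+b<1$ (the same structural feature that drove \eqref{upper bound of p} in Lemma~\ref{lem:upper bounds p}); granting this, two successive applications of Young's inequality — first to separate the product into a sum of two powers, then to absorb each power below $1$ into the corresponding linear term plus a constant — yield \eqref{estimate 6} with the prefactors $\eta$ as small as desired and a remaining constant $C_{13}=C(p,q,\eta)$.

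The main obstacle is the exponent arithmetic: one must verify that the choices of $\theta,\theta'$ (equivalently $\lambda$) can be made simultaneously compatible with the constraint $\lambda\in[2q+2,4q+1]$ from Lemma~\ref{interpolation lemma about varphi}, with the upper-integrability constraint $4\theta/(m+p-1)<6$ (respectively $(2q-2)\theta'<\infty$ in three dimensions) needed for the embedding $W^{1,2}\hookrightarrow L^6$, and with the final requirement $a+b<1$. Since these are linear inequalities in $p$ once $q$ and $m$ are fixed, the claim is that their intersection is nonempty exactly under \eqref{below bound of p}; confirming this reduces to the algebra already indicated. Everything else — the Gagliardo--Nirenberg estimates, the use of \eqref{mass conservation of n varepsilon} and \eqref{L^infty boundedness of c varepsilon} to discard the lower-order interpolation factors, and the two Young steps — is routine and parallels Lemma~\ref{lem:upper bounds p} verbatim, so I would simply cite \cite{MR3426095} for the detailed computation as the statement of the lemma already does.
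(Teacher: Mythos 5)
Your proposal is correct and follows essentially the same route as the paper's (cited) proof: the paper omits the details and refers to Winkler's Lemma~3.10 in \cite{MR3426095}, which is exactly the H\"older split into an $n_\varepsilon$-factor and a $\nabla c_\varepsilon$-factor, Gagliardo--Nirenberg interpolation of $n_\varepsilon^{(m+p-1)/2}$ between the $L^1$ mass bound and $W^{1,2}$, the interpolation inequality of Lemma~\ref{interpolation lemma about varphi} for the gradient factor, and then Young's inequality to absorb both powers once $a+b<1$, mirroring the proof of Lemma~\ref{lem:upper bounds p}. One small correction to your bookkeeping: the admissibility constraint $\lambda\in[2q+2,4q+1]$ does \emph{not} involve $p$ at all — taking $\theta'=\frac{q+1}{q-1}$ (so $\theta=\frac{q+1}{2}$) gives $\lambda=2q+2$, the left endpoint, for any $p$; the hypothesis $p>\frac{3q-3m+4}{3}$ enters only, and indeed exactly, in the final step $a+b<1$, where with $a=\frac{6q}{(q+1)(3m+3p-4)}$ and $b=\frac{q-1}{q+1}$ the inequality $a+b<1$ is equivalent to $3p>3q-3m+4$ (the weaker Gagliardo--Nirenberg validity requirement $2(q+1)/(m+p-1)<6$, i.e.\ $p>\frac{q-3m+4}{3}$, is then automatic).
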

\vskip 2mm
\begin{lem}\label{lem:u varepsilon condition 1}
Let $m\geq 1$ and $r>\frac{3}{2}$ and suppose that $q\geq r-1$ is such that
\begin{align}\label{q and r relation 1}
(4-2r)q\leq r-1.
\end{align}
Then, for all $\eta>0$ and each $K>0$, there exists $C_{14}=C(q,r,\eta,K)>0$ such that if for some $\varepsilon\in(0,1)$ we have
\begin{align}\label{u varepsilon condition 1}
\|Du_\varepsilon(\cdot,t)\|_{L^r(\Omega)}\leq K\ \ \ \ \mathrm{for\ all}\ t\in(0,T_{max,\varepsilon}),
\end{align}
then
\begin{align}\label{estimate 7 in 1 condition}
\int_\Omega|\nabla c_\varepsilon|^{2q}|Du_\varepsilon|{\rm d}x\leq\eta\int_\Omega|\nabla c_\varepsilon|^{2q-2}|D^2c_\varepsilon|^2{\rm d}x +C_{14}\ \ \ \mathrm{for\ all}\ t\in(0,T_{max,\varepsilon}).
\end{align}
\end{lem}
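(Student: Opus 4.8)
## Proof Proposal for Lemma \ref{lem:u varepsilon condition 1}

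The plan is to estimate $\int_\Omega|\nabla c_\varepsilon|^{2q}|Du_\varepsilon|\,{\rm d}x$ by applying Hölder's inequality to split off the $Du_\varepsilon$ factor, then using the hypothesis (\ref{u varepsilon condition 1}) together with a Gagliardo–Nirenberg interpolation to control the remaining $\nabla c_\varepsilon$ factor in terms of the "good" quantities $\||\nabla c_\varepsilon|^{q-1}D^2c_\varepsilon\|_{L^2(\Omega)}$ and $\|c_\varepsilon\|_{L^\infty(\Omega)}$, the latter being bounded by Lemma \ref{property 1 of n varepsilon and c varepsilon}. Concretely, I would first apply Hölder with exponents $r$ and $\frac{r}{r-1}$ to get
\begin{align*}
\int_\Omega|\nabla c_\varepsilon|^{2q}|Du_\varepsilon|\,{\rm d}x\leq\|Du_\varepsilon\|_{L^r(\Omega)}\cdot\left\||\nabla c_\varepsilon|^{2q}\right\|_{L^{\frac{r}{r-1}}(\Omega)}=\|Du_\varepsilon\|_{L^r(\Omega)}\cdot\left\||\nabla c_\varepsilon|^{q}\right\|_{L^{\frac{2r}{r-1}}(\Omega)}^{2},
\end{align*}
so that by (\ref{u varepsilon condition 1}) it remains to bound $\left\||\nabla c_\varepsilon|^q\right\|_{L^{\frac{2r}{r-1}}(\Omega)}^2$.

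Next I would set $w:=|\nabla c_\varepsilon|^q$ and interpolate $\|w\|_{L^{\frac{2r}{r-1}}(\Omega)}$ between $\|\nabla w\|_{L^2(\Omega)}$ and a low $L^s$-norm of $w$. Since $|\nabla w|=q|\nabla c_\varepsilon|^{q-1}\left|\nabla|\nabla c_\varepsilon|\right|\leq q|\nabla c_\varepsilon|^{q-1}|D^2c_\varepsilon|$ pointwise, we have $\|\nabla w\|_{L^2(\Omega)}\leq q\left\||\nabla c_\varepsilon|^{q-1}D^2c_\varepsilon\right\|_{L^2(\Omega)}$. For the low-order norm, the natural choice is $\|w\|_{L^{1/q}(\Omega)}=\|\nabla c_\varepsilon\|_{L^1(\Omega)}^q$, and since $\|\nabla c_\varepsilon\|_{L^1(\Omega)}\leq C\|\nabla c_\varepsilon\|_{L^2(\Omega)}$ can ultimately be controlled using the $L^\infty$-bound on $c_\varepsilon$ via an elliptic/parabolic smoothing argument — or, more in the spirit of the cited paper, one estimates the $L^2$-norm of $\nabla c_\varepsilon$ through a standard testing procedure — this factor is bounded uniformly in $t$ and $\varepsilon$. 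Writing the Gagliardo–Nirenberg inequality as
\begin{align*}
\|w\|_{L^{\frac{2r}{r-1}}(\Omega)}\leq C\|\nabla w\|_{L^2(\Omega)}^{\theta}\|w\|_{L^{1/q}(\Omega)}^{1-\theta}+C\|w\|_{L^{1/q}(\Omega)},
\end{align*}
with $\theta\in(0,1)$ determined by the scaling relation
\begin{align*}
\frac{r-1}{2r}=\left(\frac{1}{2}-\frac{1}{3}\right)\theta+q(1-\theta),
\end{align*}
squaring, and using $\|Du_\varepsilon\|_{L^r(\Omega)}\leq K$, one arrives at a bound of the form $C\left(\||\nabla c_\varepsilon|^{q-1}D^2c_\varepsilon\|_{L^2(\Omega)}^2+1\right)^{\theta}$. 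Finally, Young's inequality absorbs this into $\eta\int_\Omega|\nabla c_\varepsilon|^{2q-2}|D^2c_\varepsilon|^2\,{\rm d}x+C_{14}$, which is legitimate precisely when $\theta<1$.

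The main obstacle is verifying that the admissibility conditions for the Gagliardo–Nirenberg step and the exponent-smallness condition $\theta<1$ hold under exactly the hypotheses $r>\frac32$, $q\geq r-1$, and $(4-2r)q\leq r-1$. One has to check: (i) that $\frac{2r}{r-1}$ lies below the Sobolev exponent $6$ (or handle the endpoint), which uses $r>\frac32$; (ii) that $\theta\in[0,1)$, where solving the scaling identity gives $\theta=\frac{\frac{1}{2q}-\frac{r-1}{2rq}\cdot\frac1{?}}{\cdots}$ — the algebra must be organized so that the numerator of $1-\theta$ is controlled by $(4-2r)q-(r-1)\leq 0$; and (iii) that the exponent condition $q\geq r-1$ guarantees the low-order norm $L^{1/q}$ is meaningful (i.e. $1/q\leq$ the relevant interpolation endpoint) and that the pointwise bound on $|\nabla w|$ is valid for $q\geq1$. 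I expect the bulk of the work to be this bookkeeping of exponents, together with justifying the uniform bound on $\|\nabla c_\varepsilon\|_{L^1(\Omega)}$ (or $\|\nabla c_\varepsilon\|_{L^2(\Omega)}$); the structural idea — Hölder to peel off $Du_\varepsilon$, then Gagliardo–Nirenberg plus Young on the $c_\varepsilon$-part — is routine and parallels the earlier lemmas.
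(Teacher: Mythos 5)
Your opening step -- H\"older with exponents $r$ and $\tfrac{r}{r-1}$ to peel off $\|Du_\varepsilon\|_{L^r(\Omega)}\le K$, leaving $\|\nabla c_\varepsilon\|_{L^{2qr/(r-1)}(\Omega)}^{2q}$ to be controlled -- is exactly the paper's (i.e.\ Winkler's) first step. The gap is in the interpolation step. You interpolate $w=|\nabla c_\varepsilon|^q$ between $\|\nabla w\|_{L^2(\Omega)}$ and $\|w\|_{L^{1/q}(\Omega)}=\|\nabla c_\varepsilon\|_{L^1(\Omega)}^q$, and you need $\|\nabla c_\varepsilon(\cdot,t)\|_{L^1(\Omega)}$ (or $\|\nabla c_\varepsilon(\cdot,t)\|_{L^2(\Omega)}$) to be bounded uniformly in $t$ and $\varepsilon$. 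No such bound is available at this stage: the only a priori information is $\|c_\varepsilon\|_{L^\infty(\Omega)}\le\|c_0\|_{L^\infty(\Omega)}$ and $\|n_\varepsilon\|_{L^1(\Omega)}=\|n_0\|_{L^1(\Omega)}$. A uniform-in-time bound on $\|\nabla c_\varepsilon(\cdot,t)\|_{L^{2q}(\Omega)}$ is precisely what the entire bootstrap (the ODI of Lemma \ref{lem:summary 1}) is designed to produce, so assuming it here is circular; the space--time estimate $\int_0^\infty\int_\Omega|\nabla c_\varepsilon|^2\le\frac12\int_\Omega c_0^2$ obtained later by testing with $c_\varepsilon$ does not give a pointwise-in-time spatial bound. (The use of the quasi-norm $L^{1/q}$ with $1/q<1$ in Gagliardo--Nirenberg is a secondary, also nonstandard, issue.)

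The intended tool is Lemma \ref{interpolation lemma about varphi}, which the paper introduces expressly ``for making efficient use of the known $L^\infty$ bound for $c_\varepsilon$'': it interpolates $\|\nabla c_\varepsilon\|_{L^\lambda(\Omega)}$ between $\||\nabla c_\varepsilon|^{q-1}D^2c_\varepsilon\|_{L^2(\Omega)}$ and $\|c_\varepsilon\|_{L^\infty(\Omega)}$ only -- no norm of $\nabla c_\varepsilon$ appears on the right. Applying it with $\lambda=\tfrac{2qr}{r-1}$ and then Young's inequality finishes the proof; the admissibility window $\lambda\in[2q+2,4q+1]$ is exactly equivalent to the two hypotheses $q\ge r-1$ and $(4-2r)q\le r-1$. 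A telling symptom of the problem in your route is that your verification of $\theta<1$ uses only $r>\tfrac32$ and never invokes these two hypotheses, so they would play no role in your argument, whereas they are precisely what makes the correct interpolation applicable.
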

\vskip 2mm
\begin{lem}\label{lem:u varepsilon condition 2}
Let $m\geq 1$, and suppose that $r\in(1,\frac{3}{2}]$ and
\begin{align}\label{q and r relation 2}
q\in\left(1,\frac{2r+3}{3}\right).
\end{align}
Then for each $\eta>0$ and $K>0$ one can find $C_{15}=C(q,r,\eta,K)>0$ such that if there exist $\varepsilon\in(0,1)$ fulfilling
\begin{align}\label{u varepsilon condition 2}
\|u_\varepsilon\|_{L^r(\Omega)}\leq K\ \ \ \mathrm{for\ all}\ t\in(0,T_{max,\varepsilon}),
\end{align}
then
\begin{align}\label{estimate 7 in 2 condition}
\int_\Omega|\nabla c_\varepsilon|^{2q}|u_\varepsilon|^2{\rm d}x\leq\eta\int_\Omega|\nabla c_\varepsilon|^{2q-2}|D^2c_\varepsilon|^2{\rm d}x+\eta\int_\Omega|Au_\varepsilon|^2{\rm d}x +C_{15}\ \ \ \mathrm{for\ all}\ t\in(0,T_{max,\varepsilon}).
\end{align}
\end{lem}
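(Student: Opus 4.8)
\textbf{Proof proposal for Lemma \ref{lem:u varepsilon condition 2}.}
The plan is to estimate $\int_\Omega|\nabla c_\varepsilon|^{2q}|u_\varepsilon|^2\,{\rm d}x$ by H\"older's inequality, splitting off the factor $|u_\varepsilon|^2$ in a Lebesgue space dictated by the Sobolev embedding available from the assumed $L^r$ bound. Since $r\in(1,\tfrac32]$, Lemma \ref{relationships between boundedness of n varepsilon and Du varepsilon} is not directly applicable, but the hypothesis \eqref{u varepsilon condition 2} gives $\|u_\varepsilon(\cdot,t)\|_{L^r(\Omega)}\leq K$, and combining this with the smoothing properties of the Stokes semigroup (or, more simply, with the bound $\|A^{\beta}u_\varepsilon\|_{L^2}$ controlled via $\|Au_\varepsilon\|_{L^2}$ and interpolation against $\|u_\varepsilon\|_{L^r}$) yields an estimate of the form $\|u_\varepsilon\|_{L^s(\Omega)}\leq C\|Au_\varepsilon\|_{L^2(\Omega)}^{\theta}\|u_\varepsilon\|_{L^r(\Omega)}^{1-\theta}+C$ for suitable $s$ and $\theta\in(0,1)$. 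I would first fix the integrability exponent $s$ for $u_\varepsilon$ so that $|\nabla c_\varepsilon|^{2q}$ lands in a dual exponent compatible with the interpolation-type Lemma \ref{interpolation lemma about varphi}; the condition $q<\tfrac{2r+3}{3}$ in \eqref{q and r relation 2} is exactly what is needed to keep the relevant Gagliardo--Nirenberg exponent $\lambda$ for $\nabla c_\varepsilon$ inside the admissible window $[2q+2,4q+1]$ of that lemma.

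Concretely, I would write, with $\tfrac1{s}+\tfrac{\lambda-2q}{\lambda}\cdot\tfrac{?}{?}$... more carefully: apply H\"older with exponents $\tfrac{\lambda}{2q}$ and $\tfrac{\lambda}{\lambda-2q}$ so that
\begin{align*}
\int_\Omega|\nabla c_\varepsilon|^{2q}|u_\varepsilon|^2\,{\rm d}x\leq\left\|\,|\nabla c_\varepsilon|\,\right\|_{L^{\lambda}(\Omega)}^{2q}\,\|u_\varepsilon\|_{L^{\frac{2\lambda}{\lambda-2q}}(\Omega)}^{2},
\end{align*}
choosing $\lambda\in[2q+2,4q+1]$. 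Then I would estimate $\|\nabla c_\varepsilon\|_{L^\lambda(\Omega)}^{2q}$ via Lemma \ref{interpolation lemma about varphi} (using $c\cdot\partial_\nu c=0$ on $\partial\Omega$, which holds because $\partial_\nu c_\varepsilon=0$) to get a bound of the form $C(\int_\Omega|\nabla c_\varepsilon|^{2q-2}|D^2c_\varepsilon|^2\,{\rm d}x+1)^{\kappa}$ with $\kappa=\tfrac{q(2\lambda-6)}{(2q-1)\lambda}$ after absorbing the $L^\infty$ bound \eqref{L^infty boundedness of c varepsilon} on $c_\varepsilon$, and I would estimate $\|u_\varepsilon\|_{L^{2\lambda/(\lambda-2q)}(\Omega)}^{2}$ by the interpolation inequality above to get $C(\int_\Omega|Au_\varepsilon|^2\,{\rm d}x+1)^{\mu}$ for an appropriate $\mu\in(0,1)$ determined by the three exponents $r$, $\lambda$, $q$. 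Multiplying the two factors and invoking Young's inequality with three exponents distributes the result into $\eta\int_\Omega|\nabla c_\varepsilon|^{2q-2}|D^2c_\varepsilon|^2$, $\eta\int_\Omega|Au_\varepsilon|^2$, and a constant $C_{15}$, provided the total homogeneity $\kappa+\mu$ relative to those two ``good'' quantities stays strictly below $1$.

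The main obstacle, and the place where the hypotheses \eqref{q and r relation 2} must be used sharply, is verifying this subcriticality: one must check both that $\lambda$ can be chosen in $[2q+2,4q+1]$ \emph{and} that the resulting sum of exponents $\kappa+\mu<1$, which after simplification reduces to the single inequality $q<\tfrac{2r+3}{3}$ together with $q>1$. I would optimize the choice of $\lambda$ (likely $\lambda=2q+2$, matching the choice in Lemma \ref{lem:upper bounds p}) to make the bookkeeping cleanest, then carry out the arithmetic verifying that $\tfrac{q(2\lambda-6)}{(2q-1)\lambda}+\mu<1$; this is the step where a sign computation on a linear expression in $q$ and $r$ must come out positive, entirely analogously to the computations closing Lemmas \ref{lem:upper bounds p} and \ref{lem:below bounds of p}. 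The role of the $L^r$ bound with $r$ only in $(1,\tfrac32]$ (rather than $r>\tfrac32$ as in Lemma \ref{lem:u varepsilon condition 1}) is that here we must pay with a factor of $\|Au_\varepsilon\|_{L^2}$ — hence the extra term $\eta\int_\Omega|Au_\varepsilon|^2$ in \eqref{estimate 7 in 2 condition}, which is absent in \eqref{estimate 7 in 1 condition} — and the narrower $q$-window \eqref{q and r relation 2} is precisely the price for routing the estimate through the Stokes operator rather than through the gradient bound on $u_\varepsilon$.
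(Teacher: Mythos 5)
Your overall scheme (H\"older with exponents $\tfrac{\lambda}{2q}$ and $\tfrac{\lambda}{\lambda-2q}$, Lemma \ref{interpolation lemma about varphi} for the $\nabla c_\varepsilon$ factor, Gagliardo--Nirenberg interpolation of $u_\varepsilon$ between $L^r(\Omega)$ and $D(A)\hookrightarrow W^{2,2}(\Omega)$, then Young) is the natural one and matches the strategy of the cited reference. But the step you explicitly defer --- ``the arithmetic verifying that $\kappa+\mu<1$'' --- is exactly where the argument breaks down for the lemma as stated, and it does \emph{not} reduce to $q<\tfrac{2r+3}{3}$. With your (indeed optimal) choice $\lambda=2q+2$, Lemma \ref{interpolation lemma about varphi} gives $\kappa=\tfrac{q}{q+1}$, while the Gagliardo--Nirenberg exponent in $\|u_\varepsilon\|_{L^{2q+2}(\Omega)}^2\le C\|Au_\varepsilon\|_{L^2(\Omega)}^{2a}\|u_\varepsilon\|_{L^r(\Omega)}^{2(1-a)}+C$ is fixed by $\tfrac{1}{2q+2}=-\tfrac a6+\tfrac{1-a}{r}$, so $\mu=a$ and
\begin{align*}
\kappa+\mu<1\quad\Longleftrightarrow\quad\frac{\frac1r-\frac12+\frac{q}{2q+2}}{\frac1r+\frac16}<\frac{1}{q+1}\quad\Longleftrightarrow\quad\frac qr<\frac23\quad\Longleftrightarrow\quad q<\frac{2r}{3}\le 1,
\end{align*}
which is incompatible with $q>1$. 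Since the dependence of $\kappa+\mu$ on $\tfrac1\lambda$ is monotone, no other $\lambda\in[2q+2,4q+1]$ does better; the multiplicative interpolation is genuinely supercritical ($\kappa+\mu\ge1$), so your Young step cannot produce arbitrary $\eta$ in front of \emph{both} dissipation terms. The hypothesis $\|u_\varepsilon\|_{L^r(\Omega)}\le K$ with $r\le\tfrac32$ is simply too weak for the claimed conclusion.

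The root cause is that the statement itself is a mis-transcription of Lemma 3.12 of \cite{MR3426095}: there the hypothesis is $\|Du_\varepsilon(\cdot,t)\|_{L^r(\Omega)}\le K$ (consistent with (\ref{u varepsilon condition summary}) in Lemma \ref{lem:summary 1}), and the quantity estimated is $\int_\Omega|\nabla c_\varepsilon|^{2q}|Du_\varepsilon|\,{\rm d}x$ --- the term that actually appears in (\ref{differential inequality about nabla c 2q}) --- not $\int_\Omega|\nabla c_\varepsilon|^{2q}|u_\varepsilon|^2\,{\rm d}x$. For that corrected statement your framework closes verbatim: H\"older gives $\|\nabla c_\varepsilon\|_{L^{2q+2}(\Omega)}^{2q}\,\|Du_\varepsilon\|_{L^{q+1}(\Omega)}$; interpolating $\|Du_\varepsilon\|_{L^{q+1}(\Omega)}\le C\|Au_\varepsilon\|_{L^2(\Omega)}^{a}\|Du_\varepsilon\|_{L^r(\Omega)}^{1-a}+C$ with $\tfrac{1}{q+1}=\tfrac a6+\tfrac{1-a}{r}$ yields $\mu=\tfrac a2$, and
\begin{align*}
\frac{q}{q+1}+\frac a2<1\quad\Longleftrightarrow\quad\frac{q-1}{r}<\frac23\quad\Longleftrightarrow\quad q<\frac{2r+3}{3},
\end{align*}
which is exactly (\ref{q and r relation 2}). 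So the architecture of your proof is right, but carrying out the exponent bookkeeping you postponed would have shown both that your claimed subcriticality fails for the statement as written and that the lemma needs to be restated before it can be proved.
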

\vskip 3mm
\section{Combining previous estimates}
\vskip 2mm
Now if $m>l-1$, then the conditions on $p$ in Lemmas \ref{lem:upper bounds p} and \ref{lem:below bounds of p} can be fulfilled simultaneously for any choice of $q>1$ and $l>2$. Thus, resorting to such $m$ allows for combining the above results to derive an ODI. And we note that all constants appear in this section is independent of $\varepsilon$.
\begin{lem}\label{lem:summary 1}
Assume that $m>l-1$. Let $r\geq 1$ and $q>1$ satisfy
\begin{eqnarray}\label{q and r relation summary}
  \left\{\begin{array}{lll}
  \medskip
  q<\frac{3+2r}{3},\ \ \ &\mathrm{if}\ r\leq\frac{3}{2},\\
  \medskip
  (4-2r)q\leq r-1,\ \ \ &\mathrm{if}\ r>\frac{3}{2},
\end{array}\right.
\end{eqnarray}
and assume that $p>\max\{l-1,m-2l+3\}$ be such that
\begin{align}\label{p q m condition}
\frac{3q-3m+4}{3}<p<\left(2m-2l+\frac{8}{3}\right)q+m-2l+3.
\end{align}
Then for all $K>0$ one can find a constant $C_{16}=C(p,q,r,K)>0$ such that if for some $\varepsilon\in(0,1)$ and $T_{max,\varepsilon}>0$ we have
\begin{align}\label{u varepsilon condition summary}
\|Du_\varepsilon(\cdot,t)\|_{L^r(\Omega)}\leq K\ \ \ \mathrm{for\ all}\ t\in(0,T_{max,\varepsilon}),
\end{align}
then
\begin{align}\label{odi inequality}
&\frac{{\rm d}}{{\rm d}t}\left\{\int_\Omega(n_\varepsilon+\varepsilon)^p{\rm d}x+\int_\Omega|\nabla c_\varepsilon|^{2q}{\rm d}x+\int_\Omega|A^{\frac{1}{2}}u_\varepsilon|^2{\rm d}x\right\}
+\frac{1}{C_{16}}\cdot\left\{\int_\Omega|\nabla n_\varepsilon^{\frac{m+p-1}{2}}|^2{\rm d}x\right.\notag\\ &\left.+\int_\Omega|\nabla c_\varepsilon|^{2q-2}|D^2c_\varepsilon|^2{\rm d}x+\int_\Omega|Au_\varepsilon|^2{\rm d}x\right\}
\leq C_{16}.\ \ \ \mathrm{for\ all}\ t\in(0,T).
\end{align}
\end{lem}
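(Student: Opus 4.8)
The plan is to combine the differential inequalities established in Lemmas~\ref{boundedness of n varepsilon in L^p}, \ref{difference of nabla c varepsilon} and \ref{lem:estimate to u varepsilon} into a single ODI for the functional
\begin{align*}
y_\varepsilon(t):=\int_\Omega(n_\varepsilon+\varepsilon)^p\,{\rm d}x+\int_\Omega|\nabla c_\varepsilon|^{2q}\,{\rm d}x+\int_\Omega|A^{\frac12}u_\varepsilon|^2\,{\rm d}x,
\end{align*}
and then to absorb all the ``bad'' terms on the right-hand side into the dissipation terms using the estimating lemmas proved above. First I would multiply the inequality in Lemma~\ref{boundedness of n varepsilon in L^p} by $p$ and the inequality in Lemma~\ref{difference of nabla c varepsilon} by $2q$, add them to the inequality of Lemma~\ref{lem:estimate to u varepsilon}, and thereby obtain
\begin{align*}
\frac{{\rm d}}{{\rm d}t}y_\varepsilon(t)+c_1\int_\Omega|\nabla n_\varepsilon^{\frac{m+p-1}{2}}|^2+c_2\int_\Omega|\nabla c_\varepsilon|^{2q-2}|D^2c_\varepsilon|^2+c_3\int_\Omega|Au_\varepsilon|^2
\end{align*}
bounded above by a sum of three problematic integrals: $\int_\Omega(n_\varepsilon+\varepsilon)^{p+2l-m-3}|\nabla c_\varepsilon|^2$, $\int_\Omega n_\varepsilon^2|\nabla c_\varepsilon|^{2q-2}$, and $\int_\Omega|\nabla c_\varepsilon|^{2q}|Du_\varepsilon|$, plus the constant $C_2$ from Lemma~\ref{lem:estimate to u varepsilon}, with positive constants $c_1,c_2,c_3$ depending only on $p,q,m$.

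Next I would invoke the three estimating lemmas to control each bad term by a small multiple $\eta$ of the dissipation terms plus a constant. The first integral is handled by Lemma~\ref{lem:upper bounds p}, which requires $m\ge l-1$, $p>\max\{1,m-2l+3\}$ and the upper constraint $p<(2m-2l+\tfrac83)q+m-2l+3$ in \eqref{p q m condition}. The second integral is handled by Lemma~\ref{lem:below bounds of p}, which requires the lower constraint $p>\tfrac{3q-3m+4}{3}$; I would note that when $m>l-1$ the interval for $p$ in \eqref{p q m condition} is nonempty for every $q>1$, and that the hypothesis $p>\max\{l-1,m-2l+3\}$ guarantees compatibility with Lemma~\ref{lem:upper bounds p} as well (since $l-1\ge 1$ because $l>2$). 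For the third integral I would split into two cases according to $r$: if $r>\tfrac32$, Lemma~\ref{lem:u varepsilon condition 1} applies directly under the hypothesis $\|Du_\varepsilon\|_{L^r}\le K$ and the constraint $(4-2r)q\le r-1$; if $r\le\tfrac32$, I would first use the Poincar\'e--type/Sobolev embedding (and $u_\varepsilon=0$ on $\partial\Omega$) to deduce $\|u_\varepsilon\|_{L^{r^\ast}}\le C\|Du_\varepsilon\|_{L^r}\le CK$ for a suitable $r^\ast>r$, and then apply Lemma~\ref{lem:u varepsilon condition 2}, whose hypothesis $q<\tfrac{2r+3}{3}$ is exactly the first alternative in \eqref{q and r relation summary}, producing a further $\eta\int_\Omega|Au_\varepsilon|^2$ term that is also absorbed.

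Choosing $\eta$ small enough relative to $c_1,c_2,c_3$ (a finite selection since only finitely many $\eta$-terms appear), all dissipation terms survive on the left with, say, half their coefficients, and the remaining constants aggregate into a single $C_{16}$; this yields \eqref{odi inequality}. The main obstacle I anticipate is purely bookkeeping rather than conceptual: one must check carefully that the admissible range of $p$ in \eqref{p q m condition} is nonempty and simultaneously compatible with \emph{all} of the structural hypotheses $p>\max\{l-1,m-2l+3\}$, $p>\max\{1,m-2l+3\}$ and $m\ge l-1$ demanded by Lemmas~\ref{lem:upper bounds p} and \ref{lem:below bounds of p}, and that the choice of $q,r$ in \eqref{q and r relation summary} makes the $u_\varepsilon$-term lemmas applicable; the case split on $r$ at the threshold $\tfrac32$ and the embedding step converting the $L^r$ bound on $Du_\varepsilon$ into an $L^{r^\ast}$ bound on $u_\varepsilon$ is the one place where a small extra argument (beyond merely quoting earlier lemmas) is needed. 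Everything else is a linear combination of already-established inequalities followed by Young's inequality.
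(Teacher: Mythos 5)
Your overall strategy coincides exactly with the paper's own (one-sentence) proof, which simply says to combine Lemmas~\ref{boundedness of n varepsilon in L^p}--\ref{lem:estimate to u varepsilon} with Lemmas~\ref{lem:upper bounds p}--\ref{lem:u varepsilon condition 2} after choosing $\eta$ small; your rescaling by $p$ and $2q$, the identification of the three ``bad'' integrals, and the use of Lemmas~\ref{lem:upper bounds p}, \ref{lem:below bounds of p} for the first two of them all match what is intended.

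There is, however, a genuine gap in your treatment of the case $r\le\frac{3}{2}$. The term that actually appears on the right of Lemma~\ref{difference of nabla c varepsilon} is $\int_\Omega|\nabla c_\varepsilon|^{2q}|Du_\varepsilon|\,{\rm d}x$, whereas Lemma~\ref{lem:u varepsilon condition 2} estimates a \emph{different} integral, namely $\int_\Omega|\nabla c_\varepsilon|^{2q}|u_\varepsilon|^2\,{\rm d}x$. You cannot pass from the $L^r$-bound on $Du_\varepsilon$ to an $L^{r^\ast}$-bound on $u_\varepsilon$ and then ``apply Lemma~\ref{lem:u varepsilon condition 2}'' directly to the $Du_\varepsilon$-term: the integrands do not match. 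What is missing is an additional integration by parts in the convection contribution. Concretely, one returns to the term $\int_\Omega(\partial_ju_\varepsilon^i)(\partial_ic_\varepsilon)(\partial_jc_\varepsilon)|\nabla c_\varepsilon|^{2q-2}\,{\rm d}x$, moves the derivative off $u_\varepsilon$ (this is where $u_\varepsilon|_{\partial\Omega}=0$ and $\nabla\cdot u_\varepsilon=0$ are actually used), and then applies Young's inequality to the resulting expressions of the form $|u_\varepsilon|\,|\nabla c_\varepsilon|^{2q-1}|D^2c_\varepsilon|$; this trades the $Du_\varepsilon$-term for $\eta\int_\Omega|\nabla c_\varepsilon|^{2q-2}|D^2c_\varepsilon|^2\,{\rm d}x + C_\eta\int_\Omega|u_\varepsilon|^2|\nabla c_\varepsilon|^{2q}\,{\rm d}x$, and only then is Lemma~\ref{lem:u varepsilon condition 2} applicable. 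Without that step, the $r\le\frac32$ alternative of \eqref{q and r relation summary} is never actually used in a way that closes the estimate. (Once this is repaired, the rest of your bookkeeping on the admissibility of $p,q,r$ is correct.)
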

\begin{proof}
To obtain the ODI inequality (\ref{odi inequality}), we only need to combine Lemma \ref{boundedness of n varepsilon in L^p}-\ref{lem:estimate to u varepsilon} with Lemma \ref{lem:upper bounds p}-\ref{lem:u varepsilon condition 2} by choosing a suitable $\eta>0$.
\end{proof}
\vskip 2mm

Assuming the boundedness property of $u_\varepsilon$, upon a further analysis of (\ref{odi inequality}) we can estimate $n_\varepsilon$ in $L^\infty((0,\infty);L^{p}(\Omega))$ for certain $p\in(1,\infty)$.
\vskip 2mm
\begin{lem}\label{lem:L^p boundedness}
Let $m>l-1$, and assume that $r\geq1$ and $p>\max\{l-1,m-2l+3\}$ are such that there exists $q>1$ for which
(\ref{q and r relation summary}) and (\ref{p q m condition}) hold. Then for all $K>0$ there exists $C_{17}=C(p,q,r,K)>0$ with the property that if $\varepsilon\in(0,1)$ is such that
\begin{align}\label{u condition summary}
\|Du_\varepsilon\|_{L^{r}(\Omega)}\leq K\ \ \ \ \mathrm{for\ all}\ t\in(0,T_{max,\varepsilon}),
\end{align}
then we have
\begin{align}\label{n bounds under u varepsilon}
\int_\Omega n_\varepsilon^p(\cdot,t){\rm d}x\leq C_{17}\ \ \ \ \mathrm{for\ all}\ t\in(0,T_{max,\varepsilon}).
\end{align}
\end{lem}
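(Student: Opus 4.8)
The plan is to deduce this $L^p$-bound from the ODI established in Lemma~\ref{lem:summary 1} by showing that the dissipation term on the left absorbs a suitable multiple of the quantity being differentiated, which then reduces the whole matter to a standard Gronwall/comparison argument. First I would fix $p,q,r$ as in the hypotheses, note that then the hypotheses of Lemma~\ref{lem:summary 1} are met, and under the assumption (\ref{u condition summary}) invoke (\ref{odi inequality}) to obtain constants (all independent of $\varepsilon$) so that, writing
\begin{align*}
y_\varepsilon(t):=\int_\Omega(n_\varepsilon+\varepsilon)^p{\rm d}x+\int_\Omega|\nabla c_\varepsilon|^{2q}{\rm d}x+\int_\Omega|A^{\frac12}u_\varepsilon|^2{\rm d}x,
\end{align*}
we have $y_\varepsilon'(t)+\frac{1}{C_{16}}D_\varepsilon(t)\le C_{16}$ for all $t\in(0,T_{max,\varepsilon})$, where $D_\varepsilon(t)$ denotes the sum of the three dissipation integrals.

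The key step is then to bound $y_\varepsilon(t)\le C\,(D_\varepsilon(t))^{\theta}+C$ for some $\theta\in(0,1)$, or more conveniently $y_\varepsilon(t)\le \tfrac{1}{2C_{16}}D_\varepsilon(t)+C$, so that the ODI becomes $y_\varepsilon'(t)+\tfrac{1}{2C_{16}}y_\varepsilon(t)\le C$ after possibly enlarging constants. For the first summand of $y_\varepsilon$ this is the Gagliardo--Nirenberg interpolation $\|(n_\varepsilon+\varepsilon)^{\frac{m+p-1}{2}}\|_{L^{\frac{2p}{m+p-1}}(\Omega)}^{\frac{2p}{m+p-1}}$ controlled by a sub-quadratic power of $\|\nabla(n_\varepsilon+\varepsilon)^{\frac{m+p-1}{2}}\|_{L^2(\Omega)}$ together with $\|(n_\varepsilon+\varepsilon)^{\frac{m+p-1}{2}}\|_{L^{\frac{2}{m+p-1}}(\Omega)}$, the latter being $(\int_\Omega(n_\varepsilon+\varepsilon))^{\frac{m+p-1}{2}}$ which is under control thanks to mass conservation (\ref{mass conservation of n varepsilon}); the interpolation exponent is $<2$ precisely because $p$ is finite and $m\ge1$, so Young's inequality makes this term absorbable into $\frac{1}{C_{16}}\int_\Omega|\nabla n_\varepsilon^{\frac{m+p-1}{2}}|^2$. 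For the second summand one uses Lemma~\ref{interpolation lemma about varphi} with $\lambda=2q$, estimating $\|\nabla c_\varepsilon\|_{L^{2q}(\Omega)}^{2q}$ by a sub-quadratic power (in the sense of the squared norm) of $\||\nabla c_\varepsilon|^{q-1}D^2c_\varepsilon\|_{L^2(\Omega)}$ plus $\|c_\varepsilon\|_{L^\infty(\Omega)}$-terms, the latter bounded via (\ref{L^infty boundedness of c varepsilon}); again Young absorbs this into $\frac{1}{C_{16}}\int_\Omega|\nabla c_\varepsilon|^{2q-2}|D^2c_\varepsilon|^2$. The third summand $\int_\Omega|A^{\frac12}u_\varepsilon|^2$ is handled by the Poincar\'e-type inequality $\|A^{\frac12}u_\varepsilon\|_{L^2(\Omega)}^2\le C\|Au_\varepsilon\|_{L^2(\Omega)}^2$ valid on $D(A)$, which is an honest inequality with no absorption issue.

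Once $y_\varepsilon'(t)+\delta y_\varepsilon(t)\le C$ holds with $\delta=\tfrac{1}{2C_{16}}>0$, a direct comparison argument (or Gronwall) gives $y_\varepsilon(t)\le\max\{y_\varepsilon(0),C/\delta\}$ for all $t\in(0,T_{max,\varepsilon})$, and since $y_\varepsilon(0)$ is bounded by a constant depending only on $\|n_0\|_{L^\infty(\Omega)}$, $\|c_0\|_{W^{1,\infty}(\Omega)}$ and $\|A^{\frac12}u_0\|_{L^2(\Omega)}$ through (\ref{the initial data conditions}) (here using $q>1$ and $\alpha>\tfrac34$ so these initial norms are finite and $\varepsilon$-independent), we conclude $\int_\Omega n_\varepsilon^p(\cdot,t)\,{\rm d}x\le y_\varepsilon(t)\le C_{17}$ as claimed. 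The main obstacle I anticipate is purely bookkeeping: verifying that \emph{each} of the three Gagliardo--Nirenberg/interpolation exponents is genuinely strictly less than the critical value $1$ (after normalizing so the dissipation enters quadratically), so that Young's inequality is applicable with an arbitrarily small prefactor $\eta$ matching the $\frac{1}{C_{16}}$ coefficients; this is where the constraints $p<\infty$, $m\ge1$, $q>1$ and the admissibility range $\lambda\in[2q+2,4q+1]$ of Lemma~\ref{interpolation lemma about varphi} (note $2q$ lies in this range when $q\ge2$, and a minor variant or a different $\lambda$ must be used when $1<q<2$) are exactly what is needed, but no new idea beyond careful exponent arithmetic is required.
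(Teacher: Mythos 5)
Your proposal is correct and follows essentially the same route as the paper, whose proof is omitted and simply defers to the ODI of Lemma~\ref{lem:summary 1} together with the absorption-plus-comparison argument of Lemma~3.14 in \cite{MR3426095}: interpolate each summand of $y_\varepsilon$ against the corresponding dissipation term with a subcritical exponent, absorb by Young, and conclude by ODE comparison. One correction to your treatment of the second summand: $\lambda=2q$ is \emph{never} in the admissible range $[2q+2,4q+1]$ of Lemma~\ref{interpolation lemma about varphi} (not only for $1<q<2$, since $2q<2q+2$ for every $q$); the standard fix is to first use H\"older's inequality on the bounded domain to pass from $\|\nabla c_\varepsilon\|_{L^{2q}(\Omega)}^{2q}$ to $\|\nabla c_\varepsilon\|_{L^{2q+2}(\Omega)}^{2q}$ and then apply (\ref{interpolation inequality about varphi}) with $\lambda=2q+2$, which gives the exponent $\tfrac{2q}{q+1}<2$ on $\bigl\||\nabla c_\varepsilon|^{q-1}D^2c_\varepsilon\bigr\|_{L^2(\Omega)}$ and lets Young's inequality absorb the term exactly as you intend.
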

\begin{proof}
In this lemma, we can derive the consequence from (\ref{odi inequality}) which is almost the same to the lemma 3.14 in \cite{MR3426095}, then we omit the details here.
\end{proof}

Now by virtue of the mass conservation of $n_\varepsilon$ (\ref{mass conservation of n varepsilon}), a first application of Lemma \ref{relationships between boundedness of n varepsilon and Du varepsilon} warrants that the assumption (\ref{u condition summary}) in the above lemma is fulfilled for some suitably small $r>1$. Adjusting the parameter $q$ properly, we thereby arrive at the following result which may be viewed as an improvement of the regularity property implied by (\ref{mass conservation of n varepsilon}) because the $5m^\star(l)-6l+\frac{25}{3}>1$ is fulfilled for any $l>2$.

\begin{lem}\label{lem:q upper bound related to m}
Let $m>m^\star(l)$ with $m^\star(l)$ defined as in (\ref{definition of m star}). Then for all $p>\max\{l-1,m-2l+3\}$ satisfying
\begin{align}\label{q upper bound related to m}
p<5m-6l+\frac{25}{3},
\end{align}
one can find $C_{18}=C(p)>0$ such that whenever $\varepsilon\in(0,1)$, we have
\begin{align}\label{n L^p bounds 1}
\|n_\varepsilon(\cdot,t)\|_{L^p(\Omega)}\leq C_{18}\ \ \ \ \mathrm{for\ all}\ t\in(0,T_{max,\varepsilon}).
\end{align}
\end{lem}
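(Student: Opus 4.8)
The plan is to combine the two-step bootstrap that is already prepared in the excerpt. The engine is Lemma~\ref{lem:L^p boundedness}: it converts an \emph{a priori} $L^r$-bound on $Du_\varepsilon$ into an $L^p$-bound on $n_\varepsilon$, provided that we can exhibit an admissible triple $(p,q,r)$ satisfying the compatibility conditions \eqref{q and r relation summary} and \eqref{p q m condition}. In turn, Lemma~\ref{relationships between boundedness of n varepsilon and Du varepsilon} converts any $L^s$-bound on $n_\varepsilon$ into an $L^r$-bound on $Du_\varepsilon$ whenever $r<\frac{3s}{3-s}$ (for $s\le 3$) or $r\le\infty$ (for $s>3$). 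Since the mass conservation \eqref{mass conservation of n varepsilon} gives us the starting point $s=1$ for free, a first application of Lemma~\ref{relationships between boundedness of n varepsilon and Du varepsilon} produces a bound on $\|Du_\varepsilon\|_{L^r}$ for every $r<\frac{3}{2}$; feeding this into Lemma~\ref{lem:L^p boundedness} yields \eqref{n L^p bounds 1}, with the exponent range of $p$ dictated by how large we may take $r$ and $q$.

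First I would fix $r\in(1,\tfrac32)$ arbitrarily close to $\tfrac32$; this is permissible by Lemma~\ref{relationships between boundedness of n varepsilon and Du varepsilon} applied with $s=1$, since $\tfrac{3}{3-1}=\tfrac32$. Because $r\le\frac32$, the relevant branch of \eqref{q and r relation summary} is $q<\frac{3+2r}{3}$, so $q$ may be taken arbitrarily close to $\frac{3+2r}{3}\to\frac{6}{3}=2$ as $r\uparrow\frac32$; hence any $q<2$ is admissible. With such a $q$ in hand, the double inequality \eqref{p q m condition} requires
\begin{align*}
\frac{3q-3m+4}{3}<p<\Bigl(2m-2l+\tfrac83\Bigr)q+m-2l+3,
\end{align*}
together with $p>\max\{l-1,m-2l+3\}$. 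Optimizing the upper bound over $q<2$: the coefficient $2m-2l+\frac83$ of $q$ is positive precisely when $m>l-\frac43$, which holds since $m>m^\star(l)\ge l-\frac56>l-\frac43$; therefore the supremum over $q\uparrow 2$ of the right-hand side is $(2m-2l+\frac83)\cdot 2+m-2l+3=5m-6l+\frac{25}{3}$. This is exactly the bound \eqref{q upper bound related to m}. One then checks that the lower constraints are compatible: for $p$ close to $5m-6l+\frac{25}{3}$ the condition $p>\frac{3q-3m+4}{3}$ reads, after letting $q\uparrow2$, $5m-6l+\frac{25}{3}>\frac{10-3m+4}{3}=\frac{14-3m}{3}$, i.e. $18m>18l-11$, again implied by $m>l-\frac56$; likewise $5m-6l+\frac{25}{3}>\max\{l-1,m-2l+3\}$ reduces to $5m>7l-\frac{28}{3}$ and $4m>4l-\frac{16}{3}$ respectively, the first of which is exactly $m>\frac{7}{5}l-\frac{28}{15}$ and the second of which is weaker; both are subsumed in the definition \eqref{definition of m star} of $m^\star(l)$. (The case split in \eqref{definition of m star} is precisely the point where which of the lower constraints is binding changes.)

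Having verified that for every $p<5m-6l+\frac{25}{3}$ with $p>\max\{l-1,m-2l+3\}$ there is an admissible pair $(q,r)$, I would apply Lemma~\ref{lem:L^p boundedness} with $K$ the constant furnished by Lemma~\ref{relationships between boundedness of n varepsilon and Du varepsilon} (which depends only on the conserved mass $\int_\Omega n_0$, hence is independent of $\varepsilon$). Its conclusion \eqref{n bounds under u varepsilon} is exactly \eqref{n L^p bounds 1}, with $C_{18}=C_{17}(p,q,r,K)$ depending only on $p$. The main obstacle is purely bookkeeping rather than analytic: one must check that the window $\bigl(\max\{l-1,m-2l+3,\tfrac{3q-3m+4}{3}\},\;5m-6l+\tfrac{25}{3}\bigr)$ for $p$ is nonempty and that $q<2$, $r<\tfrac32$ can be chosen simultaneously with strict inequalities in \eqref{q and r relation summary} and \eqref{p q m condition}; this is where the precise value of $m^\star(l)$ — and in particular the threshold $l=\frac{31}{12}$ at which the binding lower constraint switches from $m-2l+3$ versus $l-1$ to the constraint coming from the $q$-interval — gets pinned down. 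Once the elementary arithmetic is dispatched, the lemma follows immediately.
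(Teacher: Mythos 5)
Your overall strategy is exactly the paper's: start from mass conservation, invoke Lemma~\ref{relationships between boundedness of n varepsilon and Du varepsilon} with $s=1$ to bound $Du_\varepsilon$ in $L^r$ for $r<\tfrac32$, then feed this together with a suitable $q$ into Lemma~\ref{lem:L^p boundedness}, and trace the constraint $q<2$ through (\ref{p q m condition}) to recover the threshold $p<5m-6l+\tfrac{25}{3}$. This is precisely what the paper does.

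That said, there is a genuine arithmetic slip in your verification step: with $q=2$ the expression $\tfrac{3q-3m+4}{3}$ equals $\tfrac{10-3m}{3}$, not $\tfrac{14-3m}{3}$. Your version leads to the requirement $18m>18l-11$, i.e. $m>l-\tfrac{11}{18}$, which is strictly \emph{stronger} than the hypothesis $m>l-\tfrac56=l-\tfrac{15}{18}$ and hence does not follow from it, contrary to your claim. With the correct numerator one gets $18m>18l-15$, i.e. exactly $m>l-\tfrac56$, so the argument goes through — but as written your chain of implications is broken. Second, the compatibility check you defer to "bookkeeping" is actually where the paper does real work: it must be shown that for \emph{every} admissible $p$ (not just $p$ near the supremum) one can find $q\in(1,2)$ with $\tfrac{3(p-m+2l-3)}{6m-6l+8}<q<\tfrac{3p+3m-4}{3}$. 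The binding difficulty arises when $p$ is \emph{small}, where the right-hand endpoint $\tfrac{3p+3m-4}{3}$ may drop toward $1$; checking at $p$ near $5m-6l+\tfrac{25}{3}$ and $q\uparrow 2$ addresses the opposite extreme. The paper closes this by verifying $\tfrac{3(p+2l-m-3)}{6m-6l+8}<\tfrac{3p+3m-4}{3}$ through the monotonicity of the auxiliary quadratic $g(m)=18m^2-18lm+9m+6l-5$ and the fact that $g(l-\tfrac56)=0$, which is the step you would need to supply to make the proof complete.
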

\begin{proof}
We first observe that $m^\star(l)\geq l-1$ and
\begin{align}\label{m and l relation}
&\max\{l-1,m-2l+3\}<p<5m-6l+\frac{25}{3},\\
&\frac{7}{3}-m<5m-6l+\frac{25}{3}
\end{align}
are satisfied with $m>m^\star(l)$ for all $l>2$.

Now since $p<5m-6l+\frac{25}{3}$ by the (\ref{q upper bound related to m}), we have
\begin{align}\label{q upper condition 1}
\frac{3(p+2l-m-3)}{6m-6l+8}<2.
\end{align}
Moreover,  the function
\begin{align*}
g(m)=18m^2-18lm+9m+6l-5
\end{align*}
is increasing when $m>\frac{l}{2}-\frac{1}{4}$ by differentiating the function above.
Therefore, the assumption $m>l-\frac{5}{6}$ ensures that for all $l>2$
\begin{align}\label{m l condition to q}
-18(m-l+\frac{5}{6})p\leq 18m^2-18lm+9m+6l-5
\end{align}
due to $g(l-\frac{5}{6})=0$ since $l-\frac{5}{6}>\frac{l}{2}-\frac{1}{4}$ is fulfilled, which is equivalent to the inequality
\begin{align}\label{q relationship with p m}
\frac{3(p-m+2l-3)}{6m-6l+8}<\frac{3p+3m-4}{3},
\end{align}
then the above inequality is also satisfied with $m$ chosen above.

According to (\ref{m l condition to q}), (\ref{q relationship with p m}) and the fact that
\begin{align}\label{below bounds}
\frac{3p+3m-4}{3}>1
\end{align}
by $p>\frac{7}{3}-m$, we can fix $q\in(1,2)$ fulfilling
\begin{align}\label{upper and below bounds of q}
\frac{3(p+2l-m-1)}{6m-6l+8}<q<\frac{3p+3m-4}{3},
\end{align}
where the left inequality asserts that
$$p<5m-6l+\frac{25}{3},$$
and the right inequality guarantees that
$$p>\frac{3q-3m+4}{3},$$
altogether meaning that (\ref{p q m condition}) is satisfied. Due to $q<2$, we can finally pick $r\in[1,\frac{3}{2})$ sufficiently close to $\frac{3}{2}$ such that
$r>\frac{3q-3}{2}$, so that
$$q<\frac{2r+3}{3},$$
ensuring that also (\ref{q and r relation summary}) is valid. Then in view of (\ref{mass conservation of n varepsilon}), Lemma \ref{relationships between boundedness of n varepsilon and Du varepsilon} assets that
$$\|Du_\varepsilon\|_{L^r(\Omega)}\leq C_{19}\ \ \ \mathrm{for\ all}\ t\in(0,T_{max,\varepsilon})$$
with some $C_{19}>0$, whence according to the choices of $r$, $q$, and $p$, we may apply Lemma \ref{lem:summary 1} to find a $C_{18}=C(p)$ such that (\ref{n L^p bounds 1}) is satisfied. This proves the Lemma.
\end{proof}
\vskip 2mm

In a second step, on the basis of the knowledge just gained, we may apply the Lemma \ref{relationships between boundedness of n varepsilon and Du varepsilon} and once more combine the outcome thereof with Lemma \ref{lem:summary 1} to obtain bounds for $n_\varepsilon$ in any space $L^\infty((0,\infty);L^p(\Omega))$.
\vskip 2mm
\begin{lem}\label{lem:any p boundedness for n varepsilon}
Let $m>m^\star(l)$ with $m^\star(l)$ denoted in (\ref{definition of m star}). Then, for all $p>1$, there exists $C_{20}=C(p)>0$ such that for each $\varepsilon\in(0,1)$ we have
\begin{align}\label{n L^p boundedness 2}
\int_\Omega n_\varepsilon^p(\cdot,t){\rm d}x\leq C_{20}\ \ \ \mathrm{for\ all}\ t\in(0,T_{max,\varepsilon}).
\end{align}
\end{lem}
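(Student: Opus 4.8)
The plan is to run one further bootstrap on top of the $L^p$-bound of Lemma \ref{lem:q upper bound related to m}. The decisive gain there is that the exponent it produces can be taken strictly above $\tfrac65$, which is exactly the threshold needed to raise the gradient bound for $u_\varepsilon$ from $L^r$ with $r<\tfrac32$ up to $L^2$; and once $\|Du_\varepsilon\|_{L^2(\Omega)}$ is under control, the branch $r>\tfrac32$ of condition (\ref{q and r relation summary}) becomes the trivial inequality $0\le1$, the upper restriction on $q$ disappears, and then the right inequality in (\ref{p q m condition}) permits the exponent $p$ in Lemma \ref{lem:L^p boundedness} to be arbitrarily large.

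First I would invoke Lemma \ref{lem:q upper bound related to m}. Since $m>m^\star(l)$ one checks $5m-6l+\tfrac{25}{3}>\tfrac{19}{12}>\tfrac65$, while the proof of that lemma already records $5m-6l+\tfrac{25}{3}>\max\{l-1,m-2l+3\}$; hence one may fix some $p_0\in\bigl(\max\{\tfrac65,\,l-1,\,m-2l+3\},\ 5m-6l+\tfrac{25}{3}\bigr)$, obtaining $\|n_\varepsilon(\cdot,t)\|_{L^{p_0}(\Omega)}\le C_{18}$ uniformly in $t\in(0,T_{max,\varepsilon})$ and in $\varepsilon$. Because $p_0>\tfrac65$ we have $\tfrac{3p_0}{3-p_0}>2$ when $p_0\le3$ (and there is no restriction at all when $p_0>3$), so Lemma \ref{relationships between boundedness of n varepsilon and Du varepsilon}, applied with the pair of exponents $(p_0,2)$, yields a constant $C>0$, independent of $\varepsilon$, with $\|Du_\varepsilon(\cdot,t)\|_{L^2(\Omega)}\le C$ for all $t\in(0,T_{max,\varepsilon})$.

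Next I would fix an arbitrary $p\ge 5m-6l+\tfrac{25}{3}$, so that in particular $p>\max\{l-1,m-2l+3\}$ and $p+m-\tfrac43>1$ (both because $m>l-1$), and set $r=2$. Since $r>\tfrac32$, the relevant branch of (\ref{q and r relation summary}) is $(4-2r)q\le r-1$, i.e. $0\le1$, which holds for every $q$, so it only remains to choose $q>1$ obeying (\ref{p q m condition}); equivalently, $\max\bigl\{1,\ \tfrac{p-m+2l-3}{2m-2l+8/3}\bigr\}<q<p+m-\tfrac43$. This $q$-window is nonempty precisely because $m>m^\star(l)\ge l-\tfrac56$: this forces the coefficient $2m-2l+\tfrac83$ occurring in (\ref{upper bound of p}) to exceed $1$, whence $\tfrac{p-m+2l-3}{2m-2l+8/3}<p-m+2l-3<p+m-\tfrac43$, the last inequality amounting to $2m-2l+\tfrac53>0$; and $p+m-\tfrac43>1$ as noted. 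For such $r=2$, $q>1$ and $p$, the hypotheses of Lemma \ref{lem:L^p boundedness} are met (its required bound $\|Du_\varepsilon(\cdot,t)\|_{L^2(\Omega)}\le K$ is the one produced above), so $\int_\Omega n_\varepsilon^p(\cdot,t)\,{\rm d}x\le C_{17}$ uniformly in $\varepsilon$. Finally, for $1<p<5m-6l+\tfrac{25}{3}$ one either appeals directly to Lemma \ref{lem:q upper bound related to m} (when $p>\max\{l-1,m-2l+3\}$) or, for smaller $p$, interpolates by Hölder's inequality against a larger admissible exponent $p'$, $\|n_\varepsilon(\cdot,t)\|_{L^p(\Omega)}\le|\Omega|^{1/p-1/p'}\|n_\varepsilon(\cdot,t)\|_{L^{p'}(\Omega)}\le C$; this yields (\ref{n L^p boundedness 2}) for every $p>1$, with all constants independent of $\varepsilon$ because every cited estimate is.

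The step I expect to be the main obstacle — in fact the only non-mechanical point — is verifying that, after $r$ has been raised to $2$, the two inequalities packaged in (\ref{p q m condition}) can be solved simultaneously by some $q>1$ for every large $p$. This is the one place where the assumption $m>m^\star(l)$ is genuinely used: not through the explicit piecewise form (\ref{definition of m star}), but only through its consequence $m>l-\tfrac56$, which is precisely what makes the $q$-coefficient $2m-2l+\tfrac83$ in the upper bound for $p$ strictly greater than $1$ and hence keeps the admissible interval for $q$ from collapsing. Everything else is a direct application of the lemmas already established.
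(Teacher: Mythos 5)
Your proposal is correct and follows the same two-step bootstrap structure as the paper's proof: first harvest an $L^{p_0}$-bound on $n_\varepsilon$ from Lemma~\ref{lem:q upper bound related to m}, upgrade it via Lemma~\ref{relationships between boundedness of n varepsilon and Du varepsilon} to an $L^r$-bound on $Du_\varepsilon$ with $r>\tfrac32$ so that the constraint $(4-2r)q\le r-1$ from (\ref{q and r relation summary}) becomes vacuous, and then solve (\ref{p q m condition}) for arbitrarily large $p$ using only $m>l-\tfrac56$. The paper happens to fix $p_0=\tfrac{19}{12}$ and take $r=3$ (since $\tfrac{3\cdot 19/12}{3-19/12}=\tfrac{57}{17}>3$) while you observe that any $p_0>\tfrac65$ already delivers $r=2$, which is equally sufficient; this is an immaterial difference. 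One small correction: your parenthetical that $p\ge 5m-6l+\tfrac{25}{3}>\max\{l-1,m-2l+3\}$ holds ``both because $m>l-1$'' is not quite accurate — the inequality $5m-6l+\tfrac{25}{3}>l-1$ requires $m>\tfrac75 l-\tfrac{28}{15}$, which for $l>\tfrac{31}{12}$ is strictly stronger than $m>l-1$; it is the full hypothesis $m>m^\star(l)$ (which you do assume) that guarantees it, exactly as the paper's Lemma~\ref{lem:q upper bound related to m} already records. This is a misattribution of which hypothesis is used, not a gap in the argument.
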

\begin{proof}
In this lemma, we only need to prove that there definitely exist some $p>p_0$ satisfying (\ref{n L^p boundedness 2}) for any $p_0>\max\{l-1,m-2l+3\}$ with some positive constant $C$.\\
For this purpose, given such $p_0$ we first fix $q>1$ satisfying
\begin{align}\label{realtion of p and q0}
q>\frac{3(p_0+2l-m-3)}{6m-6l+8}
\end{align}
and observe that then since $m>l-\frac{5}{6}$ we have
$$3q-3m+4-(6m-6l+8)q-3m+6l-9=(-6m+6l-5)q+(-6m+6l-5)<0$$
and hence
$$\frac{3q-3m+4}{3}<\left(2m-2l+\frac{8}{3}\right)q+m-2l+3.$$
As (\ref{realtion of p and q0}) ensures that moreover
$$\left(2m-2l+\frac{8}{3}\right)q+m-2l+3>\frac{6m-6l+8}{3}\cdot\frac{3(p_0+2l-m-3)}{6m-6l+8}++m-2l+3=p_0,$$
we can therefore pick some $p>p_0$ fulling
\begin{align}\label{upper and below bounds to p related to p0}
\frac{3q-3m+4}{3}<p<p_0<\left(2m-2l+\frac{8}{3}\right)q+m-2l+3.
\end{align}
Now in order to verify (\ref{n L^p boundedness 2}) for these choices of $p$ and $q$, we fist use use the fact that
$$5m-6l+\frac{25}{3}>\frac{19}{12}$$
for all $l>2$ under the assumption that $m>\max\left\{l-\frac{5}{6},\frac{7}{5}l-\frac{28}{15}\right\}$. Then, we can infer from the Lemma \ref{lem:q upper bound related to m} that there exists some $C_{21}>0$ fulfilling
$$\|n_\varepsilon(\cdot,t)\|_{L^{\frac{19}{12}}(\Omega)}\leq C_{21}\ \ \ \ \mathrm{for\ all}\ t\in(0,T_{max,\varepsilon}).$$
Since $\frac{3\frac{19}{12}}{3-\frac{19}{12}}=\frac{57}{17}>3$, Lemma \ref{relationships between boundedness of n varepsilon and Du varepsilon} yields $C_{22}>0$ satisfying
$$\|u_\varepsilon(\cdot,t)\|_{L^3(\Omega)}\leq C_{22}$$
for any $t\in(0,T_{max,\varepsilon})$, which also implies that the condition $(4-2r)q\leq r-1$ in (\ref{q and r relation summary}) is trivially satisfied, thanks to (\ref{upper and below bounds to p related to p0}) we may invoke Lemma \ref{lem:L^p boundedness} to establish (\ref{n L^p boundedness 2}).
\end{proof}

By the application to some general semigroup estimates and the standard parabolic regularity arguments, we can derive that the classical solution to the system (\ref{the regularized system}) is global in time, at the same time some boundeness results can be obtained.
\vskip 3mm
\begin{lem}\label{lem:n,c,u boundedness}
Let $m>m^\star(l)$ with the definition of $m^\star(l)$ in (\ref{definition of m star}). Then the solution to the (\ref{the regularized system}) is global in time for all $\varepsilon\in(0,1)$ and also bounded as follows:
\begin{align}\label{n boundedess in L^infty}
\|n_\varepsilon(\cdot,t)\|_{L^\infty_{{\rm loc}}(0, \infty; L^\infty(\Omega))}\leq C_{23}
\end{align}
and
\begin{align}\label{c boundedess in W^1infty}
\|c_\varepsilon(\cdot,t)\|_{L^\infty_{{\rm loc}}(0, \infty;W^{1,\infty}(\Omega))}\leq C_{23}
\end{align}
as well as
\begin{align}\label{u boundedess in W^1infty}
\|u_\varepsilon(\cdot,t)\|_{L^\infty_{{\rm loc}}(0, \infty;W^{1,\infty}(\Omega))}\leq C_{23}.
\end{align}
\end{lem}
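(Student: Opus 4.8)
The plan is to bootstrap from the $L^p$ estimates of Lemma~\ref{lem:any p boundedness for n varepsilon} (valid for arbitrarily large $p$, uniformly in $\varepsilon$) up to $L^\infty$ bounds for $n_\varepsilon$, $\nabla c_\varepsilon$ and $\nabla u_\varepsilon$, together with global existence via the extensibility criterion~(\ref{principle for global existence of solution}). First I would fix a large exponent, say $p>3$, and apply Lemma~\ref{lem:any p boundedness for n varepsilon} to get $\|n_\varepsilon(\cdot,t)\|_{L^p(\Omega)}\le C$. Feeding this into Lemma~\ref{relationships between boundedness of n varepsilon and Du varepsilon} with $r=\infty$ yields $\|Du_\varepsilon(\cdot,t)\|_{L^\infty(\Omega)}\le C$ uniformly in $t\in(0,T_{max,\varepsilon})$ and $\varepsilon$; by Sobolev embedding this also gives $\|u_\varepsilon(\cdot,t)\|_{L^\infty(\Omega)}\le C$, so (\ref{u boundedess in W^1infty}) is in hand.

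Next I would treat the $c_\varepsilon$ equation. Writing it as $c_{\varepsilon t}=\Delta c_\varepsilon - u_\varepsilon\cdot\nabla c_\varepsilon - n_\varepsilon f(c_\varepsilon)$ and using the variation-of-constants representation with the Neumann heat semigroup $(e^{t\Delta})_{t\ge0}$, I would estimate $\nabla c_\varepsilon$ by the standard smoothing inequality $\|\nabla e^{t\Delta}\varphi\|_{L^\infty(\Omega)}\le C(1+t^{-\frac12-\frac{3}{2q}})e^{-\lambda t}\|\varphi\|_{L^q(\Omega)}$. The drift term $u_\varepsilon\cdot\nabla c_\varepsilon$ is controlled because $u_\varepsilon$ is already in $L^\infty$ and, after a preliminary step, $\nabla c_\varepsilon$ can be bounded in every $L^q(\Omega)$ (this comes directly from Lemma~\ref{lem:summary 1}, which controls $\int_\Omega|\nabla c_\varepsilon|^{2q}$ for every $q>1$ in the admissible range, or alternatively from a first, weaker semigroup iteration using only $\|c_\varepsilon\|_{L^\infty}\le\|c_0\|_{L^\infty}$ from Lemma~\ref{property 1 of n varepsilon and c varepsilon}); the source term $n_\varepsilon f(c_\varepsilon)$ is bounded in $L^p(\Omega)$ with $p>3$ because of the $L^p$ bound on $n_\varepsilon$ and the $L^\infty$ bound on $c_\varepsilon$. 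Choosing $q=p>3$ makes the exponent $\frac12+\frac{3}{2q}<1$, so the time integral converges and yields $\|\nabla c_\varepsilon(\cdot,t)\|_{L^\infty(\Omega)}\le C$, giving (\ref{c boundedess in W^1infty}).

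Then I would close the loop on $n_\varepsilon$ by a Moser–Alikakos type iteration (or, equivalently, the by-now-standard $L^p$–$L^\infty$ bootstrap for porous-medium-type chemotaxis equations as in \cite{MR3426095}): test the first equation of~(\ref{the regularized system}) against $(n_\varepsilon+\varepsilon)^{p_k-1}$ with $p_k=2^k$, absorb the chemotactic term using the now-available $L^\infty$ bound on $\nabla c_\varepsilon$ and the Gagliardo–Nirenberg inequality, absorb the convective term using $\nabla\cdot u_\varepsilon=0$, and iterate $k\to\infty$ to obtain $\|n_\varepsilon(\cdot,t)\|_{L^\infty(\Omega)}\le C_{23}$ uniformly in $\varepsilon$; this is (\ref{n boundedess in L^infty}). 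Finally, the three uniform bounds (\ref{n boundedess in L^infty})–(\ref{u boundedess in W^1infty}) (the last one upgraded to $W^{1,\infty}$ via fractional-power estimates for $A$, since $D(A^\alpha)\hookrightarrow W^{1,\infty}$ for $\alpha\in(\frac34,1)$, together with $\|A^\alpha u_\varepsilon\|_{L^2}\le C$) contradict the blow-up alternative (\ref{principle for global existence of solution}) unless $T_{max,\varepsilon}=\infty$, which proves global existence.

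The main obstacle is the circularity inherent in the estimates: the $L^\infty$ bound on $\nabla c_\varepsilon$ requires an $L^q$ bound on $\nabla c_\varepsilon$ for its drift term, which in turn ultimately rests on the $L^p$ bound for $n_\varepsilon$; and the Moser iteration for $n_\varepsilon$ needs $\nabla c_\varepsilon\in L^\infty$. I expect to resolve this exactly as in \cite{MR3426095}: Lemma~\ref{lem:any p boundedness for n varepsilon} already breaks the circle by delivering $n_\varepsilon\in L^\infty((0,\infty);L^p(\Omega))$ for \emph{every} finite $p$, independently of any bound on $\nabla c_\varepsilon$, so the semigroup argument for $\nabla c_\varepsilon$ and then the Moser iteration for $n_\varepsilon$ can be carried out in sequence without feedback. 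Keeping track of the $\varepsilon$-independence of all constants throughout — which holds because every prior lemma's constants are $\varepsilon$-independent — is the only point requiring care, and the degenerate-diffusion structure (the $\Delta(n_\varepsilon+\varepsilon)^m$ term) causes no trouble since all estimates are performed on $(n_\varepsilon+\varepsilon)$ and the lower bound $\varepsilon>0$ is never used quantitatively.
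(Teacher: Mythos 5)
Your overall bootstrap—starting from the $L^p$ bounds of Lemma~\ref{lem:any p boundedness for n varepsilon}, getting $Du_\varepsilon\in L^\infty$ via Lemma~\ref{relationships between boundedness of n varepsilon and Du varepsilon} with $r=\infty$, then upgrading $n_\varepsilon$ and $c_\varepsilon$ before invoking the extensibility criterion (\ref{principle for global existence of solution})—is sound and reaches the same destination as the paper, but the two middle steps are taken in the opposite order. The paper first bounds $n_\varepsilon$ in $L^\infty$ by a variation-of-constants argument on the \emph{first} equation of (\ref{the regularized system}), controlling the flux $n_\varepsilon S_\varepsilon\nabla c_\varepsilon+n_\varepsilon u_\varepsilon$ in $L^q$ for some $q>3$ using the $L^p$ bounds on $n_\varepsilon$ together with $L^s$ bounds on $\nabla c_\varepsilon$ coming from the ODI of Lemma~\ref{lem:summary 1}; only afterwards does it obtain $c_\varepsilon\in W^{1,\infty}$ from parabolic regularity applied to the second equation. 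You instead first run the semigroup estimate on the \emph{second} equation to get $\nabla c_\varepsilon\in L^\infty$, and then close the $L^\infty$ bound on $n_\varepsilon$ by Moser--Alikakos iteration. Both orders work; the paper's is slicker because it never needs $\nabla c_\varepsilon\in L^\infty$ before the final regularity step, whereas your iteration must absorb $(n_\varepsilon+\varepsilon)^{p+2l-m-3}\lvert\nabla c_\varepsilon\rvert^2$, whose $n_\varepsilon$-exponent exceeds $p$ when $m<2l-3$ (which is admissible here, e.g. $l=3$, $m\in(13/6,3)$), so the iteration is not entirely routine even though it goes through thanks to the uniform $L^q$ control of $n_\varepsilon$. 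One small slip at the end: in three dimensions $D(A^\alpha)\hookrightarrow W^{1,\infty}(\Omega)$ fails for $\alpha\in(\tfrac34,1)$—one gets only $D(A^\alpha)\hookrightarrow L^\infty(\Omega)$—so the $W^{1,\infty}$ bound on $u_\varepsilon$ must rest entirely on Lemma~\ref{relationships between boundedness of n varepsilon and Du varepsilon} with $r=\infty$ (which you already invoked), while the separate $\|A^\alpha u_\varepsilon\|_{L^2}$ bound is needed only to rule out the alternative in~(\ref{principle for global existence of solution}).
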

\begin{proof}
First,the validity of estimate (\ref{n L^p boundedness 2}) for any $p>3$ allows for an application of Lemma \ref{relationships between boundedness of n varepsilon and Du varepsilon} to $r=\infty$ to infer that $Du_\varepsilon$ is bounded in $L^\infty(\Omega\times(0, T_{max,\varepsilon}))$, then we can reach that
\begin{align}\label{u local boundedess in W^1infty}
\|u_{\varepsilon}(\cdot,t)\|_{W^{1,\infty}(\Omega)}\leq C_{24}\ \ \ \ \mathrm{for\ all}\ t\in(0,T_{max,\varepsilon}).
\end{align}
Next, we establish the boundedness of $\|n_\varepsilon\|_{L^\infty(\Omega\times(0,T_{max,\varepsilon}))}$ for all $\varepsilon\in(0,1)$. According to the well-known estimate for the Neumann heat semigroup in $\Omega$, we can invoke the variation-of-constants formula for $n_\varepsilon$ and $\nabla\cdot u_\varepsilon=0$ to find that there exists a constant $C_{25}>0$ fulfilling
\begin{align}\label{eq:boundedness n varepsilon 1}
\|n_\varepsilon(\cdot,t)\|_{L^\infty(\Omega)}&\leq\|e^{t\Delta}n_0\|_{L^\infty(\Omega)}+\int_{0}^{t}\|e^{(t-s)\Delta}\nabla(n_\varepsilon S(x,n_\varepsilon,c_\varepsilon)\nabla c_\varepsilon +n_\varepsilon u_\varepsilon)(\cdot,s)\|_{L^\infty(\Omega)}{\rm d}s\\
&\leq \|n_0\|_{L^\infty(\Omega)}+C_{25}\int_{0}^{t}(t-s)^{(-\frac{1}{2}-\frac{3}{2q})}e^{-\lambda_1(t-s)}\|n_\varepsilon S(x,n_\varepsilon,c_\varepsilon)\nabla c_\varepsilon +n_\varepsilon u_\varepsilon\|_{L^q(\Omega)}{\rm d}s.
\end{align}
Thus, for any given $3<q<r$, we can find that
\begin{align}\label{eq:boundedness n varepsilon 2}
\|n_\varepsilon S(x,n_\varepsilon,c_\varepsilon)\nabla c_\varepsilon\|_{L^q(\Omega)}&\leq C_{0}\|n_{\varepsilon}^{l-1}\nabla
c_\varepsilon\|_{L^q(\Omega)}
\notag\\
&\leq C_0\|n_\varepsilon(\cdot,t)\|_{L^r(l-1)(\Omega)}^{l-1}\|\nabla c_\varepsilon(\cdot,t)\|_{L^{\frac{qr}{r-q}}(\Omega)}\leq C_{26},
\end{align}
and
\begin{align}\label{eq:boundedness n varepsilon 3}
\|n_\varepsilon u_\varepsilon\|_{L^q(\Omega)}\leq\|u_\varepsilon(\cdot,t)\|_{L^\infty(\Omega)}\|n_{\varepsilon}(\cdot,t)\|_{L^q(\Omega)}\leq C_{27}
\end{align}
for all $t\in(0,T_{max,\varepsilon})$ by using the $\mathrm{H\ddot{o}lder}$ inequality, Lemma \ref{lem:any p boundedness for n varepsilon} and the  boundedness result for $u_\varepsilon$. Then, we can infer from (\ref{eq:boundedness n varepsilon 1})-(\ref{eq:boundedness n varepsilon 3})
that
\begin{align}\label{condition 1}
\|n_{\varepsilon}(\cdot,t)\|_{L^\infty(\Omega)}\leq C_{26},\ \ \ \mathrm{for\ all}\ t\in(0,T_{max,\varepsilon})
\end{align}
is true
because $q>3$ ensures that $\int_0^\infty (t-s)^{(-\frac{1}{2}-\frac{3}{2q})}e^{-\lambda_1(t-s)}{\rm d}s$ is finite.

Moreover, it can be derived from an well-known arguments parabolic regularity theory that
\begin{align}\label{condition 2}
\|c_\varepsilon(\cdot,t)\|_{W^{1,\infty}(\Omega)}\leq C_{27}\ \ \ \ \mathrm{for\ all}\ t\in(0,T_{max,\varepsilon})
\end{align}
is satisfied due to (\ref{condition 1}).\\
We now turn to estimate for $A^\alpha u_\varepsilon$. Applying the fractional power $A^\alpha$ ($\alpha$ is given in (\ref{the initial data conditions})) to the variation-of-constants formula
$$u_\varepsilon(\cdot,t)=e^{-tA}u_{0}+\int_{0}^{t}e^{(t-s)A}\mathscr{P}(n_\varepsilon(\cdot,s)\nabla\varphi){\rm d}s,\ \ \ \mathrm{for\ all}\ t\in(0.T_{max,\varepsilon}),$$
we can arrive at
\begin{align}\label{condition 3 for global existence}
\|A^\alpha u_\varepsilon(\cdot,t)\|_{L^2(\Omega)}\leq\|A^\alpha e^{-tA}u_{0}\|_{L^2(\Omega)}+\int_{0}^{t}\|A^\alpha e^{(t-s)A}\mathscr{P}(n_\varepsilon(\cdot,s)\nabla\varphi)\|_{L^2(\Omega)}{\rm d}s
\end{align}
For the first term on the right-hand side, we can easily obtain that
\begin{align}\label{condition 3 for global existence 1}
\|A^\alpha e^{-tA}u_{0}\|_{L^2(\Omega)}=\|e^{-tA}A^\alpha u_{0}\|_{L^2(\Omega)}\leq C_{29}
\end{align}
is valid for some positive constant $C_{18}$ and all $t\in(0,T_{max,\varepsilon})$ due to (\ref{the initial data conditions}).

Since the operator $\mathscr{P}$ is bounded from $L^2(\Omega)$ to $L^2_\sigma(\Omega)$, we can estimate the last term on the right-hand side as follows:
\begin{align}\label{condition 3 for global existence 2}
&\int_{0}^{t}\|A^\alpha e^{(t-s)A}\mathscr{P}(n_\varepsilon(\cdot,s)\nabla\varphi)\|_{L^2(\Omega)}{\rm d}s\notag\\ \leq
&C_{30}\|\varphi\|_{W^{1,\infty}(\Omega)}\|n_\varepsilon(\cdot,t)\|_{L^{\infty}(\Omega)}\int_0^\infty(t-s)^\alpha e^{-\lambda(t-s)}{\rm d}s\leq C_{31}
\end{align}
with some $C_{30}>0$ and $C_{31}>0$.

Substituting (\ref{condition 3 for global existence 1}) and (\ref{condition 3 for global existence 2}) into (\ref{condition 3 for global existence}),
we can conclude that there exists a $C_{21}>0$ such that
\begin{align}\label{condition 3}
\|A^\alpha u_\varepsilon(\cdot,t)\|_{L^{2}(\Omega)}\leq C_{32},\ \ \ \mathrm{for\ all}\ t\in(0,T_{max,\varepsilon}).
\end{align}
Then, combining (\ref{condition 1}), (\ref{condition 2}) and (\ref{condition 3}), we thus can claim that $T_{max,\varepsilon}=\infty$ and that the classical solution $(n_\varepsilon, c_\varepsilon, u_\varepsilon)$ is global in time. Therefore, we can futheremore find that the solution to the system (\ref{the regularized system}) satisfies (\ref{n boundedess in L^infty})-(\ref{u boundedess in W^1infty}) by arguing similarly as above.
\end{proof}
\vskip 3mm
As one further class of a priori estimates, let us finally also note straightforward consequence of Lemma \ref{lem:n,c,u boundedness} for uniform $\mathrm{H\ddot{o}lder}$ regularity properties of $c_\varepsilon$, $\nabla c_\varepsilon$ and $u_\varepsilon$.
\begin{lem}
Let $m$ be denoted as the above lemma. Then there exists $\theta\in(0,1)$ such that for some $C>0$ we have
\begin{align}\label{holder regularity for c varepsilon}
\|c_\varepsilon\|_{C^{\theta,\frac{\theta}{2}}(\overline(\Omega)\times[t,t+1])}\leq C\ \ \ \mathrm{for\ all}\ t\geq 0,
\end{align}
and
\begin{align}\label{holder regularity for nabla c varepsilon}
\|\nabla c_\varepsilon\|_{C^{\theta,\frac{\theta}{2}}(\overline(\Omega)\times[t,t+1])}\leq C(\tau)\ \ \ \mathrm{for\ all}\ t\geq\tau,
\end{align}
as well as
\begin{align}\label{holder regularity for u varepsilon}
\|u_\varepsilon\|_{C^{\theta,\frac{\theta}{2}}(\overline(\Omega)\times[t,t+1])}\leq C\ \ \ \mathrm{for\ all}\ t\geq 0,
\end{align}
with any $\tau\in(0,\infty)$.
\end{lem}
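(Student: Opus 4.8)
The plan is to read off all three estimates from the $\varepsilon$-independent bounds already recorded in Lemma~\ref{lem:n,c,u boundedness} — the uniform $L^\infty$-bound for $n_\varepsilon$, the uniform $W^{1,\infty}$-bounds for $c_\varepsilon$ and $u_\varepsilon$, and the bound (\ref{condition 3}) for $\|A^\alpha u_\varepsilon\|_{L^2(\Omega)}$ — by feeding them into classical parabolic and Stokes regularity theory. Throughout, the one point requiring attention is bookkeeping: every constant produced below will depend on the data only through those four quantities, and since these are bounded uniformly in $\varepsilon$ (and in $t$), the resulting H\"older estimates will automatically be uniform as well.

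For (\ref{holder regularity for c varepsilon}) I would rewrite the second equation of (\ref{the regularized system}) as the scalar linear parabolic problem $c_{\varepsilon t}-\Delta c_\varepsilon=h_\varepsilon$ in $\Omega\times(0,\infty)$ with homogeneous Neumann data, where $h_\varepsilon:=-u_\varepsilon\cdot\nabla c_\varepsilon-n_\varepsilon f(c_\varepsilon)$. By Lemma~\ref{lem:n,c,u boundedness} together with (\ref{L^infty boundedness of c varepsilon}) and (\ref{f condition}), the forcing term $h_\varepsilon$ is bounded in $L^\infty(\Omega\times(0,\infty))$ uniformly in $\varepsilon$, while $c_\varepsilon$ itself is bounded by $\|c_0\|_{L^\infty(\Omega)}$. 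Since $\Omega$ is smooth, the standard interior-in-time H\"older estimates for such equations (in the spirit of the Ladyzhenskaya--Solonnikov--Ural'tseva theory) then yield some $\theta\in(0,1)$ and $C>0$, both independent of $\varepsilon$ and of $t\ge 0$, for which (\ref{holder regularity for c varepsilon}) holds.

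For (\ref{holder regularity for nabla c varepsilon}) I would bootstrap once more. Knowing now that $c_\varepsilon$ is uniformly H\"older continuous on each slab $\overline\Omega\times[t,t+1]$, the forcing $h_\varepsilon$ is in particular bounded in $L^p(\Omega\times(t,t+1))$ for every finite $p$, uniformly in $\varepsilon$. To remove the dependence on the merely $W^{1,\infty}$-regular datum $c_0$ I would localize in time by a cutoff $\chi=\chi(s)$ vanishing near $s=t-\tfrac{\tau}{2}$ and equal to $1$ on $[t,t+1]$, and apply maximal $L^p$-regularity for the Neumann heat semigroup to $\chi c_\varepsilon$; this controls $c_\varepsilon$ in the parabolic Sobolev space $W_p^{2,1}(\Omega\times(t,t+1))$ uniformly in $\varepsilon$, provided $t\ge\tau$. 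Choosing $p>5=\dim\Omega+2$ and invoking the parabolic embedding $W_p^{2,1}(\Omega\times(t,t+1))\hookrightarrow C^{1+\mu,\frac{1+\mu}{2}}(\overline\Omega\times[t,t+1])$ for some $\mu>0$ then gives the claimed H\"older bound for $\nabla c_\varepsilon$, after relabelling $\theta$.

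Finally, for (\ref{holder regularity for u varepsilon}) I would work with the representation $u_\varepsilon(t)=e^{-tA}u_0+\int_0^t e^{-(t-s)A}\mathscr{P}(n_\varepsilon\nabla\phi)(s)\,\mathrm{d}s$, noting that $n_\varepsilon\nabla\phi$ is bounded in $L^\infty(\Omega;\mathbb{R}^3)$ uniformly in $\varepsilon$ by Lemma~\ref{lem:n,c,u boundedness} and (\ref{phi condition}). Spatial H\"older regularity then comes from (\ref{condition 3}) and the embedding $D(A^\alpha)\hookrightarrow C^{\nu}(\overline\Omega)$, valid for $0<\nu<2\alpha-\tfrac{3}{2}$ and hence for $\alpha\in(\tfrac{3}{4},1)$. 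H\"older continuity in time I would extract from $u_\varepsilon(t+s)-u_\varepsilon(t)=(e^{-sA}-\mathrm{id})u_\varepsilon(t)+\int_t^{t+s}e^{-(t+s-\sigma)A}\mathscr{P}(n_\varepsilon\nabla\phi)\,\mathrm{d}\sigma$ via the analytic-semigroup smoothing estimates $\|(e^{-sA}-\mathrm{id})A^{-\gamma}\|\le Cs^{\gamma}$ and $\|A^\alpha e^{-\sigma A}\|\le C\sigma^{-\alpha}$ combined again with (\ref{condition 3}); alternatively, maximal $L^p$-regularity for the Stokes operator bounds $u_\varepsilon$ in $W_p^{2,1}$ of the space-time cylinder and one concludes exactly as for $\nabla c_\varepsilon$. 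The main (and essentially only) obstacle here is the time-localization step for $\nabla c_\varepsilon$ needed to decouple the estimate from $c_0$, which is precisely why (\ref{holder regularity for nabla c varepsilon}) is stated only for $t\ge\tau$; everything else is a direct application of standard theory once the uniform bounds of Lemma~\ref{lem:n,c,u boundedness} are in hand.
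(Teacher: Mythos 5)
Your proposal is correct and follows essentially the same route the paper intends: the paper's ``proof'' simply defers to Lemmas~3.18--3.19 of \cite{MR3426095}, which rest exactly on the ingredients you use — uniform $L^\infty$-/$W^{1,\infty}$-bounds from Lemma~\ref{lem:n,c,u boundedness}, scalar parabolic H\"older theory with $L^\infty$ forcing for (\ref{holder regularity for c varepsilon}), a time-localized maximal $L^p$/parabolic embedding bootstrap for (\ref{holder regularity for nabla c varepsilon}) (with the cutoff correctly explaining the restriction $t\geq\tau$), and the Stokes semigroup smoothing estimates plus the embedding $D(A^\alpha)\hookrightarrow C^\nu(\overline\Omega)$ for $\alpha\in(\tfrac34,1)$ for (\ref{holder regularity for u varepsilon}). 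In effect you have reconstructed the omitted argument; no gaps.
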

\begin{proof}
The proof of this lemma is  on the standard parabolic regularity theory and some standard semigroup estimation techniques, which is omitted here. For the details, we can refer to the lemma 3.18 and lemma 3.19 in \cite{MR3426095}.
\end{proof}
\vskip 3mm
\section{proof of the theorem \ref{thm:the main theorem}}
In this section, we use the classical solution of the regularized system (\ref{the regularized system}) to approximate the weak solution we defined in Definition \ref{definition:definition of weak solution} above. At first, several necessary boundedness results are established in the following lemmas.
\vskip 2mm
\begin{lem}\label{lem:several boundedness estimates}
Let $m>m^\star(l)$ with $m^\star(l)$ given in (\ref{definition of m star}). Then, for each $\varepsilon\in(0,1)$, the global classical solution $(n_\varepsilon, c_\varepsilon, u_\varepsilon, p_\varepsilon)$  satisfies the following inequalities
\begin{align}\label{nf(c) bounds}
\int_0^\infty\int_\Omega n_\varepsilon f(c_\varepsilon){\rm d}x{\rm d}t\leq \int_\Omega c_0{\rm d}x,
\end{align}
and
\begin{align}\label{nabla c in L^2 bounds}
\int_0^\infty\int_\Omega|\nabla c_\varepsilon|^2{\rm d}x\leq\frac{1}{2}\int_\Omega c_0^2{\rm d}x,
\end{align}
as well as
\begin{align}\label{nabla n eta in L^2}
\int_0^\infty\int_\Omega|\nabla(n_\varepsilon+\varepsilon)^\eta|^2{\rm d}x{\rm d}\leq M_1,\ \ \ \ \mathrm{for\ any}\ \eta>\frac{m}{2}.
\end{align}
\end{lem}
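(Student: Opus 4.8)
The plan is to derive all three inequalities from testing the individual equations of the regularized system \eqref{the regularized system} with suitable functions and then integrating the resulting differential inequalities in time over $(0,\infty)$, exploiting the nonnegativity of $f$, $n_\varepsilon$ and the already established uniform bounds of Lemma \ref{lem:n,c,u boundedness}. First I would prove \eqref{nf(c) bounds}: integrating the second equation in \eqref{the regularized system} over $\Omega$ and using $\nabla\cdot u_\varepsilon=0$ together with the no-flux boundary condition gives
\begin{align*}
\frac{{\rm d}}{{\rm d}t}\int_\Omega c_\varepsilon{\rm d}x=-\int_\Omega n_\varepsilon f(c_\varepsilon){\rm d}x\le 0,
\end{align*}
so that $t\mapsto\int_\Omega c_\varepsilon$ is nonincreasing and nonnegative; integrating in time on $(0,\infty)$ and dropping the nonnegative endpoint term $\int_\Omega c_\varepsilon(\cdot,t)$ yields $\int_0^\infty\int_\Omega n_\varepsilon f(c_\varepsilon)\le\int_\Omega c_0$.

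Next I would prove \eqref{nabla c in L^2 bounds} by testing the second equation with $c_\varepsilon$ itself: after integration by parts and again using $\nabla\cdot u_\varepsilon=0$ (which kills the convective term $\int_\Omega (u_\varepsilon\cdot\nabla c_\varepsilon)c_\varepsilon=\frac12\int_\Omega u_\varepsilon\cdot\nabla c_\varepsilon^2=0$) one gets
\begin{align*}
\frac12\frac{{\rm d}}{{\rm d}t}\int_\Omega c_\varepsilon^2{\rm d}x+\int_\Omega|\nabla c_\varepsilon|^2{\rm d}x=-\int_\Omega n_\varepsilon f(c_\varepsilon)c_\varepsilon{\rm d}x\le 0,
\end{align*}
since $n_\varepsilon\ge0$, $f\ge0$, $c_\varepsilon\ge0$. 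Integrating over $(0,\infty)$ and discarding the nonnegative term $\frac12\int_\Omega c_\varepsilon^2(\cdot,t)$ gives \eqref{nabla c in L^2 bounds}.

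For \eqref{nabla n eta in L^2} I would use Lemma \ref{boundedness of n varepsilon in L^p} with a suitable choice of $p$ (namely $p=2\eta-m+1>1$, which is admissible precisely because $\eta>\frac m2$, so that $\frac{m+p-1}{2}=\eta$) to obtain
\begin{align*}
\frac1p\frac{{\rm d}}{{\rm d}t}\int_\Omega(n_\varepsilon+\varepsilon)^p{\rm d}x+\frac{2m(p-1)}{(m+p-1)^2}\int_\Omega|\nabla(n_\varepsilon+\varepsilon)^\eta|^2{\rm d}x\le\frac{C_0^2(p-1)}{2m}\int_\Omega(n_\varepsilon+\varepsilon)^{p+2l-m-3}|\nabla c_\varepsilon|^2{\rm d}x.
\end{align*}
The right-hand side is controlled using the $L^\infty$ bounds on $n_\varepsilon$ and $\nabla c_\varepsilon$ from Lemma \ref{lem:n,c,u boundedness} by $C\int_\Omega|\nabla c_\varepsilon|^2{\rm d}x$, whose time integral is finite by \eqref{nabla c in L^2 bounds}; then integrating in time over $(0,\infty)$ and discarding the nonnegative term $\frac1p\int_\Omega(n_\varepsilon+\varepsilon)^p(\cdot,t)$ bounds $\int_0^\infty\int_\Omega|\nabla(n_\varepsilon+\varepsilon)^\eta|^2$ by a constant $M_1$ independent of $\varepsilon$. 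The main (though mild) obstacle is simply bookkeeping the $\varepsilon$-independence: one must verify that all constants entering — in particular $C_0$, the $L^\infty$ bound $C_{23}$ on $n_\varepsilon$ and $\nabla c_\varepsilon$, and the constant in \eqref{nabla c in L^2 bounds} — do not degenerate as $\varepsilon\searrow0$, which is guaranteed since Lemma \ref{lem:n,c,u boundedness} and Lemma \ref{property 1 of n varepsilon and c varepsilon} already provide such uniform bounds and $\int_\Omega c_0^2$ is a fixed finite quantity.
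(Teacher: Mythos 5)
Your proposal is correct and follows essentially the same route as the paper: the first two bounds come from testing the signal equation by $1$ and by $c_\varepsilon$ and integrating in time, and the third from combining the differential inequality of Lemma \ref{boundedness of n varepsilon in L^p} with the uniform $L^\infty$ bound on $n_\varepsilon$ from Lemma \ref{lem:n,c,u boundedness} and the spacetime $L^2$ bound on $\nabla c_\varepsilon$ just established. Your substitution $p=2\eta-m+1$ and the paper's choice $\eta=\frac{m+p-1}{2}$ for $p>1$ are the same parametrization read in opposite directions.
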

\begin{proof}
The inequality (\ref{nf(c) bounds})-(\ref{nabla c in L^2 bounds}) can be obtained straightforwardly just by testing the second equation in (\ref{the regularized system}) with $1$ and $c_\varepsilon$ over $\Omega$ repectively, and using the nonnegativity of $n_\varepsilon$, $\nabla\cdot u_\varepsilon=0$.

We now establish the inequality (\ref{nabla n eta in L^2}). According to computation of the Lemma \ref{boundedness of n varepsilon in L^p} and the boundedness of $n_\varepsilon$ (\ref{n boundedess in L^infty}), we can obtain that
\begin{align*}
\frac{1}{p}\frac{{\rm d}}{{\rm d}t}\int_{\Omega}(n_\varepsilon+\varepsilon)^p{\rm d}x&+\frac{2m(p-1)}{(m+p-1)^2}\int_{\Omega}|\nabla (n_\varepsilon+\varepsilon)^{\frac{m+p-1}{2}}|^2{\rm d}x\\
&\leq\frac{C_{0}^{2}(p-1)}{2m}(\|n_\varepsilon\|_{L^{\infty}(\Omega)}+1)^{p+2l-m-3}\int_\Omega|\nabla c_{\varepsilon}|^{2}{\rm d}x
\end{align*}
for $p>1$. Then, an integration over $(0,t)$ yields that
\begin{align*}
\frac{1}{p}\int_{\Omega}(n_\varepsilon+\varepsilon)^p{\rm d}x+&\frac{2m(p-1)}{(m+p-1)^2}\int_0^t\int_{\Omega}|\nabla (n_\varepsilon+\varepsilon)^{\frac{m+p-1}{2}}|^2{\rm d}x\\
\leq&\frac{C_{0}^{2}(p-1)}{2m}(\|n_\varepsilon\|_{L^{\infty}(\Omega)}+1)^{p+2l-m-3}\int_0^t\int_\Omega|\nabla c_{\varepsilon}|^{2}{\rm d}x\\
&+\frac{1}{p}\int_\Omega(n_0+1)^p{\rm d}x\\
\leq& M_1
\end{align*}
for each $t\in(0,\infty)$.

For any $p>1$, we can infer from (\ref{nabla c in L^2 bounds}) that the (\ref{nabla n eta in L^2}) is satisfied by setting
$$\eta=\frac{m+p-1}{2},$$
which furthermore shows
$$\eta>\frac{m}{2}.$$
In particularly, we can obtain the boundedness of $\|\nabla(n_\varepsilon+\varepsilon)^m\|_{L^2_{\rm loc}(0,\infty; L^2(\Omega))}$ by choosing $\eta=m$.
\end{proof}
\vskip 3mm
\begin{lem}\label{lem:regularity 1 for partial n t}
Let $\varepsilon\in(0,1)$, $T\in(0,\infty)$ and $m$ be given as above lemmas. Suppose that $(n_\varepsilon, c_\varepsilon, u_\varepsilon, P_\varepsilon)$ is a classical solution to the regularized system (\ref{the regularized system}) on $[0,T)$. There exists a $\varepsilon$ independent constant $M_1(T)>0$ such that
\begin{align}\label{regularity 1 for partial n t}
\left\|\frac{\partial}{\partial t}(n_\varepsilon+\varepsilon)^\gamma\right\|_{L^1((0,T),(W^{3,2}_{0}(\Omega))^\star)}\leq M_2
\end{align}
for all $\gamma>\{1,\frac{m}{2}-l+2\}$.
\end{lem}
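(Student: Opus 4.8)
The plan is to test the first equation in the regularized system against a suitable test function $\psi\in W^{3,2}_0(\Omega)$ and bound each resulting term by a $\varepsilon$-independent quantity that is integrable in time over $(0,T)$. First I would write, for the weak time derivative of $(n_\varepsilon+\varepsilon)^\gamma$,
\begin{align*}
\int_\Omega \partial_t(n_\varepsilon+\varepsilon)^\gamma\,\psi\,{\rm d}x
=\gamma\int_\Omega (n_\varepsilon+\varepsilon)^{\gamma-1}\,n_{\varepsilon t}\,\psi\,{\rm d}x,
\end{align*}
and then substitute $n_{\varepsilon t}=\Delta(n_\varepsilon+\varepsilon)^m-\nabla\cdot(n_\varepsilon S_\varepsilon\cdot\nabla c_\varepsilon)-u_\varepsilon\cdot\nabla n_\varepsilon$ from the first PDE. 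After integrating by parts once (legitimate since $\nabla n_\varepsilon\cdot\nu=0$, $u_\varepsilon=0$ and $S_\varepsilon$ vanishes on $\partial\Omega$, and $\nabla\cdot u_\varepsilon=0$), one moves the derivatives that hit $(n_\varepsilon+\varepsilon)^{\gamma-1}\psi$ onto that factor; this produces terms containing $\nabla(n_\varepsilon+\varepsilon)^m$, $\nabla(n_\varepsilon+\varepsilon)^{\gamma-1}$, $n_\varepsilon S_\varepsilon\nabla c_\varepsilon$, and $n_\varepsilon u_\varepsilon$, each multiplied by $\psi$ or $\nabla\psi$, together with lower-order products. The choice $\gamma>\max\{1,\frac m2-l+2\}$ is exactly what is needed so that all powers of $n_\varepsilon$ appearing can be controlled; in particular $\gamma>\frac m2-l+2$ makes $p:=2(\gamma-1)+m-1>m-2l+3$, so the exponent $p+2l-m-3$ in Lemma \ref{boundedness of n varepsilon in L^p} is $\geq 0$ and one can convert $n_\varepsilon$-powers times $|\nabla c_\varepsilon|$ into the gradient quantity $|\nabla(n_\varepsilon+\varepsilon)^{(m+p-1)/2}|$.

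Next I would estimate each term. Using the embedding $W^{3,2}_0(\Omega)\hookrightarrow W^{1,\infty}(\Omega)$ (valid since $\Omega\subset\mathbb R^3$), one has $\|\psi\|_{W^{1,\infty}(\Omega)}\leq C\|\psi\|_{W^{3,2}_0(\Omega)}$, so it suffices to bound the $L^1_x$-norms of the coefficients. The diffusion term yields $\int_\Omega (n_\varepsilon+\varepsilon)^{\gamma-1}\nabla(n_\varepsilon+\varepsilon)^m\cdot(\cdots)$, which after the pointwise identity $(n_\varepsilon+\varepsilon)^{\gamma-1}\nabla(n_\varepsilon+\varepsilon)^m=c\,\nabla(n_\varepsilon+\varepsilon)^{m+\gamma-1}$ is $L^1$ in space, uniformly bounded in $L^1(0,T)$ by Lemma \ref{lem:several boundedness estimates} (inequality \eqref{nabla n eta in L^2} with $\eta=m+\gamma-1>\frac m2$) together with the $L^\infty$ bound \eqref{n boundedess in L^infty}. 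The chemotactic term is handled by $|n_\varepsilon S_\varepsilon\nabla c_\varepsilon|\leq C_0 n_\varepsilon^{l-1}|\nabla c_\varepsilon|$; using \eqref{n boundedess in L^infty} and Cauchy–Schwarz this is bounded in $L^1_x$ by $C\|\nabla c_\varepsilon\|_{L^2(\Omega)}$, which lies in $L^2(0,T)\subset L^1(0,T)$ by \eqref{nabla c in L^2 bounds}. The transport term $n_\varepsilon u_\varepsilon$ is bounded in $L^\infty_x$ hence in $L^1_x$ by \eqref{n boundedess in L^infty} and \eqref{u boundedess in W^1infty}, and the remaining lower-order products (those arising from $\nabla(n_\varepsilon+\varepsilon)^{\gamma-1}$ paired with the chemotactic and transport fluxes, and from $\Delta$ of $\psi$) are estimated the same way, again using \eqref{n boundedess in L^infty}, \eqref{c boundedess in W^1infty}, \eqref{u boundedess in W^1infty} and \eqref{nabla n eta in L^2}. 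Taking the supremum over $\psi$ with $\|\psi\|_{W^{3,2}_0(\Omega)}\leq 1$ and integrating in $t$ over $(0,T)$ gives \eqref{regularity 1 for partial n t} with $M_2=M_2(T)$ independent of $\varepsilon$.

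The main obstacle — really the only delicate point — is the bookkeeping of powers of $n_\varepsilon$ after the two integrations by parts: one must check that the factor $(n_\varepsilon+\varepsilon)^{\gamma-1}$ combined with each flux produces either a perfect gradient $\nabla(n_\varepsilon+\varepsilon)^{\text{some exponent}>m/2}$ controllable by \eqref{nabla n eta in L^2}, or a bounded power of $n_\varepsilon$ controllable by \eqref{n boundedess in L^infty}; this is precisely where the hypothesis $\gamma>\max\{1,\frac m2-l+2\}$ enters, the condition $\gamma>1$ guaranteeing that the chain rule produces no singular factor at $n_\varepsilon=0$ and that $(n_\varepsilon+\varepsilon)^{\gamma-1}$ is itself bounded. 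Once these exponent constraints are verified, everything else is a routine application of Hölder's inequality and the a priori bounds already established, so the proof is short.
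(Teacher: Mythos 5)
Your proposal follows essentially the same route as the paper's proof: test against $\psi$, substitute the first equation, integrate by parts, use the embedding $W^{3,2}_0(\Omega)\hookrightarrow W^{1,\infty}(\Omega)$, and control the three resulting fluxes via the $L^\infty$ bounds of Lemma \ref{lem:n,c,u boundedness} together with \eqref{nabla n eta in L^2}. One correction: your intermediate claim that $\gamma>\frac m2-l+2$ is what makes $p+2l-m-3\geq0$ in Lemma \ref{boundedness of n varepsilon in L^p} is both arithmetically off (with $p=2\gamma+m-3$ that inequality only requires $\gamma\geq 3-l$, which is automatic from $\gamma>1$ since $l>2$) and irrelevant to this lemma; the actual role of the condition, which you do state correctly in your final paragraph, is to ensure $l+\gamma-2>\frac m2$ so that the term $\nabla(n_\varepsilon+\varepsilon)^{l+\gamma-2}\cdot\nabla c_\varepsilon\,\psi$ produced by the chemotactic flux is covered by \eqref{nabla n eta in L^2}, just as $\gamma>1$ ensures $\frac{m+\gamma-1}{2}>\frac m2$ for the diffusive square-gradient term.
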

\begin{proof}
On differentiation and integration by parts in (\ref{the regularized system}), we see that for each $\varphi\in C^{\infty}_{0}(\Omega)$ we have
\begin{align}\label{estimate 1 for partial n t}
\frac{1}{\gamma}\int_\Omega(\frac{\partial}{\partial t}(n_\varepsilon+\varepsilon)^\gamma)\varphi{\rm d}x=&-\int_\Omega\nabla(n_\varepsilon+\varepsilon)^m\cdot\nabla((n_\varepsilon+\varepsilon)^{\gamma-1}\varphi){\rm d}x-\int_\Omega (u_\varepsilon\cdot\nabla n_\varepsilon)\cdot((n_\varepsilon+\varepsilon)^{\gamma-1}\varphi){\rm d}x\notag\\
&+\int_\Omega(n_\varepsilon S_\varepsilon(x,n_\varepsilon,c_\varepsilon)\nabla c_\varepsilon)\cdot\nabla((n_\varepsilon+\varepsilon)^{\gamma-1}\varphi){\rm d}x\notag\\
=& I_1+I_2+I_3,\ \ \ \mathrm{for\ all}\ t\in(0,T),
\end{align}
In order to estimate the $\{I_i\}_{i=1,2,3}$ above, we apply the $\mathrm{H\ddot{o}lder}$ inequality and Cauchy inequality to obtain that
\begin{align}\label{estimate 11 for partial n t}
I_1=&-m(\gamma-1)\int_\Omega(n_\varepsilon+\varepsilon)^{m+\gamma-3}|\nabla n_\varepsilon|^2\varphi{\rm d}x-m\int_\Omega(n_\varepsilon+\varepsilon)^{m+\gamma-2}\nabla n_\varepsilon\cdot\nabla\varphi{\rm d}x\notag\\
=&-\frac{4m(\gamma-1)}{(m+r-1)^2}\int_\Omega|\nabla(n_\varepsilon+\varepsilon)^{\frac{m+\gamma-1}{2}}|^2\varphi{\rm d}x-\frac{m}{m+\gamma-1}\int_\Omega\nabla(n_\varepsilon+\varepsilon)^{m+\gamma-1}\cdot\nabla\varphi{\rm d}x\notag\\
\leq&\frac{4m(\gamma-1)}{(m+r-1)^2}\|\varphi\|_{L^\infty(\Omega)}\int_\Omega|\nabla(n_\varepsilon+\varepsilon)^{\frac{m+\gamma-1}{2}}|^2{\rm d}x+\frac{m}{m+\gamma-1}\int_\Omega|\nabla(n_\varepsilon+\varepsilon)^{m+\gamma-1}\cdot\nabla\varphi|{\rm d}x\notag\\
\leq&\frac{4m(\gamma-1)}{(m+r-1)^2}\|\varphi\|_{L^\infty(\Omega)}\int_\Omega|\nabla(n_\varepsilon+\varepsilon)^{\frac{m+\gamma-1}{2}}|^2{\rm d}x+\|\nabla(n_\varepsilon+\varepsilon)^{m+\gamma-1}\|_{L^2(\Omega)}\|\nabla\varphi\|_{L^2(\Omega)}
\end{align}
and
\begin{align}\label{estimate 12 for partial n t}
I_2&=-\frac{1}{\gamma}\int_\Omega(u_\varepsilon\cdot\nabla(n_\varepsilon+\varepsilon)^\gamma)\cdot\varphi{\rm d}x=-\frac{1}{\gamma}\int_\Omega(n_\varepsilon+\varepsilon)^\gamma u_\varepsilon\cdot\nabla\varphi{\rm d}x\notag\\
&\leq\frac{1}{\gamma}(\|n_\varepsilon\|_{L^\infty(\Omega)}+1)^{\gamma}\|u_\varepsilon\|_{L^\infty(\Omega)}\|\nabla\varphi\|_{L^\infty(\Omega)}
\end{align}
as well as
\begin{align}\label{estimate 13 for partial n t}
I_{3}=&\int_\Omega\varphi(n_\varepsilon S_\varepsilon(x,n_\varepsilon,c_\varepsilon)\nabla c_\varepsilon)\cdot\nabla(n_\varepsilon+\varepsilon)^{\gamma-1}{\rm d}x+\int_\Omega(n_\varepsilon+\varepsilon)^{\gamma-1}(n_\varepsilon S_\varepsilon(x,n_\varepsilon,c_\varepsilon)\nabla c_\varepsilon)\cdot\nabla\varphi{\rm d}x\notag\\
\leq&C_0\|\nabla c_\varepsilon\|_{L^\infty(\Omega)}\int_\Omega|(n_\varepsilon+\varepsilon)^{l-1}\nabla(n_\varepsilon+\varepsilon)^{\gamma-1}\varphi|{\rm d}x+C_0\int_{\Omega}(n_\varepsilon+\varepsilon)^{l+\gamma-2}|\nabla c_\varepsilon\cdot\varphi|{\rm d}x\notag\\
\leq&\frac{(\gamma-1)C_0}{l+\gamma-1}\|\nabla c_\varepsilon\|_{L^\infty(\Omega)}\int_\Omega|\nabla(n_\varepsilon+\varepsilon)^{l+\gamma-2}||\varphi|{\rm d}x++C_0\int_{\Omega}(n_\varepsilon+\varepsilon)^{l+\gamma-2}|\nabla c_\varepsilon\cdot\varphi|{\rm d}x\notag\\
\leq& C_0\|\nabla c_\varepsilon\|_{L^\infty(\Omega)}\|\nabla(n_\varepsilon+\varepsilon)^{l+\gamma-2}\|_{L^2(\Omega)}\|\varphi\|_{L^2(\Omega)}\notag\\
&+C_0|\Omega|(\|n_\varepsilon\|_{L^\infty(\Omega)}+1)^{l+\gamma-2}\|\nabla c_\varepsilon\|_{L^\infty(\Omega)}\|\nabla\varphi\|_{L^\infty(\Omega)}.
\end{align}
Combining (\ref{estimate 11 for partial n t})-(\ref{estimate 13 for partial n t}) with (\ref{estimate 1 for partial n t} together, we can see that there exists some constant $C_{22}>0$ satisfying
\begin{align*}
\left|\int_\Omega(\frac{\partial}{\partial t}(n_\varepsilon+\varepsilon)^\gamma)\varphi{\rm d}x\right|
\leq&C_{33}\left( \|\nabla(n_\varepsilon+\varepsilon)^\frac{m+\gamma-1}{2}\|_{L^2(\Omega)}^{2}+\|\nabla(n_\varepsilon+\varepsilon)^{m+\gamma-1}\|_{L^2(\Omega)}\right.\\ &\left.+\|\nabla(n_\varepsilon+\varepsilon)^{l+\gamma-2}\|_{L^2(\Omega)}+1\right)\|\varphi\|_{W^{1,\infty}(\Omega)}.
\end{align*}
According to the Lemma \ref{lem:several boundedness estimates} and the embedding $W^{3,2}(\Omega)\hookrightarrow W^{1,\infty}(\Omega)$, we can claim that (\ref{regularity 1 for partial n t}) is true when the following relation is fulfilled:
\begin{eqnarray}
\left\{\begin{array}{lll}
  \medskip
  m+\gamma-1>\frac{m+\gamma-1}{2}\geq\eta,\label{eq:relation 1}\\
  \medskip
l+\gamma-2\geq\eta,\label{eq:relation 2}
\end{array}\right.
\end{eqnarray}
that is
\begin{align}\label{condition of gamma}
\gamma>\max\{1,\frac{m}{2}-l+2\}.
\end{align}
The proof of this lemma is finished.
\end{proof}
\vskip 3mm
\begin{lem}
Let $m$ be given as above lemmas, the there exists $C>0$ such that
\begin{align}\label{regularity 2 for partial n t}
\|\partial_tn_\varepsilon(\cdot,t)\|_{(W^{2,2}_{0}(\Omega))^\star}\leq C\ \ \ \ \mathrm{for\ all}\ t>0\ \mathrm{and}\ \varepsilon\in(0,1).
\end{align}
In particular,
\begin{align}\label{regularity 3 for partial n t}
\|n_\varepsilon(\cdot,t)-n_\varepsilon(\cdot,s)\|_{(W^{2,2}_{0}(\Omega))^\star}\leq C|t-s|\ \ \ \ \mathrm{for\ all}\ t>0\ \mathrm{and}\ \varepsilon\in(0,1).
\end{align}
\end{lem}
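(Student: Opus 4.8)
The idea is to test the first equation of the regularized system (\ref{the regularized system}) against an arbitrary $\varphi\in W^{2,2}_{0}(\Omega)$ and to shift every spatial derivative onto $\varphi$. Writing $u_\varepsilon\cdot\nabla n_\varepsilon=\nabla\cdot(n_\varepsilon u_\varepsilon)$ by virtue of $\nabla\cdot u_\varepsilon=0$, and integrating by parts twice in the porous-medium term and once in the other two terms, we obtain
\begin{align*}
\int_\Omega\partial_t n_\varepsilon\,\varphi\,{\rm d}x=\int_\Omega(n_\varepsilon+\varepsilon)^m\,\Delta\varphi\,{\rm d}x+\int_\Omega\bigl(n_\varepsilon S_\varepsilon(x,n_\varepsilon,c_\varepsilon)\nabla c_\varepsilon\bigr)\cdot\nabla\varphi\,{\rm d}x+\int_\Omega n_\varepsilon u_\varepsilon\cdot\nabla\varphi\,{\rm d}x
\end{align*}
for every $t>0$; all boundary integrals produced by the integrations by parts vanish, since both $\varphi$ and $\nabla\varphi$ vanish on $\partial\Omega$ for $\varphi\in W^{2,2}_{0}(\Omega)$, while in addition $\nabla n_\varepsilon\cdot\nu=0$, $u_\varepsilon=0$ and $S_\varepsilon=0$ on $\partial\Omega$. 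The point that dictates testing against $W^{2,2}_{0}$ rather than against $W^{1,2}_{0}$ — in contrast with Lemma \ref{lem:regularity 1 for partial n t} — is that here we must not expose any factor $\nabla n_\varepsilon$ (which is only controlled in $L^2$, not in $L^\infty$); we therefore keep the full Laplacian acting on the test function and exploit only that $(n_\varepsilon+\varepsilon)^m\in L^\infty$.

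Next I would estimate the three integrals using exclusively the uniform-in-$\varepsilon$ bounds furnished by Lemma \ref{lem:n,c,u boundedness}. Boundedness of $n_\varepsilon$ in $L^\infty(\Omega\times(0,\infty))$ gives $|\int_\Omega(n_\varepsilon+\varepsilon)^m\Delta\varphi|\le C\|\Delta\varphi\|_{L^2(\Omega)}$; the pointwise bound $|S_\varepsilon(x,n_\varepsilon,c_\varepsilon)|\le C_0 n_\varepsilon^{l-2}$, coming from (\ref{main property of S_varepsilon}) and (\ref{L^infty boundedness of c varepsilon}), combined with the $L^\infty$ bounds on $n_\varepsilon$ and on $\nabla c_\varepsilon$, yields $\|n_\varepsilon S_\varepsilon\nabla c_\varepsilon\|_{L^\infty(\Omega)}\le C$ and hence $|\int_\Omega n_\varepsilon S_\varepsilon\nabla c_\varepsilon\cdot\nabla\varphi|\le C\|\nabla\varphi\|_{L^2(\Omega)}$; and the transport term is handled likewise via the $L^\infty$ bounds on $n_\varepsilon$ and $u_\varepsilon$. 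Summing these and invoking $\|\Delta\varphi\|_{L^2(\Omega)}+\|\nabla\varphi\|_{L^2(\Omega)}\le C\|\varphi\|_{W^{2,2}(\Omega)}$, we conclude that $|\int_\Omega\partial_t n_\varepsilon\,\varphi|\le C\|\varphi\|_{W^{2,2}(\Omega)}$ with $C$ independent of $t$ and of $\varepsilon$; passing to the supremum over $\varphi$ in the unit ball of $W^{2,2}_{0}(\Omega)$ establishes (\ref{regularity 2 for partial n t}).

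Finally, (\ref{regularity 3 for partial n t}) is an immediate consequence: since $(n_\varepsilon,c_\varepsilon,u_\varepsilon,P_\varepsilon)$ is a classical solution, for $0\le s<t$ one has $n_\varepsilon(\cdot,t)-n_\varepsilon(\cdot,s)=\int_s^t\partial_\tau n_\varepsilon(\cdot,\tau)\,{\rm d}\tau$, an identity that in particular holds in $(W^{2,2}_{0}(\Omega))^\star$, so that
\begin{align*}
\|n_\varepsilon(\cdot,t)-n_\varepsilon(\cdot,s)\|_{(W^{2,2}_{0}(\Omega))^\star}\le\int_s^t\|\partial_\tau n_\varepsilon(\cdot,\tau)\|_{(W^{2,2}_{0}(\Omega))^\star}\,{\rm d}\tau\le C|t-s|
\end{align*}
by (\ref{regularity 2 for partial n t}). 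I do not expect any genuine obstacle here; the only points requiring care are the bookkeeping of which quantities are known to be bounded uniformly in $\varepsilon$ — namely $\|n_\varepsilon\|_{L^\infty}$, $\|\nabla c_\varepsilon\|_{L^\infty}$ and $\|u_\varepsilon\|_{L^\infty}$, all from Lemma \ref{lem:n,c,u boundedness} — and the decision, forced by the absence of a uniform bound on $\nabla n_\varepsilon$, to integrate by parts twice rather than once in the diffusion term.
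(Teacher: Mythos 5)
Your argument is correct and is essentially the same as the proof in the cited reference (the paper itself delegates the details to Lemma~3.23 of~\cite{MR3426095}). Testing against $\varphi\in W^{2,2}_0(\Omega)$, rewriting $u_\varepsilon\cdot\nabla n_\varepsilon=\nabla\cdot(n_\varepsilon u_\varepsilon)$, integrating by parts twice in the diffusion term and once elsewhere, and then invoking only the uniform $L^\infty$ bounds on $n_\varepsilon$, $\nabla c_\varepsilon$ and $u_\varepsilon$ from Lemma~\ref{lem:n,c,u boundedness} is exactly the intended route; your remark about why one must keep the Laplacian on the test function (so that no uncontrolled factor of $\nabla n_\varepsilon$ appears) correctly identifies why the dual of $W^{2,2}_0(\Omega)$ is the natural space here, and the Lipschitz estimate (\ref{regularity 3 for partial n t}) follows by integrating (\ref{regularity 2 for partial n t}) in time as you say.
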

\begin{proof}
The proof of this lemma can be seen in \cite{MR3426095} lemma 3.23.
\end{proof}
\vskip 3mm
\begin{lem}
Let $m$ be given as in theorem \ref{thm:the main theorem}. Then there exists a subsequence $(\varepsilon_j)_{j\in\mathbb{N}}$ such that $\varepsilon_j\searrow 0$ as $j\rightarrow\infty$ and that
\begin{align}
&n_{\varepsilon_j}\rightarrow n\ \ \ \mathrm{a.e\ in}\ \Omega\times(0,\infty),\label{n a.e convergent}\\
&n_{\varepsilon_j}\stackrel{\ast}{\rightharpoonup} n\ \ \ \mathrm{in}\ L^\infty(\Omega\times(0,\infty)),\label{n weak convergent}\\
&n_{\varepsilon_j}\rightarrow n\ \ \ \mathrm{in}\ C_{\mathrm{loc}}([0,\infty);(W^{2,2}_{0}(\Omega))^\star),\label{n strong convergent}\\
&\nabla(n_{\varepsilon_j}+\varepsilon_j)^m\rightharpoonup \nabla n^m\ \ \ \mathrm{in}\ L^2_{\mathrm{loc}}((0,\infty);L^2(\Omega)),\label{nabla n m weak convergent}\\
&c_{\varepsilon_j}\rightarrow c\ \ \ \mathrm{in}\ C_{\mathrm{loc}}(\overline{\Omega}\times[0,\infty))),\ \ \ \mathrm{a.e\ in}\ \Omega\times(0,\infty)\label{c strong convergent}\\
&\nabla c_{\varepsilon_j}\rightarrow \nabla c\ \ \ \mathrm{in}\ C_{\mathrm{loc}}(\overline{\Omega}\times[0,\infty))),\label{nabla c strong convergent}\\
&\nabla c_{\varepsilon_j}\stackrel{\ast}{\rightharpoonup} \nabla c\ \ \ \mathrm{in}\ L^\infty(\Omega\times(0,\infty)),\label{nabla c weak convergent}\\
&u_{\varepsilon_j}\rightarrow u\ \ \ \mathrm{in}\ C_{\mathrm{loc}}(\overline{\Omega}\times[0,\infty))),\label{u strong convergent}\\
&Du_{\varepsilon_j}\stackrel{\ast}{\rightharpoonup} Du\ \ \ \mathrm{in}\ L^\infty(\Omega\times[0,\infty))),\label{Du strong convergent}
\end{align}
with some triple $(n,c,u)$ which is a global weak solution of (\ref{the main system}) in the sense of Definition \ref{definition:definition of weak solution}. Moreover, $n$ satisfies
\begin{align}\label{n weak topology}
n\in C^{0}_{\omega-\star}([0,\infty);L^\infty(\Omega))
\end{align}
and
\begin{align}\label{mass conversation}
\int_\Omega n(x,t){\rm d}x=\int_\Omega n_0(x){\rm d}x\ \ \ \mathrm{for\ all}\ t>0.
\end{align}
\end{lem}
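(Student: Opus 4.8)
The plan is to extract convergent subsequences from the uniform bounds already established, and then to pass to the limit in the weak formulation of the regularized system \eqref{the regularized system}. First I would collect the $\varepsilon$-independent estimates available: Lemma \ref{lem:n,c,u boundedness} gives uniform bounds for $n_\varepsilon$ in $L^\infty(\Omega\times(0,\infty))$, for $c_\varepsilon$ in $L^\infty_{\mathrm{loc}}(0,\infty;W^{1,\infty}(\Omega))$ and for $u_\varepsilon$ in $L^\infty_{\mathrm{loc}}(0,\infty;W^{1,\infty}(\Omega))$; the H\"older regularity lemma supplies equicontinuity of $c_\varepsilon$, $\nabla c_\varepsilon$ and $u_\varepsilon$ on compact subsets of $\overline\Omega\times[0,\infty)$; Lemma \ref{lem:several boundedness estimates} gives $\nabla(n_\varepsilon+\varepsilon)^m$ bounded in $L^2_{\mathrm{loc}}(0,\infty;L^2(\Omega))$ and hence, with the pointwise bound on $n_\varepsilon$, $\nabla n_\varepsilon^m$ as well; and Lemma \ref{lem:regularity 1 for partial n t} (together with the subsequent lemma) controls $\partial_t(n_\varepsilon+\varepsilon)^\gamma$ and $\partial_t n_\varepsilon$ in suitable dual spaces.

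Next I would invoke the compactness machinery: by the Arzel\`a--Ascoli theorem the H\"older bounds yield \eqref{c strong convergent}, \eqref{nabla c strong convergent} and \eqref{u strong convergent} along a subsequence $\varepsilon_j\searrow0$, with \eqref{nabla c weak convergent} and \eqref{Du strong convergent} following from the $L^\infty$ bounds via weak-$\ast$ compactness; for $n$, an Aubin--Lions type argument applied to $(n_{\varepsilon}+\varepsilon)^\gamma$ (bounded in $L^2_{\mathrm{loc}}(0,\infty;W^{1,2}(\Omega))$ with time derivative bounded in $L^1_{\mathrm{loc}}(0,\infty;(W^{3,2}_0(\Omega))^\star)$) gives strong $L^2_{\mathrm{loc}}$ convergence of $(n_\varepsilon+\varepsilon)^\gamma$, hence \eqref{n a.e convergent} after passing to a further subsequence; the uniform $L^\infty$ bound then upgrades this to \eqref{n weak convergent}, while \eqref{n strong convergent} comes from the Lipschitz-in-time bound \eqref{regularity 3 for partial n t} combined with the compact embedding $L^\infty(\Omega)\hookrightarrow (W^{2,2}_0(\Omega))^\star$ and Arzel\`a--Ascoli in $C_{\mathrm{loc}}([0,\infty);(W^{2,2}_0(\Omega))^\star)$. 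The weak convergence \eqref{nabla n m weak convergent} is immediate from the $L^2_{\mathrm{loc}}$ bound once one identifies the weak limit using \eqref{n a.e convergent}.

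Then I would verify that $(n,c,u)$ solves \eqref{identity 1 in weak sense}--\eqref{identity 3 in weak sense}. For each fixed test function $\varphi\in C_0^\infty([0,\infty)\times\overline\Omega)$ (resp.\ $\zeta\in C_{0,\sigma}^\infty$), the regularized identities hold with $S$ replaced by $S_\varepsilon=\rho_\varepsilon S$ and $n^m$ by $(n_\varepsilon+\varepsilon)^m$; I would pass to the limit term by term. The diffusion term converges by \eqref{nabla n m weak convergent}; the transport terms $n_\varepsilon u_\varepsilon$ and $c_\varepsilon u_\varepsilon$ and $u_\varepsilon\otimes u_\varepsilon$ converge by combining a.e./uniform convergence with dominated convergence using the uniform $L^\infty$ bounds; the forcing term $n_\varepsilon\nabla\phi$ converges by \eqref{n weak convergent}; the source $n_\varepsilon f(c_\varepsilon)$ converges by \eqref{n a.e convergent}, continuity of $f$ and \eqref{c strong convergent} together with dominated convergence. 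The chemotactic term $n_\varepsilon S_\varepsilon(x,n_\varepsilon,c_\varepsilon)\nabla c_\varepsilon$ is the one requiring care: here I would use $\rho_{\varepsilon_j}\to1$ pointwise, the pointwise a.e.\ convergence of $n_{\varepsilon_j}$ and $\nabla c_{\varepsilon_j}$, the continuity of $S$ in its arguments, and the domination $|n_\varepsilon S_\varepsilon\nabla c_\varepsilon|\le C_0\|n_\varepsilon\|_{L^\infty}^{l-1}\|\nabla c_\varepsilon\|_{L^\infty}$ to apply dominated convergence on compact time intervals. Finally, \eqref{n weak topology} follows from \eqref{n strong convergent} plus the uniform $L^\infty$ bound by a standard density argument (weak-$\ast$ continuity is tested against the dense subset $W^{2,2}_0(\Omega)$ of $L^1(\Omega)$-type duals), and \eqref{mass conversation} is inherited from \eqref{mass conservation of n varepsilon} by passing to the limit with $\varphi\equiv1$ localized in time, or directly from \eqref{n weak topology} and \eqref{n strong convergent}.

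The main obstacle I anticipate is the justification of the strong $L^2_{\mathrm{loc}}$ (hence a.e.) convergence of $n_\varepsilon$: one must carefully choose the exponent $\gamma$ so that simultaneously $(n_\varepsilon+\varepsilon)^\gamma$ is bounded in $L^2_{\mathrm{loc}}(0,\infty;W^{1,2}(\Omega))$ (which needs $\gamma>\tfrac m2$ via \eqref{nabla n eta in L^2}) and $\partial_t(n_\varepsilon+\varepsilon)^\gamma$ is bounded in $L^1_{\mathrm{loc}}(0,\infty;(W^{3,2}_0(\Omega))^\star)$ (which needs $\gamma>\max\{1,\tfrac m2-l+2\}$ via Lemma \ref{lem:regularity 1 for partial n t}); both can be met, and then an Aubin--Lions--Simon lemma yields relative compactness in $L^2_{\mathrm{loc}}$, after which one passes to the continuous function $s\mapsto s^{1/\gamma}$ to recover a.e.\ convergence of $n_\varepsilon$ itself. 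Everything else is a fairly routine limit passage once the compactness is in hand.
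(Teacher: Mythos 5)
Your proposal is correct and follows essentially the same route as the paper, which itself omits the details and cites Lemma~4.2 of Winkler (2015) as being provable by the Aubin--Lions lemma, the Arzel\`a--Ascoli theorem, and the preceding boundedness estimates --- precisely the ingredients you assemble. The one point worth emphasizing (which you did identify) is the simultaneous choice of $\gamma$: since $l>2$ one has $\tfrac m2 - l + 2 < \tfrac m2$, so any $\gamma>\max\{1,\tfrac m2\}$ satisfies both the requirement $\gamma>\tfrac m2$ from \eqref{nabla n eta in L^2} and $\gamma>\max\{1,\tfrac m2 - l+2\}$ from Lemma~\ref{lem:regularity 1 for partial n t}, so the Aubin--Lions step goes through as you describe.
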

\begin{proof}
The proof of this lemma is based on the Aubin-Lions lemma, $\mathrm{Arzel\grave{a}}$-Ascoli theorem and the boundedness results obtained in the above lemmas. The proof details of this lemma is almost the same to Lemma 4.2 in \cite{MR3426095} after some small modification, therefore it is omitted here.
\end{proof}
\vskip 3mm
\text{Proof of Theorem \ref{thm:the main theorem}}.
In this part, we shall prove the limits $(n,c,u)$ mention above is a weak solution of problem (\ref{the main system}). As usual, testing the first equation in (\ref{the regularized system}) by $\varphi\in C_0^\infty(\overline{\Omega}\times[0,\infty))$, we can see that
\begin{align}\label{weak solution 1 to n varepsilon}
-\int_0^\infty\int_\Omega n_\varepsilon\varphi_t{\rm d}x{\rm d}t=&\int_\Omega n_0\varphi(\cdot,0){\rm d}x-\int_0^\infty \nabla(n_\varepsilon+\varepsilon)^m\cdot\nabla\varphi{\rm d}x{\rm d}t+\int_0^\infty\int_\Omega n_\varepsilon u_\varepsilon\cdot\nabla\varphi{\rm d}x{\rm d}t\notag\\
+&\int_0^\infty\int_\Omega n_\varepsilon S_\varepsilon(x,n_\varepsilon,c_\varepsilon)\nabla c_\varepsilon\cdot\nabla\varphi{\rm d}x{\rm d}t\ \ \ \mathrm{for\ all}\ \varepsilon\in(0,1).
\end{align}

Thus, by (\ref{n a.e convergent})-(\ref{c strong convergent})£¬(\ref{u strong convergent}) and the definition of matrix-valued function $S_\varepsilon$, the (\ref{identity 1 in weak sense}) can be obtained by passing to the limit in each term of the identity above. Along with a similar procedure applied to the second and the third equation in the system (\ref{the regularized system}), we can also deduce (\ref{identity 2 in weak sense}) and (\ref{identity 3 in weak sense}).

For the boundedness results(\ref{boundedness result}) to the weak solution $(n,c,u)$, we can deduce from (\ref{n boundedess in L^infty})-(\ref{u boundedess in W^1infty}) that
\begin{align*}
\|n\|_{L^\infty((0,T);L^\infty(\Omega))}\leq\liminf_{j\rightarrow\infty}\|n_{\varepsilon_j}\|_{L^\infty((0,T);L^\infty(\Omega))}<\infty,
\end{align*}
and
\begin{align*}
\|c\|_{L^\infty((0,T);W^{1,\infty}(\Omega))}\leq\liminf_{j\rightarrow\infty}\|c_{\varepsilon_j}\|_{L^\infty((0,T);W^{1,\infty}(\Omega))}<\infty,
\end{align*}
as well as
\begin{align*}
\|cu\|_{L^\infty((0,T);W^{1,\infty}(\Omega))}\leq\liminf_{j\rightarrow\infty}\|u_{\varepsilon_j}\|_{L^\infty((0,T);W^{1,\infty}(\Omega))}<\infty
\end{align*}
for all $T<\infty$. This completes the proof the Theorem \ref{thm:the main theorem}.
$\hfill\Box$

\vskip 3mm


\end{document}